\newtheorem{theorem}{Theorem}
\newtheorem{cor}[theorem]{Corollary}
\newtheorem{lemma}[theorem]{Lemma}
\newtheorem{definition}[theorem]{Definition}
\numberwithin{theorem}{section}
\newcommand{\Om} {\Omega}
\newcommand{\pa} {\partial}
\newcommand{\be} {\begin{equation}}
\newcommand{\ee} {\end{equation}}
\newcommand{\bea} {\begin{eqnarray}}
\newcommand{\eea} {\end{eqnarray}}
\newcommand{\Bea} {\begin{eqnarray*}}
\newcommand{\Eea} {\end{eqnarray*}}
\newcommand{\al} {\alpha}
\newcommand{\De} {\Delta}
\newcommand{\la} {\lambda}
\newcommand{\h}{\mathbb{H}^n}
\newcommand{\Dh}{\Delta_{\mathbb{H}^n}}
\newcommand{\nh}{\nabla_{\mathbb{H}^n}}
\newcommand{\norm}[1]{\left\Vert#1\right\Vert}
\def\R{{\mathbb R}}
\def\H{{\mathbb H}}
\newcommand{\no} {\nonumber}
\newcommand{\lab} {\label}
\newcommand{\rar}{\rightarrow}
\numberwithin{equation}{section}
\begin{document}
\title[Singular Adams  inequality for biharmonic operator ]{Singular Adams inequality for biharmonic operator on Heisenberg Group and its applications}
\author[G.Dwivedi,\, J.\,Tyagi ]
{ G. Dwivedi,  J.Tyagi }

\address{G.\,Dwivedi \hfill\break
 Indian Institute of Technology Gandhinagar \newline
 Palaj, Gandhinagar \newline
 Gujarat, India - 382355}
 \email{dwivedi\_gaurav@iitgn.ac.in}

\address{J.\,Tyagi \hfill\break
 Indian Institute of Technology Gandhinagar \newline
 Palaj, Gandhinagar \newline
 Gujarat, India - 382355}
 \email{jtyagi@iitgn.ac.in, jtyagi1@gmail.com}

\subjclass[2010]{Primary 35J91;  Secondary 35B33,35R03.}
\keywords{bi-Laplacian;\,variational methods;\,Singular Adams inequality;\, Heisenberg group.}
\begin{abstract}The goal of this paper is to establish singular Adams type inequality for biharmonic operator on Heisenberg group. As an application, we establish
the existence of a solution to
\begin{equation*}
\Delta_{\H}^2 u=\frac{f(\xi,u)}{\rho(\xi)^a}\,\,\text{ in }\Omega,\,\, u|_{\partial\Omega}=0=\left.\frac{\partial u}{\partial \nu}\right|_{\partial\Omega},
\end{equation*}
where $0\in \Omega \subseteq \mathbb{H}$ is a bounded domain, $0\leq a\leq Q=4.$ The special feature of this problem is that it contains an
exponential nonlinearity and singular potential.
\end{abstract}
\maketitle
\tableofcontents
\section{Introduction}
In this article, we are interested to establish Adams' type inequality for biharmonic operator on Heisenberg group.
We also establish  Adams' type inequality with singular potential. As an application of Adams' type inequality,
we prove the existence of a solution to the following biharmonic equation with Dirichlet's boundary condition on Heisenberg group:
\begin{equation}\label{main_prob}
\begin{array}{cc}
\Delta_{\H}^2 u=\displaystyle\frac{f(\xi,u)}{\rho(\xi)^a} \,\,\,\,\mbox{ in }\Omega,\\ \\
 \displaystyle u|_{\partial\Omega}=0=\left.\frac{\partial u}{\partial \nu}\right|_{\partial\Omega},
\end{array}
\end{equation}
where $0\in \Omega $ is a bounded domain in one dimensional Heisenberg group $ \mathbb{H}$, $0\leq a<Q,\,Q=4$ is the homogeneous dimension
of $\mathbb{H}$ and $f:\Omega\times \R \rar \R$ satisfies either subcritical or critical exponential growth condition.
It is interesting to observe that in case of  $\Omega\subseteq \mathbb{H}^n,\, n\geq 2,$ by the Sobolev embedding theorem, the nonlinearity cannot exceed the
degree $\frac{2Q}{Q-4},$ while the Adams' inequality allows the nonlinearities to have exponential growth when $n=1.$ Therefore Adam's inequality motivates us to
discuss the above problem with exponential growth in $\Omega \subseteq \mathbb{H}.$

Problem \eqref{main_prob}, in bounded domains of  $\R^4$ has been discussed by A. C. Macedo \cite{costa}. Macedo established the existence of
a solution to the following problem  with the aid of singular version of Adams' inequality and by variational arguments:
\begin{equation}
\begin{array}{cc}
\Delta^2 u=\displaystyle\frac{f(x,u)}{|x|^a} \,\,\,\mbox{ in }\Omega,\\ \\
\displaystyle u|_{\partial\Omega}=0=\left.\frac{\partial u}{\partial \nu}\right|_{\partial\Omega},
\end{array}
\end{equation}
where $0\in \Omega \subseteq \R^4$ is a  bounded domain, $0\leq a<4$. M. de Souza \cite{desouza} established the existence of solution for
 the critical problem with singular potential $\displaystyle\frac{1}{|x|^a}$ in the case of $n$-Laplace operator in whole $\R^n$, using variational
techniques. J.M. do \'{O} et. al. \cite{desouza1} established the existence of a critical point to the following functional
\begin{equation}
J(u)=\frac{1}{n}\int_{\R^n}(|\nabla u|^n+|u|^n) dx -\int_{\R^n}\frac{F(u)}{|x|^a},
\end{equation}
where $n\geq 2,\, F:\R^n \rar \R$ is of class $C^1$ and $0\leq a<n.$ For the related works, see the references cited in \cite{desouza,desouza1,costa}.

For the  Trudinger-Moser type inequality in unbounded domains of $\R^2,$ and further generalizations  in unbounded domains in $\R^n,$ we refer to  \cite{rup_moser,li}.
For more details about Moser-Trudinger inequality, we refer to a survey by S.Y.A. Chang and P.C. Wang \cite{chang}.
Several existence results have been proved for problems involving Laplace and n-Laplace operator with exponential nonlinearities,
see for instance \cite{adi2,adi1,sami,fig,fig:survey,fre,bdo1,bdo2,ruf} and references cited therein.

Let us recall the developments on Trudinger-Moser inequality. Let $\Omega\subseteq \R^n,\, n\geq 2$ be a bounded domain.
The Sobolev embedding theorem says that for $p<n,$ $W_0^{1,p}(\Omega)\hookrightarrow L^q(\Omega),\, 1\leq q\leq \frac{np}{n-p}.$ For the limiting case $p=n,$ we have
\[W_0^{1,n}\hookrightarrow L^q(\Omega),\,\,1\leq q<\infty\]
but it is well known (see, Example 4.43 \cite{adamsbook}) that
\[W_0^{1,n}(\Omega) \not\hookrightarrow L^\infty(\Omega). \]
Then there is a natural question that what is the smallest  possible space in which, we have embedding of $W_0^{1,n}(\Omega)?$ This question was answered by N.S. Trudinger \cite{tru}. Trudinger proved that $W_0^{1,n}(\Omega)$ is embedded into Orlicz space $L_A(\Omega),$ where
\[A(t)=\exp\left(t^{\frac{p}{p-1}}\right)-1\]
is an $N$ function.
 Inequality by N.S. Trudinger \cite{tru},
which was later sharpened by J. Moser \cite{moser} is as follows:
\begin{theorem}
 Let $\Omega\subseteq \R^n$ be a bounded domain, $u\in W_0^{1,n}(\Om),\, n\geq 2$ and
 \begin{equation*}
 \int_\Om |\nabla u(x)|^n dx \leq 1,
 \end{equation*}
 then there exists a constant $C,$ which depends on $n$ only such that
 \begin{equation*}
 \int_\Om \exp(\alpha u^p) dx \leq C m(\Omega),
 \end{equation*}
 where
 \[p=\frac{n}{n-1},\, \alpha\leq \alpha_n =n\omega_n^{\frac{1}{n-1}},\, m(\Omega)=\int_\Omega dx\]
 and $\omega_{n-1}$ is the $(n-1)$-dimensional surface area of the unit sphere.

 The integral on the left actually is finite for any positive $\alpha,$ but if $\alpha>\alpha_n$ it can be made arbitrarily large by an appropriate choice of $u.$
\end{theorem}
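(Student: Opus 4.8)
The plan is to follow Moser's strategy of reducing the inequality to a sharp one-dimensional estimate by means of symmetric rearrangement. First I would replace $u$ by its symmetric decreasing rearrangement $u^\ast$, defined on the ball $B\subset\mathbb{R}^n$ centered at the origin with $m(B)=m(\Omega)$. Since $u\in W_0^{1,n}(\Omega)$ vanishes on $\partial\Omega$, the P\'olya--Szeg\H{o} inequality gives $\int_B|\nabla u^\ast|^n\,dx\le\int_\Omega|\nabla u|^n\,dx\le 1$, while the equimeasurability of rearrangement yields $\int_B\exp(\alpha(u^\ast)^p)\,dx=\int_\Omega\exp(\alpha u^p)\,dx$. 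Hence it suffices to prove the bound for nonnegative, radially symmetric, nonincreasing functions on a ball $B$ of radius $R$.

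Next I would pass to a single variable. Writing $u=u(r)$ and performing the logarithmic change of variables $r=R\,e^{-t/n}$ (so $t=0$ is the boundary and $t\to\infty$ the center) together with the normalization $w(t)=\alpha_n^{1/p}\,u(r)$, with $\alpha_n=n\,\omega_{n-1}^{1/(n-1)}$, one checks that the Dirichlet constraint becomes $\int_0^\infty|w'(t)|^n\,dt\le 1$ with $w(0)=0$, and that, because of the Jacobian $\omega_{n-1}r^{n-1}\,dr=m(B)\,e^{-t}\,dt$, the exponential integral transforms for $\alpha=\alpha_n$ into $m(B)\int_0^\infty\exp\big(w(t)^p-t\big)\,dt$. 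The theorem is therefore equivalent to the one-dimensional estimate: if $w(0)=0$ and $\int_0^\infty|w'|^n\,dt\le 1$, then $\int_0^\infty\exp(w^p-t)\,dt\le C(n)$, and the resulting bound $C(n)\,m(\Omega)$ is exactly the claimed form.

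This one-dimensional estimate is the heart of the matter and the step I expect to be the main obstacle. The naive bound, from H\"older's inequality and $w(0)=0$, is
\begin{equation*}
w(t)\le t^{(n-1)/n}\Big(\int_0^t|w'|^n\,ds\Big)^{1/n}\le t^{1/p},
\end{equation*}
which yields only $w^p-t\le 0$, i.e. $\exp(w^p-t)\le 1$ pointwise; integrating this crude bound over $(0,\infty)$ diverges. The difficulty is thus to extract integrable decay of the exponent, and this is precisely where Moser's sharp argument is required: one must split $(0,\infty)$ according to how much of the unit energy budget $\int_0^t|w'|^n\,ds$ has been spent by time $t$ and compare $w$ with the linear extremal profile $t\mapsto t^{1/p}$, showing that whenever $w$ is driven close to the extremal the remaining energy forces $w^p-t\to-\infty$, while an energy deficit produces exponential decay. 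The delicate point is that the critical constant $\alpha_n$ is exactly the threshold at which this balance still gives a finite bound.

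Finally, for the sharpness claim that $\alpha>\alpha_n$ makes the integral unbounded, I would exhibit the Moser concentrating sequence on the unit ball,
\begin{equation*}
m_k(x)=\omega_{n-1}^{-1/n}
\begin{cases}
(\log k)^{(n-1)/n}, & |x|\le 1/k,\\
(\log k)^{-1/n}\,\log(1/|x|), & 1/k\le|x|\le 1,\\
0, & |x|\ge 1,
\end{cases}
\end{equation*}
which satisfy $\int|\nabla m_k|^n\,dx=1$ by a direct computation on the annulus. On the core $|x|\le 1/k$ one finds $\alpha\,m_k^p=\alpha\,\omega_{n-1}^{-1/(n-1)}\log k$, so that $\int_{|x|\le 1/k}\exp(\alpha m_k^p)\,dx$ behaves, up to constants, like $k^{\,n(\alpha/\alpha_n-1)}$; this tends to $+\infty$ precisely when $\alpha>\alpha_n$. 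Together these steps establish both the bound at the critical exponent and its optimality.
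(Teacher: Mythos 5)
The paper does not prove this theorem at all: it is quoted as background, with the proof delegated to Trudinger's and Moser's original papers. So the only meaningful comparison is against Moser's argument, which is exactly what you are reconstructing, and the steps you actually execute are correct. The P\'olya--Szeg\H{o} reduction to radial nonincreasing functions is the right first move; your normalization checks out, since $\alpha_n^{1/p}=n^{(n-1)/n}\omega_{n-1}^{1/n}$ is precisely the constant that turns $\omega_{n-1}\int_0^R|u'(r)|^nr^{n-1}\,dr$ into $\int_0^\infty|w'(t)|^n\,dt$ under $r=Re^{-t/n}$ and simultaneously turns $\alpha_n u^p$ into $w^p$; and the sharpness computation is correct, the core contribution of the Moser sequence being of order $k^{n(\alpha/\alpha_n-1)}$, which diverges exactly when $\alpha>\alpha_n$.

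The genuine gap is that the one-dimensional lemma --- if $w(0)=0$ and $\int_0^\infty|w'|^n\,dt\le1$ then $\int_0^\infty e^{w(t)^p-t}\,dt\le C(n)$ --- is only described, not proved, and at the critical exponent this lemma \emph{is} the theorem; everything else is bookkeeping. You correctly diagnose why it is hard (H\"older gives only $w(t)^p\le t$, hence $e^{w^p-t}\le1$ pointwise, which is not integrable), but ``split according to the energy budget'' is a plan, not an estimate. To close it one needs a quantitative version: writing $\int_0^\infty e^{w^p-t}\,dt=\int_0^\infty e^{-c}\,|E_c|\,dc$ with $E_c=\{t:\ w(t)^p-t\ge -c\}$, it suffices to show $|E_c|\le A(n)(1+c)$; this follows because on $E_c$ the H\"older inequality $w(t)\le t^{1/p}\bigl(\int_0^t|w'|^n\bigr)^{1/n}$ must be nearly saturated, which forces almost the entire unit energy to have been spent by the first time $E_c$ is reached and leaves too little to sustain $w(t)^p\ge t-c$ on a long interval. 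Without this (or an explicit appeal to Moser's Lemma~1, where it is carried out), your argument produces no finite constant $C(n)$, so you should either supply the level-set estimate or cite the lemma rather than gesture at it.
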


In order to deal with problems involving higher order elliptic operators with exponential type nonlinearities, D.R. Adams \cite{adams}
extended the sharp inequality by J. Moser to higher order Sobolev spaces.
Adams proved the following:
\begin{theorem}
 Let $\Omega$ be a bounded and open subset of $\R^n.$ If $m$ is a positive integer less than $n,$ then there exists a constant
 $C_0=C(m,n)$ such that for all $u\in C^m(\R^n)$ with support contained in $\Omega$ and $\norm{\nabla^mu}_p \leq 1,\, p=\frac{n}{m},$ we have
 \[
\frac{1}{|\Omega|}\int_{\Omega}\exp(\beta|u(x)|^{\frac{n}{n-m}})dx\leq C_{0}
\]
for all $\beta\leq\beta(n,m)$ where
\[
\mathit{\beta(n,\ m)\ =}\left\{
\begin{array}
[c]{c}%
\frac{n}{w_{n-1}}\left[  \frac{\pi^{n/2}2^{m}\Gamma(\frac{m+1}{2})}%
{\Gamma(\frac{n-m+1}{2})}\right]  ^{p^\prime},\text{ }\mathrm{when}%
\,\,\,m\,\,\mathrm{is\,\,odd},\\
\frac{n}{w_{n-1}}\left[  \frac{\pi^{n/2}2^{m}\Gamma(\frac{m}{2})}{\Gamma
(\frac{n-m}{2})}\right]  ^{p^\prime},\text{ \ }\mathrm{when}%
\,\,\,m\,\,\mathrm{is\,\,even},%
\end{array}
\right.
\]
$p^\prime =\frac{p}{p-1}.$
Furthermore, for any  $\beta>\beta(n,m)$, the integral can be
made as large as desired, where
\[
\nabla^{m}u=\left\{
\begin{array}
[c]{l}%
\bigtriangleup^{\frac{m}{2}}u,\text{ \ \ \ \ \ \ }\mathrm{for}%
\,\,\,m\,\,\mathrm{even},\\
\nabla\bigtriangleup^{\frac{m-1}{2}}u,\,\,\ \mathrm{for}\,\,\,m\,\,\mathrm{odd}.%
\end{array}
\right.
\]
\end{theorem}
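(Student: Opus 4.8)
The plan is to reduce the $n$-dimensional inequality to a sharp one-dimensional integral estimate through a potential representation of $u$ combined with symmetric rearrangement. First I would record the representation formula: for $u\in C^m(\R^n)$ with support in $\Om$, setting $g=\nabla^m u$ in the sense prescribed in the statement ($\Delta^{m/2}u$ for $m$ even, $\nabla\Delta^{(m-1)/2}u$ for $m$ odd), one expresses $u$ as a Riesz potential
\[
u(x)=\frac{1}{\gamma(n,m)}\int_{\R^n}\frac{g(y)}{|x-y|^{n-m}}\,dy,
\]
where $\gamma(n,m)$ is the normalizing constant of the fundamental solution of the appropriate power of the Laplacian. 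The two cases in the definition of $\nabla^m$ are exactly what force the two Gamma-function formulas for $\beta(n,m)$: the sharp constant comes out as $\beta(n,m)=\frac{n}{\omega_{n-1}}\,\gamma(n,m)^{-p'}$ with $p'=\frac{n}{n-m}$, so computing $\gamma(n,m)$ correctly, including the parity distinction, is the source of the explicit value claimed.

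With the representation in hand, the hypothesis $\norm{\nabla^m u}_p=\norm{g}_p\le 1$ (where $p=n/m$) becomes an $L^p$ bound on the density $g$, and the task is to bound $\frac{1}{|\Om|}\int_\Om\exp(\beta|u|^{p'})\,dx$. The next step is to pass to rearrangements: using O'Neil's inequality for the decreasing rearrangement of a convolution, I would dominate $u^*(t)$ by an expression of the form $\int_t^\infty g^*(s)\,s^{m/n-1}\,ds$ plus a lower-order term, where $g^*$ is the decreasing rearrangement of $g$ and $s^{m/n-1}$ reflects the rearrangement of the kernel $|x|^{m-n}$. A Moser-type change of variables such as $t=|\Om|e^{-\tau}$ then converts the exponential integral into an integral over $(0,\infty)$ governed by a kernel operator, reducing the whole statement to a one-dimensional inequality.

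The heart of the matter is the sharp one-dimensional lemma (Adams' Lemma): if $\phi\ge 0$ satisfies $\int_0^\infty\phi^p\le 1$ and $a(s,t)$ is the kernel produced above, which obeys $a(s,t)\le 1$ for $s\le t$ together with a uniform $L^{p'}$ control of its deviation from the indicator, then
\[
\int_0^\infty\exp\Bigl(\Bigl[\int_0^\infty a(s,t)\,\phi(s)\,ds\Bigr]^{p'}-t\Bigr)\,dt\le C,
\]
with $C$ depending only on $p$. I expect this lemma to be the main obstacle. The exponent $p'$ is the critical one, so a crude application of H\"older's inequality only yields $\bigl[\int a\,\phi\bigr]^{p'}\le t$, which makes the integrand comparable to $1$ and fails to be integrable; the estimate is genuinely borderline and must instead be proved by splitting the domain according to whether the potential is large or small, and treating the critical region through a delicate level-set argument. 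It is precisely here that any slack in the constant would destroy sharpness.

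Finally, for the \emph{furthermore} assertion I would exhibit an explicit extremizing sequence, the higher-order analogue of Moser's logarithmic functions, obtained by truncating and regularizing the fundamental solution $|x|^{m-n}$ so that $\norm{\nabla^m u_k}_p\le 1$ while $u_k$ concentrates logarithmically at the origin. A direct computation then shows that for any $\beta>\beta(n,m)$ the quantity $\frac{1}{|\Om|}\int_\Om\exp(\beta|u_k|^{p'})\,dx$ diverges as $k\to\infty$, confirming that $\beta(n,m)$ cannot be improved.
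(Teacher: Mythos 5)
This theorem is quoted by the paper directly from Adams' article and is not proved there, so the relevant comparison is with Adams' original argument and with the paper's adaptation of it in Section~3; your outline faithfully reproduces both, namely the potential representation of $u$ in terms of $\nabla^m u$, O'Neil's rearrangement inequality for the convolution, the exponential change of variables reducing everything to the one-dimensional lemma $\int_0^\infty \exp(-F(t))\,dt\le C$, and the concentrating logarithmic sequence for sharpness --- exactly the machinery (representation formula \eqref{rep}, Lemma \ref{reconv}, Lemma \ref{ast}, and the Adams functions \eqref{adam_func}) that the paper deploys for its Heisenberg-group analogues. One small slip: with your normalization $u=\gamma(n,m)^{-1}\int g(y)\,|x-y|^{m-n}\,dy$, the constant $\gamma(n,m)$ equals the bracketed Gamma-quotient in the statement, so the sharp threshold is $\beta(n,m)=\frac{n}{w_{n-1}}\,\gamma(n,m)^{p'}$ with a positive exponent, not $\gamma(n,m)^{-p'}$.
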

For applications of Adams' inequality to polyharmonic equations involving exponential type nonlinearities, we refer to \cite{bdo4,lak,lam1,mora}.
A  version of Moser-Trudinger inequality with singular potential was established by Adimurthi and K. Sandeep \cite{adi:sandeep}. They proved the following:
\begin{theorem}
 Let $\Omega$ be an open and bounded subset of $\R^n.$ Let $n\geq 2$ and $u\in W_0^{1,n}(\Omega).$ Then for every $\alpha>0$ and $\beta\in\left[\left.0,n\right)\right.,$
 \[\int_\Om \frac{\exp\left(\alpha |u|^{\frac{n}{n-1}}\right)}{|x|^\beta} dx < \infty.\]
 Moreover,
 \[\sup_{\norm{u}\leq 1} \int_\Om \frac{\exp\left(\alpha |u|^{\frac{n}{n-1}}\right)}{|x|^\beta} dx <\infty\]
 if and only if $\displaystyle \frac{\alpha}{\alpha_n}+\frac{\beta}{n} \leq 1,$ where $\norm{u}=\left(\int_\Om |\De u|^n\right)^{\frac{1}{n}}.$
 \end{theorem}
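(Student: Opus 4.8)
The plan is to split the statement into four independent assertions and treat each with a different tool: (i) pointwise finiteness of the integral for a fixed $u\in W_0^{1,n}(\Om)$; (ii) the uniform bound in the \emph{subcritical} regime $\frac{\al}{\al_n}+\frac{\ba}{n}<1$; (iii) the uniform bound in the \emph{critical} regime $\frac{\al}{\al_n}+\frac{\ba}{n}=1$; and (iv) sharpness, namely that the supremum is $+\infty$ as soon as $\frac{\al}{\al_n}+\frac{\ba}{n}>1$. Here $\al_n=n\,\om_{n-1}^{\frac{1}{n-1}}$ and $\norm{u}=\left(\int_\Om|\na u|^n\,dx\right)^{1/n}$ is the Dirichlet norm of $W_0^{1,n}(\Om)$. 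Parts (i)--(ii) reduce, through Hölder's inequality, to the classical (non-singular) Moser--Trudinger inequality stated above; part (iii) is genuinely critical and requires symmetrization combined with a one-dimensional estimate; part (iv) is handled by the Moser concentrating sequence.

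For (i) and (ii) I would fix conjugate exponents $q,q'>1$ and split
\[
\int_\Om \frac{\exp\!\left(\al|u|^{\frac{n}{n-1}}\right)}{|x|^\ba}\,dx \le \left(\int_\Om \exp\!\left(q\al|u|^{\frac{n}{n-1}}\right)dx\right)^{1/q}\left(\int_\Om |x|^{-q'\ba}\,dx\right)^{1/q'}.
\]
The last factor is finite exactly when $q'\ba<n$, while by the Moser--Trudinger inequality the first factor is bounded uniformly over $\norm{u}\le1$ whenever $q\al\le\al_n$. For (ii) the hypothesis $\frac{\al}{\al_n}+\frac{\ba}{n}<1$ forces $\al<\al_n$, so taking $q=\al_n/\al$ gives $q\al=\al_n$ and $q'\ba=\frac{\al_n\ba}{\al_n-\al}$, and a direct computation shows $q'\ba<n\iff\frac{\al}{\al_n}+\frac{\ba}{n}<1$; both factors are then controlled. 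For (i), $u$ being fixed, the non-singular inequality gives $\exp\!\left(q\al|u|^{\frac{n}{n-1}}\right)\in L^1(\Om)$ for \emph{every} $q$, so choosing $q$ large (hence $q'\ba$ close to $\ba<n$) makes both factors finite.

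The difficulty concentrates in (iii), where Hölder is borderline and must be replaced by a sharp argument; this is the main obstacle. I would first symmetrize: letting $u^*$ be the Schwarz decreasing rearrangement of $u$ on the ball $B_R$ centred at $0$ with $|B_R|=|\Om|$, Pólya--Szegő gives $\norm{\na u^*}_n\le\norm{\na u}_n\le1$, while the Hardy--Littlewood inequality together with the fact that $|x|^{-\ba}$ is itself radially decreasing yields
\[
\int_\Om \frac{\exp\!\left(\al|u|^{\frac{n}{n-1}}\right)}{|x|^\ba}\,dx \le \int_{B_R} \frac{\exp\!\left(\al|u^*|^{\frac{n}{n-1}}\right)}{|x|^\ba}\,dx,
\]
so it suffices to bound the right-hand side over radially decreasing functions. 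For these I would apply Moser's change of variables $|x|=R\,e^{-t/n}$, $w(t)=c\,u(|x|)$ for a suitable dimensional constant $c$, which turns the constraint into $w(0)=0$, $\int_0^\infty|w'(t)|^n\,dt\le1$ and converts the target integral into the one-dimensional expression $C\int_0^\infty \exp\!\big(b\,w(t)^{\frac{n}{n-1}}-(1-\tfrac{\ba}{n})t\big)\,dt$, the factor $1-\tfrac{\ba}{n}$ arising from combining the Jacobian $r^{n-1}\,dr$ with the weight $r^{-\ba}$. The critical identity $\frac{\al}{\al_n}+\frac{\ba}{n}=1$ is exactly what matches the coefficient $b$ to the linear decay rate $1-\tfrac{\ba}{n}$, so that, after the rescaling $s=(1-\tfrac{\ba}{n})t$, the classical one-dimensional Moser lemma applies and delivers a finite bound; verifying this estimate uniformly and checking admissibility of the borderline constant is the delicate point.

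Finally, for (iv) I would test with the Moser concentrating sequence on $B_R$,
\[
u_k(x)=\frac{1}{\om_{n-1}^{1/n}}\begin{cases}(\log k)^{\frac{n-1}{n}}, & |x|\le R/k,\\ \dfrac{\log(R/|x|)}{(\log k)^{1/n}}, & R/k\le|x|\le R,\end{cases}
\]
which satisfies $\norm{\na u_k}_n=1$. On the core $\{|x|\le R/k\}$ one computes $\exp\!\left(\al|u_k|^{\frac{n}{n-1}}\right)\sim k^{\,n\al/\al_n}$, while $\int_{|x|\le R/k}|x|^{-\ba}\,dx\sim k^{-(n-\ba)}$, so the integral is bounded below by a multiple of $k^{\,n(\al/\al_n+\ba/n-1)}$, which tends to $+\infty$ precisely when $\frac{\al}{\al_n}+\frac{\ba}{n}>1$. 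This proves the necessity of the condition and, combined with (i)--(iii), completes the characterization.
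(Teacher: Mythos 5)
This statement (the Adimurthi--Sandeep singular Moser--Trudinger inequality) is quoted in the paper from \cite{adi:sandeep} without any proof, so there is nothing in the paper to compare your argument against; judged on its own, your outline is correct and is essentially the standard proof from that reference. The decomposition is sound: the H\"older splitting handles the fixed-$u$ statement and the strictly subcritical regime (your computation that $q=\alpha_n/\alpha$ gives $q'\beta<n$ exactly when $\frac{\alpha}{\alpha_n}+\frac{\beta}{n}<1$ is right), the symmetrization via P\'olya--Szeg\H{o} and Hardy--Littlewood followed by Moser's logarithmic change of variables does reduce the critical case to the one-dimensional lemma $\int_0^\infty e^{w^{n/(n-1)}-t}\,dt\le C$ at the borderline constant (the rescaling $s=(1-\frac{\beta}{n})t$ preserves the constraint $\int|w'|^n\le 1$, as you indicate), and the Moser concentrating sequence gives the lower bound $k^{\,n(\alpha/\alpha_n+\beta/n-1)}$ establishing necessity. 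Two small points: the norm in the statement as printed involves $\Delta u$, which is a typo for $\nabla u$ (you correctly read it as the Dirichlet norm), and the sharpness argument implicitly uses $0\in\Omega$, which is the intended setting.
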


Motivated by this singular version of Moser-Trudinger inequality several authors studied the following problem
\begin{equation}\label{sin}
 \begin{array}{ll}
-\De_n u+ \lambda u|u|^{n-2}=\gamma \frac{f(x,u)}{|x|^\beta}+ kh(x,u) & \mbox{in }\Omega\subseteq \R^n,\\
u=0 & \mbox{on } \pa\Om,
 \end{array}
\end{equation}
in bounded as well as unbounded domains. See for instance, \cite{adi:sandeep,adi:yang,desouza,lam2} and references cited therein.
N. Lam and G. Lu \cite{lamlu}  established a version of singular Adams' inequality on bounded
domains. More precisely, they proved that:

\begin{theorem}\label{sing_adam}
Let $0\leq\alpha<n$ and $\Omega$ be a bounded domain in $\mathbb{R}^{n}$. Then for all $0\leq\beta\leq\beta_{\alpha,n,m}=\left(  1-\frac{\alpha
}{n}\right)  \beta(n,m)$, we have%
\begin{equation}
\underset{u\in W_{0}^{m,\frac{n}{m}}\left(  \Omega\right)  ,~\left\Vert
\nabla^{m} u\right\Vert _{\frac{n}{m}}\leq1}{\sup}\int_{\Omega}\frac
{e^{\beta\left\vert u\right\vert ^{\frac{n}{n-m}}}}{\left\vert x\right\vert
^{\alpha}}dx<\infty. \label{1.2}%
\end{equation}
When $\beta>\beta_{\alpha,n,m}$, the supremum is infinite. Moreover, when $m$
is an even number, the Sobolev space $W_{0}^{m,\frac{n}{m}}\left(
\Omega\right)  $ in the above supremum can be replaced by a larger Sobolev
space $W_{N}^{m,\frac{n}{m}}\left(  \Omega\right)  .$
\end{theorem}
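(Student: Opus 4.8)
The plan is to adapt Adams' original rearrangement argument, carefully tracking how the singular weight $|x|^{-\alpha}$ rescales the sharp exponent by the factor $1-\frac{\alpha}{n}$. First I would reduce the problem to a statement about Riesz potentials. For $u\in C_0^m(\Omega)$, the fundamental solution of the $m$-th order operator $\nabla^m$ yields the representation
\begin{equation*}
|u(x)|\le \frac{1}{\gamma(n,m)}\int_{\Omega}\frac{|\nabla^m u(y)|}{|x-y|^{n-m}}\,dy,
\end{equation*}
where $\gamma(n,m)$ is the normalizing constant whose $p'$-th power produces exactly $\beta(n,m)$ of Adams' theorem. Setting $g:=\nabla^m u$ with $\norm{g}_{n/m}\le 1$ and writing $p=\frac{n}{m}$, $p'=\frac{n}{n-m}$, the task becomes bounding $\int_\Omega |x|^{-\alpha}\,e^{\beta|I_m g|^{p'}}\,dx$, where $I_m g$ denotes the Riesz potential of $g$.

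Second, I would pass to symmetric decreasing rearrangements. Since $|x|^{-\alpha}$ is radial and decreasing, the Hardy--Littlewood inequality together with O'Neil's lemma for the rearrangement of a convolution bound the weighted integral from above by a one-dimensional quantity. Concretely, after the logarithmic substitution sending a ball of measure $\sim e^{-t}$ to the parameter $t$ (so that concentration at the origin corresponds to $t\to\infty$), the volume element contributes a factor $e^{-t}$ while the weight $|x|^{-\alpha}$ contributes $e^{\frac{\alpha}{n}t}$. Hence the target integral is comparable to
\begin{equation*}
\int_0^\infty \exp\!\Big(\beta\,|w(t)|^{p'}-\big(1-\tfrac{\alpha}{n}\big)t\Big)\,dt,
\end{equation*}
where $w$ is the rearranged potential obeying an Adams-type convolution estimate $w(t)\le \int_0^\infty k(t,\tau)\,\phi(\tau)\,d\tau$ with $\int_0^\infty \phi^{p}\le 1$ and $k$ satisfying Adams' kernel normalization. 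This substitution is where the factor $1-\frac{\alpha}{n}$ is produced, and I expect it to be the most delicate part of the argument: one must verify that the rearrangement of the weight and of the potential combine into precisely this clean exponent without losing the sharp constant.

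Third, I would invoke the one-dimensional lemma of Adams (in the Garsia--Rodemich form): if
$$F(t)=\big(1-\tfrac{\alpha}{n}\big)t-\beta\Big(\int_0^\infty k(t,\tau)\phi(\tau)\,d\tau\Big)^{p'}$$
is bounded below uniformly in admissible $\phi$, then $\int_0^\infty e^{-F(t)}\,dt\le C$. Rescaling Adams' pointwise bound $\beta(n,m)\,w(t)^{p'}\le t+C$ by the factor $1-\frac{\alpha}{n}$ shows that $F$ stays bounded below exactly when $\beta\le\big(1-\frac{\alpha}{n}\big)\beta(n,m)=\beta_{\alpha,n,m}$, which gives finiteness of the supremum in \eqref{1.2}.

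Finally, for the sharpness claim that the supremum is infinite when $\beta>\beta_{\alpha,n,m}$, I would exhibit an Adams--Moser concentrating sequence: mollified, truncated fundamental solutions centered at the origin, normalized so that $\norm{\nabla^m u}_{n/m}=1$. A direct computation then shows the weighted integral diverges as soon as $\beta>\beta_{\alpha,n,m}$, precisely because the concentration occurs at the singular point $x=0$. For even $m$, the representation formula combined with the maximum principle for $-\Delta$ under Navier boundary conditions lets the entire chain of estimates run on the larger space $W_N^{m,\frac{n}{m}}(\Omega)$, since the iterated Green's function of the Laplacian is positive and the pointwise domination $|u(x)|\le \frac{1}{\gamma(n,m)}\int_\Omega |x-y|^{m-n}|\nabla^m u(y)|\,dy$ continues to hold.
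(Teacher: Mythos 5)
The paper does not actually prove this theorem---it is quoted as a known result of Lam and Lu \cite{lamlu}---but your outline reproduces exactly the Adams/Lam--Lu strategy (representation by Riesz potentials, O'Neil's rearrangement estimate for convolutions, Hardy--Littlewood, the exponential change of variables that converts the weight $|x|^{-\alpha}$ into the factor $1-\frac{\alpha}{n}$ in the exponent, and the modified one-dimensional Adams lemma), which is also precisely the chain of lemmas the paper itself runs to prove its Heisenberg analogue, Theorem \ref{sing_adams}, via Lemmas \ref{gxi}, \ref{reconv}, \ref{hardy_littlewood} and \ref{ast}. The one imprecision worth fixing is your phrasing of the final step: ``$F$ bounded below implies $\int_0^\infty e^{-F(t)}\,dt\le C$'' is false on an infinite interval, and what is actually needed is the genuine Adams lemma---a measure estimate on the sublevel sets of $F$---in the form adapted by Lam and Lu to the coefficient $1-\frac{\alpha}{n}$ (Lemma \ref{ast} of the paper); with that substitution your argument is the standard and correct one.
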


In case of Heisenberg group $\h,$ W.S. Cohn and G. Lu \cite{cohn} established a Moser-Trudinger type inequality on bounded domains of $\h.$ They proved the following result:
\begin{theorem}
Let $\h$ be a n-dimensional Heisenberg group, $Q=2n+2,\, Q'=\frac{Q}{Q-1},$ and $\alpha_{Q}=\left(2\pi^n\Gamma(\frac{1}{2})\Gamma(\frac{Q-1}{2})\Gamma(\frac{Q}{2})^{-1}\Gamma(n)^{-1}\right)^{Q'-1}$. Then there exists a constant $C_0$ depending only on $Q$ such that for all $\Om\subseteq \h,$ $|\Om|<\infty,$
\be\label{heis_adam}
\underset{u\in W_0^{1,Q}(\Om),\norm{\nh u}_{L^Q(\Om)}\leq 1}{\sup}\frac{1}{|\Om|}\int_\Om \exp\left(\alpha_Q|u(\xi)|^{Q'}\right)d\xi  \leq C_0 <\infty.
\ee
If $\alpha_Q$ is replaced by any larger number, the integral in \eqref{heis_adam} is still finite for any $u\in W^{1,Q}(\h),$ but the supremum is infinite.
\end{theorem}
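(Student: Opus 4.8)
The plan is to adapt Moser's symmetrization argument to the homogeneous structure of $\h$, following the scheme that reduces a sharp exponential bound to a one-dimensional calculus lemma. The first ingredient I would establish is a sharp \emph{representation formula}: for $u\in C_0^\infty(\Om)$ one writes $u$ as a group convolution of $\nh u$ against the horizontal gradient of Folland's fundamental solution $G$ of the sub-Laplacian. Since $\Dh$ admits a fundamental solution proportional to $\rho(\xi)^{2-Q}$, where $\rho$ is the Kor\'anyi gauge, an integration by parts yields
\be
u(\xi)=\int_{\h} \nh_\eta\, G(\eta^{-1}\xi)\cdot \nh u(\eta)\, d\eta ,
\ee
and because $\nh$ is homogeneous of degree $-1$ under the dilations, the kernel $K(\xi,\eta)=\nh_\eta G(\eta^{-1}\xi)$ decays exactly like $\rho(\eta^{-1}\xi)^{1-Q}$. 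The decisive observation is that $(\rho^{1-Q})^{Q'}=\rho^{-Q}$ is borderline non-integrable, so $K$ lies in the weak Lebesgue space $L^{Q',\infty}(\h)$, and its weak-type norm can be computed explicitly from the surface measure of Kor\'anyi spheres --- this is precisely what fixes the constant $\al_Q$.

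Second, I would feed this kernel into O'Neil's rearrangement inequality for convolutions,
\be
(K*g)^*(\tau)\leq \tau\, K^{**}(\tau)\, g^{**}(\tau)+\int_\tau^\infty K^*(s)\,g^*(s)\,ds ,
\ee
applied with $g=\nh u$ subject to $\norm{\nh u}_{L^Q(\Om)}\leq 1$. After the logarithmic change of variable natural to the homogeneous dimension (setting $t\sim\log(1/\tau)$, under which the decay $\rho^{1-Q}$ becomes an affine profile), the target average $\frac{1}{|\Om|}\int_\Om \exp(\al_Q|u|^{Q'})\,d\xi$ is majorized by a one-dimensional integral of the form
\be
\int_0^\infty \exp\!\left(\Big(\int_0^\infty a(s,t)\,\phi(s)\,ds\Big)^{Q'}-t\right)dt ,
\ee
where $\int_0^\infty \phi^{Q}\,ds\leq 1$ and the kernel $a(s,t)$ is bounded by $1$ below the diagonal with controlled $L^{Q'}$-mass. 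Boundedness of this last integral is exactly the Adams--Moser one-dimensional lemma, which I would prove by the distribution-function method, splitting the range of $t$ according to whether the mass of $\phi$ has already concentrated; for an arbitrary fixed $u$ the finiteness (as opposed to uniform boundedness) of the original integral follows more cheaply from the representation formula and Young's inequality.

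The hard part, I expect, is pinning down the \emph{sharp} constant, i.e. verifying that the weak-$L^{Q'}$ norm of $K$ produces precisely
\[
\al_Q=\Big(2\pi^n\,\Ga(\tfrac12)\,\Ga(\tfrac{Q-1}{2})\,\Ga(\tfrac{Q}{2})^{-1}\,\Ga(n)^{-1}\Big)^{Q'-1}
\]
and not merely some admissible constant; this forces an exact evaluation of $\int |\nh G|^{Q'}$ over Kor\'anyi spheres, in which the anisotropy of the sub-Riemannian gradient enters through the explicit form of $|\nh\rho|$. To show $\al_Q$ is optimal I would construct Heisenberg analogues of Moser's concentrating functions, radial (in the gauge) logarithmic cut-offs
\[
u_k(\xi)=c_k\begin{cases}(\log k)^{1/Q'}, & \rho(\xi)\leq 1/k,\\ \log\big(1/\rho(\xi)\big)\big/(\log k)^{1/Q}, & 1/k\leq \rho(\xi)\leq 1,\\ 0,&\rho(\xi)\geq 1,\end{cases}
\]
with $c_k$ normalized so that $\norm{\nh u_k}_{L^Q}=1$, and show that $\int_\Om \exp(\al|u_k|^{Q'})\,d\xi\to\infty$ as $k\to\infty$ whenever $\al>\al_Q$. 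Assembling the upper bound from the one-dimensional lemma with the lower bound from this concentrating family then yields the stated dichotomy.
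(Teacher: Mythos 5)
First, a point of orientation: the paper does not prove this statement at all --- it is Theorem 1.4, quoted verbatim from W.~S.~Cohn and G.~Lu \cite{cohn} as background. So your proposal can only be measured against the literature and against the analogous argument the authors give in Section 3 for their own second-order inequality (Theorem \ref{adams_ineq}). Your overall architecture --- representation formula, O'Neil's rearrangement inequality, logarithmic change of variables, the Adams one-dimensional lemma for the upper bound, and gauge-radial logarithmic concentrators for optimality --- is indeed the correct skeleton, and it is exactly the machinery of Lemma \ref{ast}, Lemma \ref{reconv} and the proof of Theorem \ref{sing_adams} in the paper. The finiteness-for-fixed-$u$ claim and the blow-up family are essentially fine: one checks that $\int_{1/k\leq\rho\leq1}\bigl|\nh\log(1/\rho)\bigr|^{Q}d\xi=\sigma_{Q}\log k$ with $\sigma_{Q}=\int_{\rho=1}|z|^{Q}d\mu$, which is precisely what makes $\alpha>Q\sigma_{Q}^{1/(Q-1)}=\alpha_{Q}$ fail.

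The genuine gap is in the upper bound with the \emph{sharp} constant, and it is not, as you suggest, merely a matter of ``an exact evaluation of $\int|\nh G|^{Q'}$ over Kor\'anyi spheres.'' The kernel you propose, $K=|\nh G|$ with $G=c\,\rho^{2-Q}$, equals $c(Q-2)\rho^{1-Q}|\nh\rho|$ with $|\nh\rho|=|z|/\rho$, so its angular part on $\Sigma=\{\rho=1\}$ is proportional to $|z|$, which is \emph{not constant} on the Kor\'anyi sphere. Running O'Neil plus the Adams lemma on this kernel produces an admissible exponent governed by $\int_{\Sigma}|z|^{Q'}d\mu$, whereas the sharp constant $\alpha_{Q}=Q\sigma_{Q}^{1/(Q-1)}$ is governed by $\int_{\Sigma}|z|^{Q}d\mu$. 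Tracking the extremal family through your own estimates shows the mismatch explicitly: the O'Neil bound overestimates $u_{k}^{*}$ by the factor $\bigl(\int_{\Sigma}|z|^{Q'}d\mu\bigr)^{1/Q'}\bigl(\int_{\Sigma}|z|^{Q}d\mu\bigr)^{1/Q}\big/\int_{\Sigma}|z|^{2}d\mu$, which is strictly greater than $1$ by H\"older precisely because $|z|$ is non-constant on $\Sigma$. Consequently your scheme proves the inequality only for some $\alpha$ strictly below $\alpha_{Q}$ and proves failure only above $\alpha_{Q}$, leaving an open band of exponents --- and closing that band is the actual content of Cohn--Lu's paper. Their fix is a different, anisotropy-adapted representation formula whose effective kernel is (a constant times) $\rho^{1-Q}(|z|/\rho)^{Q-1}$, so that the $Q'$-th power of the angular part integrates over $\Sigma$ to exactly $\sigma_{Q}$; the crude bound $|\langle\nh\rho,\nh u\rangle|\leq|\nh\rho|\,|\nh u|$ applied to the fundamental-solution kernel cannot be made to yield this. (Note that this obstruction is invisible in the second-order theorem the paper actually proves, where the kernel $\rho^{2-Q}$ is a pure power of the gauge with no angular factor --- which is why the authors' Theorem \ref{adams_ineq} goes through with the naive representation formula while the first-order statement does not.)
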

 Lam et. al. \cite{lam3} established the Moser-Trudinger type inequality with a singular potential. Their result reads as follows:
 \begin{theorem}
 Let $\h$ be a n-dimensional Heisenberg group, $\Om\subseteq \h,$ $|\Om|<\infty, \, Q=2n+2,\, Q'=\frac{Q}{Q-1},\, 0\leq \beta <Q,$ and $\displaystyle\alpha_Q=Q\sigma_{Q}^{\frac{1}{Q-1}},\, \sigma_Q=\int_{\rho(z,t)=1} |z|^Q d\mu.$ Then there exists a constant $C_0$ depending only on $Q$ and $\beta$ such that
 \be\label{singheis_adam}
\underset{u\in W_0^{1,Q}(\Om),\norm{\nh u}_{L^Q(\Om)}\leq 1}{\sup}\frac{1}{|\Om|^{1-\frac{\beta}{Q}}}\int_\Om \exp\left(\alpha_Q(1-\frac{\beta}{Q})|u(\xi)|^{Q'}\right)d\xi  \leq C_0 <\infty.
\ee
If $\alpha_Q\left(1-\frac{\beta}{Q}\right)$ is replaced by any larger number, then the supremum is infinite.
\end{theorem}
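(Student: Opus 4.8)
The plan is to read \eqref{singheis_adam} as the sharp weighted companion of the Cohn--Lu inequality \eqref{heis_adam}, the latter being the case $\beta=0$. Here the integrand carries the singular potential $\rho(\xi)^{-\beta}$, which is locally integrable on $\h$ exactly because $\beta<Q$; the normalisation $|\Om|^{-(1-\beta/Q)}$ and the reduced coefficient $\alpha_Q(1-\beta/Q)$ are forced by the requirement that the left-hand side be invariant under the anisotropic dilations $\delta_\lambda$ of $\h$ (a volume of order $R^Q$ against a weighted mass of order $R^{Q-\beta}$). First I would settle pointwise finiteness for a fixed $u$ with $\norm{\nh u}_{L^Q(\Om)}\le1$: splitting by H\"older with exponents $p,p'$ and taking $p$ slightly larger than $Q/(Q-\beta)$ makes $\rho^{-\beta p'}$ integrable, while the factor $\int_\Om\exp\!\big(p\,\alpha_Q(1-\tfrac{\beta}{Q})|u|^{Q'}\big)\,d\xi$ is finite for this particular $u$ by the single-function part of \eqref{heis_adam}, which holds for every exponent. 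The real content is the uniformity of the bound over the unit ball $\{\norm{\nh u}_{L^Q}\le1\}$.

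For the supremum I would reduce to gauge-radial functions by symmetrisation on $\h$. Writing $u^{*}$ for the decreasing rearrangement of $|u|$ with respect to the Kor\'anyi gauge balls $\{\rho<r\}$, two facts are needed. A P\'olya--Szeg\H{o} inequality for the horizontal gradient, $\norm{\nh u^{*}}_{L^Q}\le\norm{\nh u}_{L^Q}$, so that the energy constraint survives; and, since $\rho^{-\beta}$ is a radially decreasing weight, a Hardy--Littlewood rearrangement inequality guaranteeing that passing from $u$ to $u^{*}$ only increases $\int_\Om\exp(\cdots)\rho^{-\beta}\,d\xi$. Together these let me assume $u=u(\rho)$ is radial and decreasing without loss of generality.

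For such $u$ I would pass to one variable. Using $|\nh\rho|=|z|/\rho$ and the polar decomposition attached to $\sigma_Q=\int_{\rho=1}|z|^Q\,d\mu$, the energy becomes $\norm{\nh u}_{L^Q}^{Q}=\sigma_Q\int_0^\infty|u'(r)|^Q r^{Q-1}\,dr$. Letting $R$ be the gauge radius of the ball of volume $|\Om|$ and substituting $r=R\,e^{-t}$, together with $v=\sigma_Q^{1/Q}u$, the constraint turns into $\int_0^\infty|\dot v(t)|^Q\,dt\le1$ with $v(0)=0$. The same change of variables, using $\alpha_Q=Q\sigma_Q^{1/(Q-1)}$ so that $\alpha_Q|u|^{Q'}=Q|v|^{Q'}$, collapses the weighted integral --- after the normalisation $|\Om|^{-(1-\beta/Q)}$ --- to
\[
C(Q,\beta)\int_0^\infty\exp\!\big((Q-\beta)\,(|v(t)|^{Q'}-t)\big)\,dt ,
\]
where the $-\beta$ in $\rho^{-\beta}$ and the volume factor combine to produce precisely the shifted coefficient $Q-\beta=Q(1-\beta/Q)$. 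A sharp one-dimensional lemma of Moser/Leckband type --- using $|v(t)|\le t^{1/Q'}$, hence $|v|^{Q'}-t\le0$, with the decay needed to beat the divergence of $\int_0^\infty 1\,dt$ --- bounds this last integral by a constant depending only on $Q$ and $\beta$, giving \eqref{singheis_adam}.

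The optimality of the constant I would obtain from the Moser concentrating family adapted to $\h$: functions built from a truncated logarithmic profile in the gauge variable, constant on a tiny ball $\{\rho\le Re^{-k}\}$, interpolating like $\log(1/\rho)$ on the annulus, vanishing outside $\{\rho\le R\}$, and normalised so that $\norm{\nh u_k}_{L^Q}=1$; for any coefficient $\alpha_Q(1-\beta/Q)+\var$ the corresponding functional diverges as $k\to\infty$, because $\rho^{-\beta}$ piles up mass at the origin exactly where $u_k$ peaks. I expect the genuine obstacle to be the symmetrisation step: on $\h$ the Kor\'anyi gauge is not the Carath\'eodory distance of the sub-Riemannian structure, so the P\'olya--Szeg\H{o} inequality for $\nh$ under gauge-ball rearrangement is delicate and must be invoked in its sharp, constant-preserving form. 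A way to sidestep it is the rearrangement-free method of Lam and Lu: one argues directly with the distribution function of $u$, separates the super-level and sub-level contributions at a threshold calibrated to $\beta$, and applies \eqref{heis_adam} on the super-level part. This is necessary precisely because, as the H\"older computation of the first step shows, the two integrability constraints $p\le Q/(Q-\beta)$ and $\beta p'<Q$ collide exactly at the sharp constant, so no fixed splitting exponent can work and the threshold must be allowed to depend on $u$.
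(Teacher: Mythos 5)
The statement you are proving is quoted in the paper from Lam--Lu--Tang \cite{lam3}; the paper itself gives no proof of it, but it does prove the directly analogous singular Adams inequality for $\Delta_{\H}$ (Theorem \ref{sing_adams}), and that proof shows which route actually works. The difficulty is exactly the one you flag and then set aside: your argument is built on reducing to gauge-radial functions via a P\'olya--Szeg\H{o} inequality $\norm{\nh u^{*}}_{L^Q}\leq\norm{\nh u}_{L^Q}$ for the decreasing rearrangement with respect to Kor\'anyi balls. No such sharp, constant-preserving inequality is available on $\h$: Kor\'anyi balls are not isoperimetric sets for the horizontal perimeter (the sub-Riemannian isoperimetric problem is open, and the gauge balls are known not to be optimal), so the symmetrisation step that drives your one-dimensional reduction cannot be invoked. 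Since the whole point of the theorem is the sharp constant $\alpha_Q(1-\beta/Q)$, losing even a fixed multiplicative constant in the energy under rearrangement destroys the argument, and your own H\"older computation in the first step already shows there is no slack with which to absorb such a loss.

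The escape hatch you mention in your last sentences is the right one, but it is not carried out, and as written it would have to be the proof rather than a remark. The working argument (both in \cite{lam3} and in this paper's proof of Theorem \ref{sing_adams}) avoids symmetrising $u$ altogether: one writes $u$ pointwise via the representation formula as a constant times the convolution $I_1\ast|\nh u|$ (respectively $I_2\ast\Delta_{\H}u$ in the second-order case), applies the Hardy--Littlewood inequality to the product with the radially decreasing weight $\rho^{-\beta}$, and then controls the rearrangement of the \emph{convolution} by O'Neil's lemma (Lemma \ref{reconv}), which requires no P\'olya--Szeg\H{o} principle. After the exponential change of variables $t=|\Om|e^{-s}$ the weight $\rho^{-\beta}$ contributes precisely the factor that converts Adams' one-dimensional lemma into its scaled version with parameter $1-\beta/Q$ (Lemma \ref{ast} with $\alpha=1-\beta/Q$), and this is where the reduced constant $\alpha_Q(1-\tfrac{\beta}{Q})$ actually comes from. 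Your dilation-invariance heuristic for that constant and your concentrating family for the sharpness part are both sound; the gap is that the uniform upper bound has no valid proof along your primary route.
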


Motivated by the above research works, in order to obtain the existence of a solution to \eqref{main_prob} on Heisenberg group which involves exponential and singular nonlinearity,
it is natural to establish singular Adams type inequality on Heisenberg group. In fact, in this article, we first  establish Adams  type inequality for biharmonic operator
on Heisenberg group and also establish the singular  Adams type inequality. We, then prove existence of a solution to \eqref{main_prob} as an application to Adams type inequality,
where $f:\Om \times \R \rar \R$ is a function satisfying either subcritical or critical exponential growth condition.

We point out that very little research works are available for the existence of solution to singular elliptic equations on Heisenberg group even for the Laplacian,
see for instance \cite{tyagi_heisen,mok,chen}.  For existence results related to Laplace equation without singularity,
we refer to \cite{biagini,bir,bir1,bran,citti,gar,gamara,jia1,jia2,lan,lan1,lu,peng,ugu,ugu1}. For existence result concerning biharmonic operator on Heisenberg group,
we refer to \cite{zhang} and for qualitative questions related to biharmonic operator on Heisenberg group, we refer to \cite{GDJT}.

 \noindent Next, we define subcritical and critical growth for $f(\xi,u).$

\noindent We say that a function $f:\Om\times\R \rar \R$ has subcritical growth on $\Om\subseteq {\mathbb{H}}$ if
\be\label{subcrit}
\lim_{|u|\rar \infty} \frac{|f(\xi,u)|}{\exp(\alpha u^2)}=0,\,\,\text{uniformly on $\Om,$ }  \forall \,\alpha>0.
\ee
We say that $f$ has critical exponential growth if there exists $\alpha_0>$ such that
\be\label{crit1}
\lim_{|u|\rar \infty} \frac{|f(\xi,u)|}{\exp(\alpha u^2)}=0,\,\,\text{uniformly on $\Om,$ }  \forall \,\alpha>\al_0
\ee
and
\be\label{crit2}
\lim_{|u|\rar \infty} \frac{|f(\xi,u)|}{\exp(\alpha u^2)}=+\infty,\,\,\text{uniformly on $\Om,$ }  \forall \,\alpha<\al_0.
\ee

We define
\be\label{gam}
\Lambda=\underset{0\neq u\in D_0^{2,2}(\Om)}{\inf} \frac{\norm{u}^2}{\int_\Om \frac{|u|^2}{\rho(\xi)^a}}>0,
\ee
$\text{where }\xi=(z,t) \text{ and } \rho(\xi)=(|z|^4+t^2)^{\frac{1}{4}},\, 0\leq a <4.$

We assume the following conditions on the nonlinearity $f:$

\noindent(H1) $f:\bar{\Omega}\times \R \rar \R $ is continuous, $f(\xi,u)\geq 0$ on $\Omega \times \left[\left.0,\infty\right.\right),$ $f(\xi,u)\leq 0$ when $u\leq 0.$

\noindent(H2) There exists $R_0>0,\, M>0$ such that, $\forall \, u\geq R_0, \, \forall \xi\in \Omega $
\[0<F(\xi,u)\leq M f(\xi,u),\]
where $F(\xi,u)=\int_0^u f(\xi,s)ds.$

\noindent (H3) There exist $R_0>0,\, \theta >2$ such that $\forall \, |u|\geq R_0,\, \forall \xi\in \Om,$
\[\theta F(\xi,u) \leq uf(\xi,u).\]

\noindent(H4) $\displaystyle\limsup_{u\rar 0^+} \frac{2F(\xi,u)}{|u|^2}< \Lambda,$ where $\Lambda$ is defined by \eqref{gam}.

\noindent(H5) $\displaystyle\lim_{u\rar \infty}uf(\xi,u)\exp(-\al_0|u|^{2})\geq \beta_1 >\frac{(4-a)A }{4\al_0 R^{4-a} \mathcal{M}},$ where $
\mathcal{M}$ and $A$ are defined in  Section 2.

We remark that Problem \eqref{main_prob} has the following special features, which makes it challenging to study:
\begin{enumerate}[(i)]
\item It contains the nonlinearity $f,$ which is of exponential growth and potential $\displaystyle \frac{1}{\rho(\xi)^a},\, 0\leq a\leq 4,$ which has  singularity at $\xi=0.$
This problem is handled by the use of singular version of Adams' type inequality.
\item The case $a=4,$ is critical in the potential. Since we do not have the singular Adams' type inequality in  case of $a=4,$ therefore, we use the approximation method.
More precisely,  we approximate the Problem \eqref{main_prob} with a sequence of problems which are subcritical in potential, i.e. $a<4$ and then,
we pass the limit to conclude that Problem \eqref{main_prob} has a nontrivial solution in case $a=4.$
\end{enumerate}
 Next, we state our main results, which we will prove in next sections.
 \begin{theorem}\label{adams_ineq} Let $\Omega$ be a bounded domain in $\H.$
 Then there exists a constant $C(\Om)$ such that for all $u\in C_0^\infty(\Omega)$ and $\norm{\Delta_{\H}u}_{2}\leq 1,$
 \begin{equation}
 \int_\Om \exp\left(A|u(\xi)|^{2}\right)\leq C_0<\infty,
 \end{equation}
 where
 \[\displaystyle A=\frac{Q}{c_0\gamma_1^{2}},\]
 \begin{equation}\label{c0}
  c_0=\int_{\Sigma} d\mu,\, \Sigma=\{\xi\in \H^1:|\xi|=1\}
  \end{equation}
  and
 \begin{equation}\label{gamma}
 \gamma_1=\left(2\int_{\H^1}|z|^2(|z|^4+t^2+1)^{-\frac{5}{2}}d\xi\right)^{-1}.
 \end{equation} Furthermore, if we choose any number greater than $A$ then inequality  fails to hold.
  \end{theorem}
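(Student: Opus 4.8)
The plan is to adapt D.\,R.\,Adams' symmetrization scheme to the sub-Riemannian setting, following the representation-formula technique W.\,S.\,Cohn and G.\,Lu used for the first-order Moser--Trudinger inequality on $\H$. Since the hypothesis controls $\norm{\Delta_{\H}u}_2$ rather than the full $\Delta_{\H}^2u$, the natural kernel is the fundamental solution of the sub-Laplacian itself. The first step is the representation formula: for $u\in C_0^\infty(\Om)$, extended by zero to all of $\H$, the fundamental solution $\Phi(\xi)=\gamma_1\,\rho(\xi)^{2-Q}$ of $\Delta_{\H}$ (with $Q=4$, so $\Phi$ is a constant multiple of $\rho^{-2}$, and its normalizing coefficient is precisely the constant $\gamma_1$ of \eqref{gamma}) gives
\[
u(\xi)=\int_{\H}\Phi(\eta^{-1}\circ\xi)\,\Delta_{\H}u(\eta)\,d\eta,\qquad\text{so}\qquad |u(\xi)|\le\int_{\H}|\Phi(\eta^{-1}\circ\xi)|\,|f(\eta)|\,d\eta,
\]
where $f=\Delta_{\H}u$ and $\norm{f}_2\le1$. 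The constant $\gamma_1$ together with the spherical measure $c_0$ is exactly what will propagate into the final threshold $A$.

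Next I would pass to decreasing rearrangements with respect to Haar measure. Since $\Phi$ is radial in $\rho$ and $|\{\rho<r\}|=\frac{c_0}{Q}r^Q$, computing its distribution function gives the explicit rearrangement $\Phi^*(s)=\kappa\,s^{-1/2}$ with $\kappa=\gamma_1\,(c_0/Q)^{1/2}$ (here $(2-Q)/Q=-1/2$ because $Q=4$). O'Neil's inequality for convolutions then yields
\[
u^*(t)\le t\,\Phi^{**}(t)\,f^{**}(t)+\int_t^{\infty}\Phi^*(s)\,f^*(s)\,ds,
\]
which bounds $u^*$ by a one-dimensional convolution of the explicit kernel $\kappa\,s^{-1/2}$ against $f^*$, under the single constraint $\int_0^{|\Om|}f^*(s)^2\,ds=\norm{f}_2^2\le1$.

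With the logarithmic change of variables $t=|\Om|\,e^{-\tau}$ and $s=|\Om|\,e^{-\sigma}$, the estimate collapses to $\varphi(\tau)\le\kappa\int_0^\tau g(\sigma)\,d\sigma$, where $\varphi(\tau)=u^*(|\Om|e^{-\tau})$ and $\int_0^\infty g(\sigma)^2\,d\sigma\le1$, while the quantity to be controlled becomes $|\Om|\int_0^\infty\exp\!\big(A\,\varphi(\tau)^2-\tau\big)\,d\tau$. This is precisely the setting of the classical one-dimensional Adams--Moser lemma, whose sharp threshold is $A\kappa^2\le1$; substituting $\kappa^2=\gamma_1^2\,c_0/Q$ produces the critical value $A=\frac{Q}{c_0\gamma_1^2}$ and the uniform bound $\int_\Om\exp(A|u|^2)\le C_0<\infty$.

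For the optimality assertion I would test the supremum against an Adams--Moser concentrating family $u_\epsilon$ modelled on the truncated fundamental solution near $\xi=0$, normalized so that $\norm{\Delta_{\H}u_\epsilon}_2=1$; a direct evaluation of the resulting $\rho$-polar integrals shows $\int_\Om\exp(B\,u_\epsilon^2)\to\infty$ as $\epsilon\to0$ whenever $B>A$, so $A$ cannot be enlarged. The main obstacle throughout is the constant bookkeeping in the first two steps: unlike the Euclidean case one must establish the sharp representation formula on the non-abelian group $\H$ and pin down the exact leading coefficient of $\Phi^*$, since any error in $\gamma_1$ or $c_0$ enters $A$ quadratically. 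Concretely, the delicate points are verifying that Folland's normalization of $\Phi$ is exactly the integral \eqref{gamma} and that the polar volume factor in $|\{\rho<r\}|$ contributes exactly $c_0/Q$. Once the sharp asymptotics $\Phi^*(s)=\gamma_1(c_0/Q)^{1/2}s^{-1/2}$ are in hand, the rearrangement inequality and the one-dimensional lemma are faithful adaptations of Adams' original argument.
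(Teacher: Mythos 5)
Your proposal is correct and follows essentially the same route as the paper: both start from Folland's representation formula \eqref{rep} to bound $|u|$ by $\gamma_1|(I_2\ast\Delta_{\H}u)|$ and then reduce the claim to exponential integrability of that potential with threshold $Q/c_0$, yielding $A=Q/(c_0\gamma_1^{2})$. The only difference is that the paper invokes the Cohn--Lu result (Theorem \ref{kohn_thm}, via Corollary \ref{i2exp}) as a black box at this point, whereas you re-derive it through O'Neil's inequality, the rearrangement of $\rho^{2-Q}$, and Adams' one-dimensional lemma --- which is exactly the machinery the paper itself deploys later in Lemma \ref{gxi} and Lemma \ref{sing_conv} for the singular version.
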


  \begin{theorem}\label{sing_adams} Let $\Omega$ be a bounded domain in $\H$ and $0\leq a <4.$  Then
there exists a constant $C_0(\Omega)$ such that for all $u\in C_0^\infty(\Omega)$ and $\norm{\Delta_{\H}u}_{2}\leq 1,$
 \[\int_\Om \frac{\exp\left(A\left(1-\frac{a}{4}\right)|u(\xi)|^{2}\right)}{\rho(\xi)^a}\leq C_0<\infty, \]
 where $\rho(\xi)=\sqrt{(|z|^4+t^2)^{\frac{1}{4}}},$ $\displaystyle A=\frac{4}{c_0\gamma_1^{2}}$ and $\gamma_1$ and $c_0$ are as defined by
\eqref{c0} and \eqref{gamma}, respectively. Furthermore, if we choose any number greater than $A\left(1-\frac{a}{4}\right)$ then inequality  fails to hold.
  \end{theorem}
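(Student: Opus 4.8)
The plan is to follow the template underlying the proof of Theorem \ref{adams_ineq}, inserting the singular weight $\rho(\xi)^{-a}$ at the appropriate stage rather than trying to reduce to the unweighted inequality. It is tempting to peel off the weight by H\"older's inequality: for conjugate exponents $p,p'$,
\[
\int_\Om \frac{\exp\bigl(A(1-\tfrac a4)|u|^2\bigr)}{\rho^a}\,d\xi \le \Bigl(\int_\Om \exp\bigl(pA(1-\tfrac a4)|u|^2\bigr)\,d\xi\Bigr)^{1/p}\Bigl(\int_\Om \rho^{-ap'}\,d\xi\Bigr)^{1/p'}.
\]
The first factor is controlled by Theorem \ref{adams_ineq} precisely when $p(1-\tfrac a4)\le 1$, i.e. $p\le \tfrac{4}{4-a}$, while the second factor is finite precisely when $ap'<Q=4$, i.e. $p'<\tfrac4a$. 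These two constraints meet exactly at $p=\tfrac{4}{4-a}$, $p'=\tfrac4a$, where $\int_\Om\rho^{-4}\,d\xi$ diverges logarithmically. Thus the sharp constant $A(1-\tfrac a4)$ sits exactly on the H\"older boundary, and the naive splitting cannot deliver it; this forces a direct argument.

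First I would record the representation formula used for Theorem \ref{adams_ineq}: writing $f=\Delta_{\H}u$ with $\norm{f}_2\le1$, one has $u=G*f$ where $G(\xi)\sim \rho(\xi)^{-(Q-2)}=\rho(\xi)^{-2}$ is the fundamental solution of $\Delta_{\H}$ on $\H$ (here $Q=4$), so that $u$ is a Riesz potential $I_2 f$ on a space of homogeneous dimension $Q=4$, the borderline case $L^{Q/2}=L^2$. Next I would pass to decreasing rearrangements, but now with respect to the singular measure $d\nu=\rho^{-a}\,d\xi$ rather than the Haar measure $d\xi$. Because $\rho^{-a}$ is a gauge-radial decreasing weight, the super-level sets of $u$ are comparable to gauge balls and the $\nu$-measure of a gauge ball of radius $r$ behaves like $r^{Q-a}$ (this uses $a<Q=4$); consequently the whole computation runs with the effective homogeneous dimension $Q-a$ in place of $Q$. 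Applying the O'Neil-type convolution inequality to $u=I_2 f$ then reduces the weighted exponential integral to a one-dimensional integral in the rearranged variable.

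The heart of the matter is the one-dimensional Adams/Moser lemma: after the reduction one is left with estimating an integral of the form $\int_0^\infty \exp\bigl(\phi(s)-s\bigr)\,ds$, where $\phi$ encodes the kernel exponent $2$, the weight exponent $a$, and the target constant. The dimension shift $Q\mapsto Q-a$ replaces the normalising factor $Q$ by $Q-a=4(1-\tfrac a4)$ in the exponent, and this is exactly the source of the factor $(1-\tfrac a4)$ multiplying $A=\tfrac{4}{c_0\gamma_1^2}$. Verifying that this yields a bound finite and uniform over $\{\norm{\Delta_{\H}u}_2\le1\}$, with the constant neither better nor worse than $A(1-\tfrac a4)$, is the main obstacle, since one must track the geometric constants $c_0$ and $\gamma_1$ through the rearrangement with both the potential kernel $\rho^{-2}$ and the weight $\rho^{-a}$ singular at $\xi=0$.

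Finally, for the sharpness clause I would exhibit a Moser-type concentrating sequence adapted to $\H$: a family $(u_k)$ supported near $\xi=0$, normalised so that $\norm{\Delta_{\H}u_k}_2=1$, built from truncated logarithms of the gauge $\rho$ (the analogues of Moser's functions and of the test functions witnessing sharpness in Theorem \ref{adams_ineq}). Evaluating $\int_\Om \rho^{-a}\exp\bigl((A(1-\tfrac a4)+\varepsilon)|u_k|^2\bigr)\,d\xi$ on this sequence, the weight contributes the reduced exponent $Q-a$ to the volume of the concentration ball, and one checks that for every $\varepsilon>0$ the integral diverges as $k\to\infty$, establishing that no constant larger than $A(1-\tfrac a4)$ is admissible.
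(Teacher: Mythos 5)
Your proposal is correct and takes essentially the same route as the paper: the representation formula gives $|u|\le\gamma_1\,|(I_2\ast\Delta_{\H}u)|$, O'Neil's rearrangement inequality handles the convolution, and the weighted exponential integral reduces to the one-dimensional Adams lemma with parameter $1-\frac{a}{4}$, which is exactly the dimension shift you describe. The only difference is cosmetic: the paper keeps Haar-measure rearrangements throughout and absorbs the weight via the Hardy--Littlewood inequality, using $(\rho^{-a})^\ast(t)=(V/t)^{a/4}$, before the change of variables $t=|\Omega|e^{-s}$ --- this is the rigorous version of your ``effective dimension $Q-a$'' step, and since O'Neil's inequality is formulated for Haar rearrangements, rearranging directly with respect to $\rho^{-a}\,d\xi$ would require this intermediate comparison anyway.
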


\begin{theorem}\lab{mainthm_sub}
 Assume that $f$ satisfies the subcritical growth condition  \eqref{subcrit} and  (H1)-(H5) hold, then  Problem \eqref{main_prob} has a weak solution for $0<a<4$.
\end{theorem}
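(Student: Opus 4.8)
The plan is to obtain the solution variationally as a critical point of the energy functional associated with \eqref{main_prob}. On the space $E=D_0^{2,2}(\Om)$ equipped with $\norm{u}=\norm{\Delta_{\H}u}_2$, define
\begin{equation*}
J(u)=\frac12\int_\Om |\Delta_{\H}u|^2\,d\xi-\int_\Om \frac{F(\xi,u)}{\rho(\xi)^a}\,d\xi,
\end{equation*}
so that weak solutions of \eqref{main_prob} are exactly the critical points of $J$. The first task is to check that $J$ is well defined and of class $C^1(E,\R)$. Since $f$ has subcritical growth \eqref{subcrit}, for each $\al>0$ one has $|f(\xi,u)|\le C_\al\exp(\al u^2)$ and a corresponding bound on $F$, and the singular Adams inequality of Theorem \ref{sing_adams} makes the singular integral defining the nonlinear part finite and continuously differentiable, with $\langle J'(u),v\rangle=\int_\Om \Delta_{\H}u\,\Delta_{\H}v-\int_\Om \rho^{-a}f(\xi,u)v$.

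Next I would verify the mountain pass geometry. Using (H4) together with the definition \eqref{gam} of $\Lambda$ and the subcritical growth, one estimates $\int_\Om \rho^{-a}F(\xi,u)\le \frac12(\Lambda-\var)\int_\Om \rho^{-a}|u|^2+(\text{higher order})$, and since $\int_\Om\rho^{-a}|u|^2\le \Lambda^{-1}\norm{u}^2$ this gives $J(u)\ge\delta>0$ on a small sphere $\norm{u}=r$. Condition (H3), the Ambrosetti--Rabinowitz inequality $\theta F(\xi,u)\le uf(\xi,u)$ with $\theta>2$, forces $F$ to grow faster than quadratically, so that for a fixed $u_0\ge0$, $u_0\not\equiv0$, one has $J(tu_0)\to-\infty$ as $t\to\infty$; choosing $e=t_0u_0$ with $\norm{e}>r$ and $J(e)<0$ completes the geometry. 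The Mountain Pass Theorem then produces a Palais--Smale sequence $(u_k)$ at the minimax level $c=\inf_{\ga\in\Gamma}\max_{[0,1]}J(\ga)$, i.e. $J(u_k)\to c$ and $J'(u_k)\to0$.

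The sequence $(u_k)$ is bounded in $E$: combining $J(u_k)-\frac1\theta\langle J'(u_k),u_k\rangle$ with (H3) (and (H2) to control the region $\{u<R_0\}$) yields $(\frac12-\frac1\theta)\norm{u_k}^2\le c+1+o(1)\norm{u_k}$, so $\norm{u_k}\le\sigma$. Up to a subsequence $u_k\rightharpoonup u$ in $E$; because $Q=4$ and $m=2$ put us exactly in the borderline case, the Folland--Stein embeddings $D_0^{2,2}(\Om)\hookrightarrow L^q(\Om)$ are compact for every $q<\infty$, so $u_k\to u$ in each $L^q$ and $u_k\to u$ a.e. in $\Om$.

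The crux, and the step I expect to be the main obstacle, is passing to the limit in the singular nonlinear term $\int_\Om \rho^{-a}f(\xi,u_k)\va$ for test functions $\va$, and in $\int_\Om \rho^{-a}F(\xi,u_k)$. The bound $\norm{u_k}\le\sigma$ lets me rescale $v_k=u_k/\norm{u_k}$ and apply Theorem \ref{sing_adams} to $v_k$, giving $\int_\Om \rho^{-a}\exp\big(A(1-\tfrac a4)|u_k|^2/\sigma^2\big)\le C_0$ uniformly in $k$. Since subcritical growth allows the exponent $\al$ in $|f(\xi,u_k)|\le C_\al\exp(\al u_k^2)$ to be taken arbitrarily small, I can arrange $\al\sigma^2<A(1-\tfrac a4)$, so that $\{\rho^{-a}f(\xi,u_k)\}$ is bounded in some $L^s(\Om)$ with $s>1$, hence uniformly integrable; a Vitali-type argument together with $u_k\to u$ a.e. then yields $\rho^{-a}f(\xi,u_k)\to\rho^{-a}f(\xi,u)$ in $L^1$ and the analogous convergence for $F$. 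Passing to the limit in $\langle J'(u_k),\va\rangle\to0$ shows that $u$ is a weak solution. Finally, $u\not\equiv0$: were $u\equiv0$, the same uniform integrability would force $\int_\Om\rho^{-a}f(\xi,u_k)u_k\to0$, whence $\norm{u_k}^2=\langle J'(u_k),u_k\rangle+o(1)\to0$ and $c=\lim J(u_k)=0$, contradicting $c>0$, which the geometry via (H4) guarantees. This completes the proof.
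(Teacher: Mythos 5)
Your proposal is correct and follows essentially the same route as the paper: mountain pass geometry from (H4) and (H2)--(H3), boundedness of the Palais--Smale sequence via the Ambrosetti--Rabinowitz condition, and compactness of the singular nonlinear term via Theorem \ref{sing_adams} combined with H\"older's inequality (choosing the exponent $r>1$ so that $ar<4$). The only real difference is one of bookkeeping: the paper proves the full Palais--Smale condition (strong convergence $u_k\to u$ in $D_0^{2,2}(\Om)$, Lemma \ref{psc}) and then invokes the mountain pass theorem directly, whereas you pass to the weak limit in the Euler--Lagrange equation and argue nontriviality separately from $c>0$ --- which is the argument the paper reserves for the critical-growth case.
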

\begin{theorem}\lab{mainthm_crit}
 Assume that $f$ satisfies the critical growth condition  \eqref{crit1},\eqref{crit2} and  (H1)-(H5) hold, then  Problem \eqref{main_prob} has a weak solution for $0<a<4$.
\end{theorem}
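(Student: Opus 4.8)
The plan is to realize the weak solution as a nontrivial critical point of the energy functional
\[
J(u)=\frac{1}{2}\norm{\Delta_{\H}u}_2^2-\int_\Om \frac{F(\xi,u)}{\rho(\xi)^a}\,d\xi,\qquad u\in D_0^{2,2}(\Om),
\]
by the Mountain Pass Theorem, following the same architecture as the subcritical Theorem \ref{mainthm_sub} but with a delicate level estimate inserted to compensate for critical growth. First I would check that $J$ is well defined and $C^1$ on $D_0^{2,2}(\Om)$: here the critical growth conditions \eqref{crit1}--\eqref{crit2} together with the singular Adams inequality (Theorem \ref{sing_adams}) make the nonlinear term and its Gateaux derivative finite. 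Next I would verify the Mountain Pass geometry. Hypothesis (H4), combined with the definition \eqref{gam} of $\Lambda$ and Theorem \ref{sing_adams}, produces constants $\delta>0$ and $\varrho>0$ with $J(u)\ge\delta$ on $\norm{\Delta_{\H}u}_2=\varrho$; the Ambrosetti--Rabinowitz type conditions (H2)--(H3) force $F$ to grow super-quadratically, so that $J(te)\to-\infty$ along a fixed direction $e\ge0$, giving a point with negative energy beyond the sphere. This defines a minimax level $c\ge\delta>0$, and Ekeland's principle yields a Palais--Smale sequence $(u_k)$ at level $c$, bounded in $D_0^{2,2}(\Om)$ by the standard (H3) argument, so that $u_k\rightharpoonup u$ weakly and a.e.\ along a subsequence.

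The crux, and the sole point where this proof genuinely departs from the subcritical case, is the loss of compactness caused by the exponential nonlinearity. Since Theorem \ref{sing_adams} only controls $\exp\!\big(A(1-\tfrac{a}{4})|u|^2\big)/\rho^a$ under $\norm{\Delta_{\H}u}_2\le1$, the term $f(\xi,u_k)/\rho^a$ converges in $L^1$ only when the asymptotic Dirichlet energy of $(u_k)$ remains strictly below the admissible threshold, namely
\[
2c<\frac{A\big(1-\frac{a}{4}\big)}{\al_0}=\frac{(4-a)A}{4\al_0}.
\]
I would establish this strict bound by evaluating the minimax value along a path built from a family of Adams--Moser type concentrating functions on $\H$ (the Heisenberg analogue of the extremizers used to prove sharpness in Theorem \ref{adams_ineq}). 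Hypothesis (H5), whose right-hand side is precisely $\frac{(4-a)A}{4\al_0 R^{4-a}\mathcal{M}}$, is calibrated so that the leading contribution of $F$ along these test functions pushes the maximum of $J$ strictly below $\frac{(4-a)A}{8\al_0}$. I expect this to be the main obstacle: the concentration computation must be carried out in the sub-Riemannian geometry, and the constants $\mathcal{M}$ and $A$ from Section~2 must line up exactly for the strict inequality to close.

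Once the level bound is secured, compactness follows by a concentration--compactness dichotomy. The (H3) relation forces $\norm{\Delta_{\H}u_k}_2^2\to 2c$, and the Brezis--Lieb lemma gives $\norm{\Delta_{\H}(u_k-u)}_2^2\le 2c-\norm{\Delta_{\H}u}_2^2+o(1)$, which in both alternatives stays below $\frac{(4-a)A}{4\al_0}$; consequently $\exp(\al_0 u_k^2)/\rho^a$ is bounded in some $L^{p}$ with $p>1$, yielding uniform integrability and hence
\[
\int_\Om \frac{f(\xi,u_k)\,\va}{\rho(\xi)^a}\,d\xi\ \longrightarrow\ \int_\Om \frac{f(\xi,u)\,\va}{\rho(\xi)^a}\,d\xi
\qquad\text{for all }\va\in C_0^\infty(\Om).
\]
Passing to the limit in $J'(u_k)=o(1)$ then shows that $u$ is a weak solution of \eqref{main_prob}. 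Finally I would exclude $u\equiv0$: if $u=0$ the relation $\norm{\Delta_{\H}u_k}_2^2\to2c<\frac{(4-a)A}{4\al_0}$ forces $\int_\Om F(\xi,u_k)/\rho^a\to0$, whence $J(u_k)\to c$ would contradict $c>0$. This produces a nontrivial weak solution for every $0<a<4$, completing the proof.
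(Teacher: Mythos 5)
Your proposal follows the same architecture as the paper's proof: mountain pass geometry from (H4) and (H2)--(H3), a strict minimax level bound $c<\frac{4-a}{8}\frac{A}{\alpha_0}$ obtained by testing $J$ along the concentrating Adams functions $A_k$ and using the calibration in (H5) (this is exactly Lemma \ref{maxlem}), $L^1$-convergence of the singular nonlinear term along the Palais--Smale sequence, passage to the limit in $J'(u_k)v$, and finally exclusion of $u\equiv 0$ via the level bound and the singular Adams inequality. The one place you genuinely diverge is the compactness step: the paper does \emph{not} use the level bound to prove convergence of the nonlinear term. Its Lemma \ref{psc1} derives $\frac{f(\xi,u_k)}{\rho(\xi)^a}\rar\frac{f(\xi,u)}{\rho(\xi)^a}$ in $L^1(\Om)$ solely from the uniform bound $\int_\Om f(\xi,u_k)u_k/\rho(\xi)^a\,d\xi\le C$ and a.e.\ convergence, via the truncation argument of Lemma 2.1 in \cite{fig}; the level estimate enters only afterwards, to rule out $u\equiv 0$. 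Your route through Brezis--Lieb and uniform integrability of $\exp(\alpha_0 u_k^2)/\rho(\xi)^a$ is the other standard mechanism, but as written it has a mild circularity: the claim that (H3) forces $\norm{\Delta_{\H}u_k}_2^2\to 2c$ holds only when $\int_\Om F(\xi,u_k)/\rho(\xi)^a\to 0$ (i.e.\ in the $u\equiv0$ case); in general one gets $\norm{\Delta_{\H}(u_k-u)}_2^2\to 2(c-J(u))$ and must first know $J(u)\ge 0$, which presupposes that $u$ is already a critical point. The paper's ordering --- cheap $L^1$ convergence first, level bound only for nontriviality --- avoids this, and is what your argument should be reorganized to match; otherwise the two proofs buy the same conclusion with the same key inputs.
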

We say \eqref{main_prob} has a critical potential case when $a=4.$ In this case there is no singular adams type inequality.
In   critical potential case, we establish the following:
\begin{theorem}\lab{mainthmapprox_sub}
 Assume that $f$ satisfies the subcritical growth condition  \eqref{subcrit} and  (H1)-(H5) hold, then  Problem \eqref{main_prob} has a weak solution for $a=4$.
\end{theorem}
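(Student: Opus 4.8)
The plan is to carry out the approximation scheme announced in item (ii) of the introductory remarks, using the already-established subcritical-potential result, Theorem \ref{mainthm_sub}, as a black box. Fix a sequence $a_k\in(0,4)$ with $a_k\rar 4$, and for each $k$ consider the problem
\begin{equation*}
\Delta_{\H}^2 u=\frac{f(\xi,u)}{\rho(\xi)^{a_k}}\ \text{ in }\Om,\qquad u|_{\pa\Om}=0=\left.\frac{\pa u}{\pa\nu}\right|_{\pa\Om}.
\end{equation*}
Since $f$ satisfies \eqref{subcrit} and (H1)--(H5) and $0<a_k<4$, Theorem \ref{mainthm_sub} furnishes a nontrivial weak solution $u_k\in D_0^{2,2}(\Om)$, realised as a mountain-pass critical point of the energy $J_{a_k}(u)=\tfrac12\norm{u}^2-\int_\Om \rho(\xi)^{-a_k}F(\xi,u)\,d\xi$ at a level $c_{a_k}$. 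The goal is to show that $\{u_k\}$ admits a subsequence converging to a nontrivial weak solution of \eqref{main_prob} with $a=4$.

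The second step is to obtain bounds uniform in $k$. Since $u_k$ is a critical point, $\langle J_{a_k}'(u_k),u_k\rangle=0$, i.e. $\norm{u_k}^2=\int_\Om \rho^{-a_k}f(\xi,u_k)u_k\,d\xi$; combining $c_{a_k}=J_{a_k}(u_k)$ with this identity and the Ambrosetti--Rabinowitz condition (H3) gives $c_{a_k}\geq(\tfrac12-\tfrac1\theta)\norm{u_k}^2-C$. A uniform upper bound $\sup_k c_{a_k}<\infty$, obtained by comparing $J_{a_k}$ along a single admissible path and using $F\geq 0$ together with $a_k<4$, then forces $\norm{u_k}\leq M$ for all $k$. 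Passing to a subsequence, $u_k\rightharpoonup u$ in $D_0^{2,2}(\Om)$, $u_k\rar u$ a.e. in $\Om$, and $u_k\rar u$ in $L^p(\Om)$ for every finite $p$; moreover the critical-point identity yields the uniform estimate $\int_\Om \rho^{-a_k}|f(\xi,u_k)u_k|\,d\xi=\norm{u_k}^2\leq M^2$.

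The decisive step is to pass to the limit in the weak formulation
\begin{equation*}
\int_\Om \Delta_{\H}u_k\,\Delta_{\H}\va\,d\xi=\int_\Om \frac{f(\xi,u_k)}{\rho(\xi)^{a_k}}\,\va\,d\xi,\qquad \va\in C_0^\infty(\Om).
\end{equation*}
The left side converges by $u_k\rightharpoonup u$. For the right side I would prove that $\rho^{-a_k}f(\xi,u_k)\rar \rho^{-4}f(\xi,u)$ in $L^1(\Om)$ by a convergence lemma of de Figueiredo--Miyagaki--Ruf type: the integrand converges a.e. (from $u_k\rar u$ a.e., continuity of $f$, and $\rho^{-a_k}\rar\rho^{-4}$ pointwise off $\xi=0$), and the uniform $L^1$ bound on $\rho^{-a_k}f(\xi,u_k)u_k$ from the previous step controls the contribution of $\{|u_k|>R\}$ by $M^2/R$. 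On $\{|u_k|\leq R\}$ away from the origin, $|f(\xi,u_k)|$ is bounded by its maximum on a compact set while $\rho^{-a_k}$ is uniformly integrable, so Vitali's theorem applies; near the origin one invokes (H1) and (H4) --- which make $f(\xi,u_k)$ vanish quadratically as $u_k\rar 0$ --- together with the Hardy--Rellich type bound underlying the constant $\Lambda$ of \eqref{gam}, to control $\rho^{-a_k}f(\xi,u_k)$ uniformly there. Letting $k\rar\infty$ yields $\int_\Om \Delta_{\H}u\,\Delta_{\H}\va=\int_\Om \rho(\xi)^{-4}f(\xi,u)\va$, so $u$ solves \eqref{main_prob} with $a=4$.

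Finally I would rule out $u\equiv 0$. The hypothesis (H5), calibrated precisely so that the mountain-pass levels stay below the compactness threshold, gives a uniform lower bound $c_{a_k}\geq c_\ast>0$; were $u\equiv0$ one would deduce $\int_\Om \rho^{-a_k}f(\xi,u_k)u_k\rar 0$ and hence $\norm{u_k}\rar0$, forcing $c_{a_k}\rar0$, a contradiction. I expect the main obstacle to be exactly the near-origin analysis in the third step: since the limiting weight $\rho^{-4}$ is \emph{not} integrable at $\xi=0$ (the endpoint $a=4=Q$ is where the singular Adams inequality of Theorem \ref{sing_adams} degenerates, its constant $A(1-\tfrac{a}{4})$ tending to $0$), the uniform integrability of $\rho^{-a_k}f(\xi,u_k)$ cannot be read off from any single exponential bound and must instead be extracted from the sign and near-zero growth of $f$ together with the Hardy--Rellich inequality; ensuring that no mass of $\rho^{-a_k}|u_k|^2$ concentrates at the singularity as $a_k\rar4$ is where the real difficulty lies.
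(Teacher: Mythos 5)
Your proposal follows essentially the same route as the paper: the paper likewise approximates with subcritical exponents ($a_n=4-\tfrac1n$), obtains mountain-pass solutions $u_n$ of the approximate problems, extracts a uniformly bounded subsequence, and passes to the limit in the weak formulation via pointwise convergence of $\rho^{-a_n}f(\xi,u_n)$ plus Vitali's theorem. If anything, your write-up is more scrupulous than the paper's on exactly the points you flag --- the uniform (in $n$) bound on the mountain-pass levels, the uniform integrability of $\rho^{-a_n}f(\xi,u_n)$ near the origin needed to legitimately invoke Vitali, and the nontriviality of the limit, none of which the paper's own proof of this theorem addresses explicitly.
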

\begin{theorem}\lab{mainthmapprox_crit}
 Assume that $f$ satisfies the critical growth condition  \eqref{crit1},\eqref{crit2} and  (H1)-(H5) hold, then Problem \eqref{main_prob} has a weak solution for $a=4$.
\end{theorem}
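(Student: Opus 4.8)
The plan is to treat the critical-potential case $a=4$ by the approximation scheme announced in the introduction: replace $a=4$ by a sequence $a_k\in(0,4)$ with $a_k\nearrow 4$, solve each of the resulting subcritical-potential problems via Theorem \ref{mainthm_crit}, and then let $k\to\infty$. For each $k$ the energy functional
\[
J_k(u)=\frac12\norm{\Delta_{\H}u}_2^2-\int_\Om \frac{F(\xi,u)}{\rho(\xi)^{a_k}}\,d\xi
\]
is, by Theorem \ref{mainthm_crit} together with (H1)--(H5), of mountain-pass type and admits a nontrivial weak solution $u_k\in D_0^{2,2}(\Om)$ at its mountain-pass level $c_k$, so that $J_k'(u_k)=0$ and $J_k(u_k)=c_k$. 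First I would record that the levels $c_k$ are bounded uniformly in $k$: choosing a single fixed comparison path $t\mapsto tv$ (independent of $k$) and using $F\ge 0$ together with the fact that $\rho^{-a_k}$ is dominated by a fixed integrable function on the bounded set $\Om$ gives $c_k\le \max_t J_k(tv)\le C$, while the test-function estimate built from (H4)--(H5) keeps these levels strictly below the critical Adams threshold uniformly in $k$.

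Next I would derive a uniform energy bound $\norm{\Delta_\H u_k}_2\le C$ from the Ambrosetti--Rabinowitz condition (H3): writing
\[
c_k = J_k(u_k)-\frac1\theta\langle J_k'(u_k),u_k\rangle \ge \Bigl(\frac12-\frac1\theta\Bigr)\norm{\Delta_\H u_k}_2^2 - C
\]
and invoking $\theta>2$ with the uniform bound on $c_k$ yields boundedness of $\{u_k\}$ in $D_0^{2,2}(\Om)$. Passing to a subsequence, $u_k\rightharpoonup u_0$ weakly in $D_0^{2,2}(\Om)$ and $u_k\to u_0$ a.e.\ in $\Om$. Since $a_k\to 4$, we have $\rho(\xi)^{-a_k}\to\rho(\xi)^{-4}$ pointwise for $\xi\neq 0$, and by continuity of $f$, $f(\xi,u_k)\to f(\xi,u_0)$ a.e.

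The crux of the argument, and the step I expect to be the main obstacle, is to pass to the limit in the singular nonlinear term and to show both that $u_0$ solves \eqref{main_prob} with $a=4$ and that $u_0$ is nontrivial. For the convergence I would first obtain an $L^1$-bound on $\frac{f(\xi,u_k)}{\rho^{a_k}}$ from (H2) (which controls $F$ by $f$) together with the energy bound, and then upgrade the a.e.\ convergence to $L^1$-convergence by the generalized convergence lemma of de Figueiredo--Miyagaki--Ruf / do \'{O}; the singular Adams inequality of Theorem \ref{sing_adams}, applied with the exponents $a_k<4$, supplies the equi-integrability of the exponential term needed to exclude loss of mass, provided the uniform energy bound keeps $\norm{\Delta_\H u_k}_2$ below the level at which the Adams supremum ceases to control concentration. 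This is exactly where the critical (rather than subcritical) growth makes the analysis delicate: one must rule out concentration of the exponential nonlinearity, and for this the quantitative lower bound (H5) on $\liminf_{u\to\infty} u f(\xi,u)e^{-\alpha_0 u^2}$ is used to force the approximate levels below the corresponding critical value, which in turn prevents the limit from degenerating and guarantees $u_0\neq 0$.

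Finally, with the limit of the nonlinear term identified, I would test the weak formulation of the $a_k$-problem against an arbitrary $\varphi\in C_0^\infty(\Om)$, pass to the limit using weak convergence of $\Delta_\H u_k$ on the left-hand side and the $L^1$-convergence established above on the right-hand side, and conclude that $u_0$ is a weak solution of \eqref{main_prob} with $a=4$. Its nontriviality then follows from the sub-threshold level estimate combined with (H4)--(H5), which completes the proof.
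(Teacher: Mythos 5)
Your proposal follows essentially the same route as the paper: approximate the critical potential $\rho^{-4}$ by $\rho^{-a_k}$ with $a_k\nearrow 4$ (the paper takes $a_k=4-\tfrac{1}{k}$), solve each approximate problem via Theorem \ref{mainthm_crit}, extract a bound on $\norm{u_k}$ uniform in $k$ from the Ambrosetti--Rabinowitz structure, and pass to the limit in the weak formulation using a.e.\ convergence of the singular nonlinear term together with uniform integrability (the paper invokes Vitali's convergence theorem where you invoke the de Figueiredo--Miyagaki--Ruf type lemma backed by the singular Adams inequality, but these play the same role). If anything, you are more explicit than the paper about keeping the mountain-pass levels uniformly below the critical threshold and about verifying nontriviality of the limit, points the paper's own proof leaves essentially unaddressed.
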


The plan of the article is as follows. In Section 2, we give important preliminaries on Heisenberg group and auxiliary results, which are used to prove the main theorems.
In Section 3, we prove Theorem \ref{adams_ineq} and Theorem \ref{sing_adams}. In Section 4, we prove Theorem \ref{mainthm_sub}--Theorem \ref{mainthmapprox_crit}.

\section{Preliminaries and Auxiliary Results}
First, let us recall the briefs on the Heisenberg group $\H^{n}.$ The Heisenberg group $\H^{n}= (\R^{2n+1},\,\cdot),$ is the space
$\R^{2n+1}$ with the non-commutative law of product
$$(x,y,t)\cdot(x',y',t')= (x+x',\,y+y', t+t'+ 2(\langle y,x'\rangle -\langle x,\,y'\rangle)),                     $$
where $x,\,y,\,x',\,y'\in \R^{n},\,t,\,t'\,\in \R$ and $\langle \cdot ,\cdot\rangle$ denotes the standard inner product in $\R^{n}.$
This operation endows $\H^{n}$ with the structure of a Lie group. The Lie algebra of $\H^{n}$
is generated by the left-invariant vector fields
$$ T= \frac{\partial}{\partial t},\,\,\,X_{i}= \frac{\partial}{\partial x_{i} }+ 2y_{i} \frac{\partial}{\partial t},\,\,
Y_{i}= \frac{\partial}{\partial y_{i} }- 2x_{i} \frac{\partial}{\partial t},\,\,i=1,\,2,.\,3,\,\ldots,\,n. $$
These generators satisfy the non-commutative formula
$$[X_{i},Y_{j}]= -4 \delta_{ij} T,\,\,[X_{i},X_{j}]= [Y_{i},Y_{j}]=  [X_{i},T] = [Y_{i},T] =0.$$

Let
$ z=(x,\,y)\in \R^{2n},\,\,\xi= (z,\,t)\in \H^{n}.$ The parabolic dilation $$\delta_{\la}\xi= (\la x,\,\la y,\,\la^{2} t)$$ satisfies
$$  \delta_{\la}(\xi_{0}\textbf{.}\xi     )  =  \delta_{\la}\xi   \textbf{.} \delta_{\la} \xi_{0}  $$
and $$ | \xi|  = (|z|^{4}+ t^{2} )^{\frac{1}{4} }= ((x^2 + y^2)^{2}+ t^{2} )^{\frac{1}{4} }  $$
is a norm with respect to the parabolic dilation which is known  as Kor\'{a}nyi gauge norm $N(z,\,t).$
In other words,
$\rho(\xi) = (|z|^{4}+ t^{2} )^{\frac{1}{4} }$
 denotes the Heisenberg distance between $\xi$ and the origin.
Similarly, one can define the distance between
$(z,\,t)$ and $(z',\,t')$ on $\H^{n}$ as follows:
$$\rho(z,\,t;z',t')= \rho((z',\,t')^{-1}\,.\,(z,\,t)).  $$
 It is clear that the vector fields $X_{i},\,Y_{i},\,i=1,\,2,\,\ldots,\,n$ are homogeneous of degree $1$ under the norm
$|\cdot\,|$ and $T$ is homogeneous of degree $2.$ The Lie algebra of Heisenberg group has the stratification
$\H^{n}= V_{1} \oplus V_{2},$ where the $2n$-dimensional horizontal space $V_{1}$
is spanned by $\{X_{i},\,Y_{i} \},\,\,i=1,\,2,\,\ldots,\,n,$ while $V_{2}$ is spanned by $T.$
The Kor\'{a}nyi ball of center $\xi_{0}$ and radius $r$ is defined by
$$  B_{\H^{n}}(\xi_{0},\,r)= \{ \xi: |\xi^{-1}\textbf{.} \xi_{0} | \leq r  \}                  $$
and it satisfies
$$ |B_{\H^{n}}(\xi_{0},\,r)| = |B_{\H^{n}}(0,\,r)| =  r^{d}|B_{\H^{n}}(\textbf{0},\,1)|,            $$
where $|.|$ is the $(2n+1)$-dimensional Lebesgue measure on $\H^{n}$  and $d= 2n+2$ is the so called the homogeneous dimension of Heisenberg group $\H^{n}.$
The Heisenberg gradient and Heisenberg Laplacian or the Laplacian-Kohn operator on
$\H^{n}$ are given by
$$\nabla_{\H^{n}}= (X_{1},\,X_{2},\,\ldots,\,X_{n},\, Y_{1},\,Y_{2},\,\ldots,\,Y_{n})$$
and
$$  \De_{\H^{n}}= \sum_{i=1}^{n} X_{i}^{2}+ Y_{i}^{2}=
\sum_{i=1}^{n}\left(\frac{\partial^{2} }{\partial x_{i}^{2}}+  \frac{\partial^{2} }{\partial y_{i}^{2}} +
4 y_{i} \frac{\partial^{2} }{\partial x_{i}\partial t}- 4 x_{i} \frac{\partial^{2} }{\partial y_{i}\partial t}
+ 4(x_{i}^{2}+y_{i}^{2} )\frac{\partial^{2} }{\partial t^{2}}   \right).$$

G.B. Folland \cite{folland} proved the existence of fundamental solution for the  sublaplacian $-\Delta_{\H^n}$ on the Heisenberg group $\H^n.$ Using Corollary 1 \cite{folland}, we have the following representation formula for each $u\in C_0^\infty(\Omega),$
\begin{equation}\label{rep}
u(\xi)= -\gamma_n \int_{\mathbb{H}^n}\Delta_{\H^n} u(\eta) |\xi\cdot \eta^{-1}|^{2-Q} d\eta ,
\end{equation}
 where $Q=2n+2$ is the homogeneous dimension of the Heisenberg group $\H^n$ and
 \begin{equation}\label{cn}
 \gamma_n=\left(n(n+1)\int_{\H^n}|z|^2(|z|^4+t^2+1)^{-\frac{n+4}{2}}d\xi\right)^{-1},\,\xi=(z,t).
 \end{equation}
 Next, we define convolution on $\H^n$, see \cite{folland_stein} for details.
 \begin{definition}[Convolution]
 If $f$ and $g$ are measurable functions on $\H^n,$ then their convolution $f\ast g$ is defined as
 \be\no
 (f\ast g)(\xi)=\int_{\H^n} f(\eta)g(\eta^{-1}\cdot \xi)d\eta=\int_{\H^n} f(\xi\cdot\eta^{-1})g(\eta)d\eta,
 \ee
provided the integrals converge.
\end{definition}

\begin{definition}[$D^{1,p}(\Omega)$ and $D_0^{1,p}(\Omega)$ Space]
 Let $\Omega \subseteq \h$ be open and $1<p<\infty$. Then we define
 \[D^{1,p}(\Omega)=\{u:\Omega\rightarrow \R\text{ such that }\, u,|\nh u| \in L^p(\Omega) \}.\]
 $D^{1,p}(\Omega)$ is equipped with the norm
 \[\norm{u}_{D^{1,p}(\Omega)}=\left(\norm{u}_{L^p(\Omega)}+\norm{\nh u}_{L^p(\Omega)}\right)^{\frac{1}{p}}.\]
 $D_0^{1,p}(\Omega)$ is the closure of $C_0^\infty(\Omega)$ with respect to the norm
 \[\norm{u}_{D_0^{1,p}(\Omega)}=\left(\int_\Omega |\nh u|^p dz dt\right)^{\frac{1}{p}}.\]
\end{definition}
\begin{definition}[$D^{2,p}(\Omega)$ and $D_0^{2,p}(\Omega)$ Space]
 Let $\Omega \subseteq \h$ be open and $1<p<\infty$. Then we define
 \[D^{2,p}(\Omega)=\{u:\Omega\rightarrow \R\text{ such that }\, u,|\nh u|, |\Dh u| \in L^p(\Omega) \}.\]
 $D^{2,p}(\Omega)$ is equipped with the norm
 \[\norm{u}_{D^{2,p}(\Omega)}=\left(\norm{u}_{L^p(\Omega)}+\norm{\nh u}_{L^p(\Omega)}+\norm{\Dh u}^p\right)^{\frac{1}{p}}.\]
 $D_0^{2,p}(\Omega)$ is the closure of $C_0^\infty(\Omega)$ with respect to the norm
 \[\norm{u}_{D_0^{2,p}(\Omega)}=\left(\int_\Omega |\Dh u|^p dz dt\right)^{\frac{1}{p}}.\]
\end{definition}

\begin{theorem}(Embedding Theorem)\label{embedding}
 Let $k\in \mathbb{N}$ and $p\in\left[\left.1,\infty\right.\right).$
 \begin{enumerate}[(i)]
\item \ If $k<\frac{Q}{p},$ then $ D_0^{k,p}(\Omega)$ is continuously embedded into $L^{p^*}(\Omega),$\\ for $\displaystyle \frac{1}{p^*}=\frac{1}{p}-\frac{k}{Q}.$
 \item If $k=\frac{Q}{p},$ then  $ D_0^{k,p}(\Omega)$ is continuously embedded into $L^r(\Omega),$ for $r\in \left[\left.1,\infty\right.\right).$
\item If $k>\frac{Q}{p},$ then $\displaystyle D_0^{k,p}(\Omega)$ is continuously embedded into  $C^{0,\gamma}(\bar{\Omega}),$ for all $0\leq \gamma < k-\frac{Q}{p}.$
\end{enumerate}
\end{theorem}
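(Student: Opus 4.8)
The plan is to reduce all three embeddings to mapping properties of the Riesz (fractional integration) potential on the Heisenberg group, which I build from the fundamental-solution representation formula \eqref{rep}. For $\beta\in(0,Q)$ set
\[
I_\beta g(\xi)=\int_{\h} g(\eta)\,|\eta^{-1}\cdot\xi|^{\beta-Q}\,d\eta .
\]
The first step is a pointwise domination: for every $u\in C_0^\infty(\Omega)$,
\[
|u(\xi)|\le C\,I_k\big(|D^k u|\big)(\xi),
\]
where $D^k u=\Dh^{k/2}u$ when $k$ is even and $D^k u=\nh\Dh^{(k-1)/2}u$ when $k$ is odd, so that $\norm{D^k u}_p=\norm{u}_{D_0^{k,p}(\Omega)}$ by the natural extension of the $D_0^{1,p}$ and $D_0^{2,p}$ norms. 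This is obtained by iterating \eqref{rep}: a single application gives $u=-\gamma_n I_2(\Dh u)$, and the $m$-fold group convolution of the Folland kernel $|\cdot|^{2-Q}$ is comparable to a kernel homogeneous of degree $2m-Q$, which produces the order-$k$ estimate for even $k=2m$; for odd $k$ one integrates by parts once to replace the order-$2$ kernel by its horizontal gradient (homogeneous of degree $1-Q$) and then iterates.

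Given this domination, part (i) follows from the Hardy--Littlewood--Sobolev inequality on homogeneous groups (Folland--Stein): for $1<p<\infty$ and $\frac1{p^*}=\frac1p-\frac kQ$ one has $\norm{I_k g}_{p^*}\le C\norm{g}_p$, which, applied to $g=|D^k u|$, yields $\norm{u}_{p^*}\le C\norm{u}_{D_0^{k,p}(\Omega)}$.

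For part (ii) the exponent $k=Q/p$ makes the kernel $|\eta^{-1}\cdot\xi|^{k-Q}=|\eta^{-1}\cdot\xi|^{-Q/p'}$ locally integrable to every power strictly below $p'$, where $p'=p/(p-1)$; since $\Omega$ is bounded I restrict the kernel to a fixed Kor\'anyi ball containing all differences $\eta^{-1}\cdot\xi$ with $\xi,\eta\in\Omega$ and apply Young's inequality for convolution on $\h$ to conclude $u\in L^r(\Omega)$ with $\norm{u}_r\le C(r,\Omega)\norm{u}_{D_0^{k,p}(\Omega)}$ for every finite $r$ (equivalently, interpolate the subcritical estimates of part (i) taken with $\tilde k<k$). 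For part (iii), $k>Q/p$, I estimate $I_k(|D^k u|)$ and its oscillation directly. Writing $\delta=\rho(\xi^{-1}\cdot\xi')$ and splitting $\int_{\h}$ into the ball $B(\xi,2\delta)$ and its complement, the near part is controlled because $|\cdot|^{k-Q}\in L^{p'}_{\mathrm{loc}}$ when $k>Q/p$, while the far part is controlled using the Kor\'anyi mean-value estimate for the difference $\big||\eta^{-1}\cdot\xi|^{k-Q}-|\eta^{-1}\cdot\xi'|^{k-Q}\big|$ together with H\"older's inequality; optimizing in the split gives $|u(\xi)-u(\xi')|\le C\,\delta^{\gamma}\norm{D^k u}_p$ for every $\gamma<k-\frac Qp$, and boundedness of $\Omega$ upgrades this to the full $C^{0,\gamma}(\bar\Omega)$ bound.

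The main obstacle I expect is establishing the pointwise domination $|u|\le C\,I_k(|D^k u|)$ cleanly for all $k$ --- in particular the odd-order case, and verifying that iterating the Folland kernel $|\cdot|^{2-Q}$ (and its horizontal gradient) really composes, under the \emph{non-commutative} group convolution, to a kernel that is comparable to the homogeneous kernel $|\cdot|^{k-Q}$: on $\h$ the convolution of two homogeneous kernels is only comparable to, not equal to, the expected power of the gauge, so one must invoke the homogeneity and the finiteness of $\int_{\Sigma}d\mu$ to get the clean kernel bound. Once the Hardy--Littlewood--Sobolev inequality and this domination are in hand, parts (i)--(iii) are the standard subcritical, critical, and supercritical potential estimates.
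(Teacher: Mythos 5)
The paper does not actually prove Theorem \ref{embedding}: it is stated in Section 2 as a known fact (the subelliptic Sobolev embedding theorem on stratified groups, essentially due to Folland and Folland--Stein) and is then only invoked, in the single case $k=2$, $p=2$, $Q=4$, in the proof of Lemma \ref{geo1} and in the various compactness arguments. So there is no in-paper proof to compare yours against; what you have written is, in outline, the standard potential-theoretic proof from the literature, and it is essentially correct. The reduction to the Riesz potential $I_k$ by iterating the representation formula \eqref{rep}, the Hardy--Littlewood--Sobolev inequality of Folland--Stein for part (i), Young's inequality on the truncated kernel (or interpolation of the subcritical cases) for the critical case (ii), and the Morrey-type near/far splitting for (iii) are all the right tools, and the obstacle you flag --- that the non-commutative convolution of two homogeneous kernels on $\H^n$ is only comparable to, not equal to, the expected gauge power --- is real but is resolved exactly as you indicate; this composition lemma is part of Folland's subelliptic estimates. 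The one genuine gap is the endpoint $p=1$ in part (i), which the statement as written includes: the strong-type inequality $\norm{I_k g}_{Q/(Q-k)}\leq C\norm{g}_{1}$ is false (only the weak type $(1,\frac{Q}{Q-k})$ estimate holds), so the potential argument cannot deliver the $L^1$-based embedding, and on stratified groups that case needs a separate argument (truncation combined with the horizontal coarea/isoperimetric inequality). Since the paper only ever uses $p=2$, this does not affect anything downstream, but your proof as written establishes the theorem only for $1<p<\infty$. A smaller point worth recording is that in part (iii) the H\"older seminorm must be taken with respect to the Kor\'anyi (or Carnot--Carath\'eodory) distance, which is what your oscillation estimate in $\delta=\rho(\xi^{-1}\cdot\xi')$ in fact produces.
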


Now, we define the Adams functions. Let $B:=B(0,1)$ denote the unit ball in $\H^4$ and $B_{\ell}=B(0,\ell)$ denotes the ball with center $0$ and radius $\ell.$
We have the following result.
\begin{lemma}\cite{lak}
For all $\ell\in (0,1)$ there exists $U_\ell \in D:=\{u\in D_0^{2,2} (B): u\mid_{B_\ell}=1\},$
such that
\[\norm{U_\ell}=C(B_\ell, B)\leq \frac{A }{Q \log\left(\frac{1}{\ell}\right)},\]
where $Q=2n+2$ is homogeneous dimension of Heisenberg group $\H^n$ and  $C(K,E)$ denote the conductor capacity of $K$ in $E,$ whenever $E$ is an open set and $K$ a relatively compact subset of $E,$ which is defined as follows:
\[C(K,E):= \inf\{\norm{\Delta_{\mathbb{H}^n} u}_2^2: u\in C_0^\infty(E), \, u\mid _{K} =1\}.\]
\end{lemma}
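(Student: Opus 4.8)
The plan is to regard $C(B_\ell,B)$ as an infimum and prove the estimate by producing a near optimal competitor, reading the constant $A=\frac{Q}{c_0\gamma_1^2}$ off the representation formula \eqref{rep}. First I would note that the admissible class $D$ is a nonempty, closed, convex subset of the Hilbert space $D_0^{2,2}(B)$ and that $u\mapsto\int_B|\Delta_{\H}u|^2\,d\xi$ is convex, coercive and weakly lower semicontinuous; the direct method then produces a minimizer $U_\ell\in D$ with $\norm{U_\ell}^2=C(B_\ell,B)$. Since every $u\in D$ satisfies $u\equiv 1$ on $B_\ell$ and $u\in C_0^\infty(B)$, its Heisenberg Laplacian vanishes on $B_\ell$ and is supported in $\overline{B}$; hence $\Delta_{\H}u$ is supported in the annulus $B\setminus B_\ell$. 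It therefore suffices to construct one admissible function, supported in this way, whose energy is at most $\frac{A}{Q\log(1/\ell)}$.

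The mechanism that forces the constant $c_0=\int_\Sigma d\mu$ (rather than the larger angular constant $\int_\Sigma|z|^4\,d\mu$ that a naive gauge-radial profile would produce) is to prescribe $\Delta_{\H}U_\ell=g(\rho)$ as a function of the gauge alone. With the polar decomposition $d\xi=\rho^{Q-1}\,d\rho\,d\mu$ one then gets $\norm{\Delta_{\H}U_\ell}_2^2=c_0\int_0^\infty g(r)^2 r^{3}\,dr$, while \eqref{rep} evaluated at the origin reads $1=U_\ell(0)=-\gamma_1 c_0\int_0^\infty g(r)\,r\,dr$. Minimizing the quadratic energy under this single linear constraint, with $g$ supported in $[\ell,1]$, gives the Euler--Lagrange profile $g(r)\propto r^{2-Q}=r^{-2}$, and the decisive computation is
\[
\int_{\{\ell<\rho<1\}}\rho^{-4}\,d\xi=c_0\int_\ell^1 r^{Q-5}\,dr=c_0\log\tfrac{1}{\ell},
\]
where the logarithm appears precisely because $Q=4$. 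Fixing the multiplicative constant by the normalization then yields $\norm{\Delta_{\H}U_\ell}_2^2=\frac{1}{\gamma_1^2 c_0\log(1/\ell)}=\frac{A}{Q\log(1/\ell)}$. Applying Cauchy--Schwarz to the same identity $1=U_\ell(0)$ on the annulus gives the reverse bound $\norm{\Delta_{\H}u}_2^2\ge\frac{A}{Q\log(1/\ell)}$ for every $u\in D$, which both certifies that the constant is sharp and explains why the extremal density is $\rho^{-2}$.

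The hard part will be upgrading this competitor to a genuinely admissible function without degrading the sharp constant. The profile above only enforces $U_\ell(0)=1$, and since $\Delta_{\H}U_\ell\equiv 0$ on $B_\ell$ the function is merely $\Delta_{\H}$-harmonic there, not identically $1$; moreover $U_\ell$ must be compactly supported in $B$ with $U_\ell=0$ and $\nh U_\ell=0$ on $\partial B$ in order to lie in $D_0^{2,2}(B)$. I would handle this by solving instead the fourth-order capacitary problem $\Delta_{\H}^2U_\ell=0$ on $B\setminus B_\ell$ with Dirichlet data $U_\ell=1,\ \nh U_\ell=0$ on $\partial B_\ell$ and $U_\ell=0,\ \nh U_\ell=0$ on $\partial B$, and then showing that its energy differs from the value above only by terms of lower order in $\log(1/\ell)$. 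The anisotropy $|\nh\rho|^2=\frac{|z|^2}{\rho^2}$ appearing in $\Delta_{\H}$ acting on gauge functions is the obstruction that makes this matching delicate, and organizing the annular estimates via the dilations $\delta_\la$ and the homogeneity of $\rho$ is where the main effort will go.
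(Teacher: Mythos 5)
The paper offers no proof of this lemma at all: it is stated with the citation \cite{lak} and no argument follows, so there is nothing of the authors' to compare yours against line by line. Judged on its own terms, your proposal correctly isolates where the constant comes from: the representation formula \eqref{rep} evaluated at the origin, polar coordinates $d\xi=\rho^{Q-1}\,d\rho\,d\mu$, and a one-constraint quadratic minimization whose Euler--Lagrange density is $\rho^{2-Q}=\rho^{-2}$; your Cauchy--Schwarz computation giving $\norm{\Delta_{\H}u}_2^2\ge A/(Q\log(1/\ell))$ for every admissible $u$ is correct as far as it goes.

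The gap is that the entire content of the lemma is the \emph{upper} bound on $C(B_\ell,B)$ --- the existence of an admissible competitor with small energy --- and this is exactly the step you defer to ``the main effort.'' Worse, the route you describe cannot close as stated. Equality in your Cauchy--Schwarz step forces $\Delta_{\H}u=c\,\rho^{-2}$ on the annulus with $c$ of one sign, whereas every $u\in C_0^\infty(B)$ (and, by continuity of $v\mapsto\int_B\Delta_{\H}v\,d\xi$ on $D_0^{2,2}(B)$, every $u\in D$) satisfies $\int_{\H}\Delta_{\H}u\,d\xi=0$, since each $X_i,Y_i$ is divergence free; the single-signed density $c\,\rho^{-2}\chi_{\{\ell<\rho<1\}}$ has nonzero total mass, so no admissible function attains equality. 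Combined with your own lower bound this yields $C(B_\ell,B)>A/(Q\log(1/\ell))$ strictly, so the clean inequality you set out to prove is incompatible with your argument: any correction of the extremal density must degrade the constant, and your plan of showing the corrected competitor's energy ``differs only by terms of lower order in $\log(1/\ell)$'' would prove at best $C(B_\ell,B)\le\frac{A}{Q\log(1/\ell)}(1+o(1))$ as $\ell\to0$, which is a different (asymptotic) statement from the lemma, and is in fact the form such capacity estimates take in the Adams-type literature. The anisotropy you flag at the end is a second, independent obstruction you name but do not overcome: since $\Delta_{\H}(f(\rho))=\frac{|z|^2}{\rho^2}\bigl(f''+\frac{Q-1}{\rho}f'\bigr)$, a competitor that is gauge-radial (hence easily constant on $B_\ell$) produces the angular constant $\int_\Sigma|z|^4\,d\mu$ in place of $c_0$, while a competitor with gauge-radial Laplacian is not constant on $B_\ell$; reconciling the two is precisely the unexecuted content. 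As it stands the proposal establishes an (unstated) lower bound and the heuristic origin of $A$, but not the lemma.
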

Let $0\in \Om$ and $R\leq \text{dist}(0,\pa\Om)$, the Adams function is defined as follows:
\be\label{adam_func}
\tilde{A}_r(\xi)=\left\{
                      \begin{array}{ll}
                        \sqrt{\frac{Q \log\left(\frac{R}{r}\right)}{A }}U_{r/R}\left(\frac{\xi}{R}\right), & {|\xi|<R;} \\
                        0, & {|\xi|\geq R,}
                      \end{array}
                    \right.
 \ee
where $0<r<R.$
It is easy to check that $\norm{\tilde{A}_r}\leq 1$ and we denote
\[\mathcal{M} =\lim_{k\rar\infty}\int_{\frac{1}{k}\leq |\xi|\leq 1}\exp \left(Q\log k |U_{R/k}(\xi)| )d\xi.\right.\]
We have $\mathcal{M}>0,$ for the details, we refer to \cite{lak}.

Next, we recall decreasing rearrangement of functions on Heisenberg group. For the details about rearrangement on Heisenberg group, we refer to \cite{folland_stein}. Let $\Omega $ be a bounded and measurable subset of $\H^n.$ Let $f:\Omega \rar \R$ be a measurable function. For $t\in \R,$ the level set $\{f>t\}$ is defined as
\[\{f>t\}=\{\xi\in\Om : f(\xi)>t\}.\]
Sets $\{f<t\},\,\{f\geq t\}$ and $\{f=t\}$ can be defined in an analogous way.
\begin{definition}[Distribution Function]
Let $f:\Omega \rar \R$ be a measurable function then distribution function of $f$ is given by
\[\lambda_{f}(t)=|\{f>t\}|,\]
where $|A|$ denotes the Lebesgue measure of the set $A.$
\end{definition}
It is easy to see that distribution function is a monotonically decreasing function of $t$ and
\[\lambda_f(t)=\begin{cases}
0, & t\geq ess\sup(f),\\
|\Om|, & t\leq ess\inf(f).
\end{cases}
\]

 Thus the range of $\lambda_f$ is the interval $[0\,, |\Om|].$
\begin{definition}[Decreasing Rearrangement]
Let $\Omega\subset \H^n$ be bounded and let $f:\Omega \rar \R$ be a measurable function. Then the decreasing rearrangement of $f$ is defined as
\[f^\ast (0)=ess\sup(f),\]
\[f^\ast(s) =\inf\{t:\lambda_f(t)<s\},\,\, s>0.\]
\end{definition}
\begin{lemma}\label{rearrange1}
Let $\Omega\subset \H^n$ be bounded and let $f:\Omega \rar \R$ be a measurable function. Then for $0<p<\infty,$
\[\int_\Om |f(\xi)|^p d\xi=\int_0^{|\Om|} |f^\ast(t)|^p dt.\]
\end{lemma}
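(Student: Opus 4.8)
The plan is to reduce the whole identity to the single fact that $f$ and its decreasing rearrangement $f^\ast$ are \emph{equimeasurable}, and then to invoke the layer-cake (Cavalieri) representation of the $L^p$ norm. Indeed, the identity is an instance of the general principle that two equimeasurable functions assign the same value to $\int\Phi(\cdot)$ for every nonnegative Borel function $\Phi$; here one takes $\Phi(x)=|x|^p$. So the work splits into (a) proving $\lambda_{f^\ast}=\lambda_f$, and (b) a routine layer-cake computation.

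First I would establish the key equimeasurability statement, namely that for every $t$,
\[
|\{s\in(0,|\Om|): f^\ast(s)>t\}| = \lambda_f(t).
\]
The definition $f^\ast(s)=\inf\{t:\lambda_f(t)<s\}$ exhibits $f^\ast$ as a generalized inverse of the non-increasing function $\lambda_f$, which is right-continuous: this follows from continuity of Lebesgue measure along the increasing union $\{f>t\}=\bigcup_{s>t}\{f>s\}$. Rather than prove a clean strict equivalence (the endpoints make this delicate), I would prove the sandwich
\[
(0,\lambda_f(t))\subseteq \{s: f^\ast(s)>t\}\subseteq (0,\lambda_f(t)],
\]
so that the super-level set of $f^\ast$ differs from the interval $(0,\lambda_f(t))$ by at most the single point $\lambda_f(t)$ and hence has Lebesgue measure exactly $\lambda_f(t)$. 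For the left inclusion, if $s<\lambda_f(t)$ then right-continuity gives $\var>0$ with $\lambda_f(t+\var)>s$, so every $\tau\le t+\var$ lies outside $\{\tau:\lambda_f(\tau)<s\}$, forcing $f^\ast(s)\ge t+\var>t$. For the right inclusion, if $f^\ast(s)>t$, pick $t'\in(t,f^\ast(s))$; then $t'\notin\{\tau:\lambda_f(\tau)<s\}$, so $\lambda_f(t')\ge s$, and monotonicity yields $\lambda_f(t)\ge\lambda_f(t')\ge s$. This is the step I expect to be the main obstacle: keeping careful track of which inequalities are strict and treating the endpoints of the $\inf$-definition correctly, all of which rest on the right-continuity of $\lambda_f$.

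Finally, having obtained $\lambda_{f^\ast}=\lambda_f$, I would conclude via layer-cake. Since equal distribution functions force $|\{f\le t\}|=|\{f^\ast\le t\}|$ (complements within total measure $|\Om|$), and hence $|\{f<-t\}|=|\{f^\ast<-t\}|$ after passing to the limit, the absolute values $|f|$ and $|f^\ast|$ are equimeasurable as well, i.e. $|\{|f|>t\}|=|\{|f^\ast|>t\}|$ for all $t\ge 0$. Applying Tonelli's theorem to $\int_\Om\int_0^{|f(\xi)|}p\,t^{p-1}\,dt\,d\xi$ gives the representation
\[
\int_\Om |f(\xi)|^p\,d\xi=\int_0^\infty p\,t^{p-1}\,|\{|f|>t\}|\,dt,
\]
and the same formula holds for $f^\ast$ on $(0,|\Om|)$. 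Because the two right-hand sides have identical integrands, comparing them yields the claimed identity $\int_\Om |f|^p=\int_0^{|\Om|}|f^\ast|^p$.
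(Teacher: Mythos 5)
The paper offers no proof of this lemma at all---it simply refers the reader to Chapter 1 of Folland--Stein---so there is nothing to diverge from; your argument is the standard one that such references give. Your proof is correct and complete: the sandwich $(0,\lambda_f(t))\subseteq\{s:f^\ast(s)>t\}\subseteq(0,\lambda_f(t)]$ via right-continuity of $\lambda_f$ correctly establishes equimeasurability of $f$ and $f^\ast$ (and your extra care in passing from $f$ to $|f|$ is genuinely needed here, since the paper rearranges $f$ itself rather than $|f|$), after which the layer-cake identity finishes the job.
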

\begin{proof} For a proof, we refer to Chapter 1 \cite{folland_stein}.
\end{proof}
\begin{lemma}[Hardy-Littlewood inequality]\label{hardy_littlewood}
Let $\Omega\subset \H^n$ be bounded and let $f,g:\Omega \rar \R$ be a measurable functions. Then
\[\int_\Om |f(\xi)g(\xi)|d\xi\leq \int_0^{|\Om|} f^\ast(t)g^\ast(t)dt.\]
\end{lemma}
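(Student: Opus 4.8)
The plan is to reduce the statement to the nonnegative case and then exploit the classical layer-cake (Cavalieri) representation together with the defining property of the decreasing rearrangement. Since only $|f|$ and $|g|$ enter the left-hand side, I would first replace $f$ and $g$ by $|f|$ and $|g|$ and assume throughout that both functions are nonnegative, so that for every $\xi$ one has the Cavalieri identity $f(\xi)=\int_0^\infty \chi_{\{f>s\}}(\xi)\,ds$, and likewise for $g$, $f^\ast$ and $g^\ast$. The whole argument then rests on two properties of the rearrangement. The first is \emph{equimeasurability}: the function $f^\ast$ on $[0,|\Om|]$ has the same distribution function as $f$, that is $|\{t\in[0,|\Om|]:f^\ast(t)>s\}|=\lambda_f(s)$ for all $s$, which follows directly from the definition $f^\ast(s)=\inf\{t:\lambda_f(t)<s\}$ (this is exactly the mechanism behind Lemma \ref{rearrange1}). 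The second is \emph{monotonicity}: since $f^\ast$ is nonincreasing on $[0,|\Om|]$, its super-level set $\{f^\ast>s\}$ is, up to a null set, an interval of the form $[0,\lambda_f(s))$ anchored at the origin.

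With these in hand I would apply Tonelli's theorem to both sides (legitimate since all integrands are nonnegative). On the left, inserting the Cavalieri representations of $f$ and $g$ and exchanging the order of integration gives
\[
\int_\Om |fg|\,d\xi=\int_0^\infty\!\!\int_0^\infty \big|\{f>s\}\cap\{g>\tau\}\big|\,ds\,d\tau,
\]
and the integrand is bounded above by $\min(\lambda_f(s),\lambda_g(\tau))$ simply because the intersection of two sets cannot exceed either one. Performing the identical manipulation on the right-hand side yields
\[
\int_0^{|\Om|} f^\ast(t)g^\ast(t)\,dt=\int_0^\infty\!\!\int_0^\infty \big|\{f^\ast>s\}\cap\{g^\ast>\tau\}\big|\,ds\,d\tau.
\]

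The decisive step, which I expect to be the crux, is to evaluate the right-hand integrand exactly. By the monotonicity property the two super-level sets $\{f^\ast>s\}$ and $\{g^\ast>\tau\}$ are nested intervals both beginning at $0$, so their intersection is the shorter of the two and has measure $\min(|\{f^\ast>s\}|,|\{g^\ast>\tau\}|)$; by equimeasurability this equals $\min(\lambda_f(s),\lambda_g(\tau))$. Hence for every pair $(s,\tau)$ the right-hand integrand \emph{equals} the upper bound found for the left-hand integrand, and integrating the pointwise inequality $|\{f>s\}\cap\{g>\tau\}|\le\min(\lambda_f(s),\lambda_g(\tau))=|\{f^\ast>s\}\cap\{g^\ast>\tau\}|$ over $(s,\tau)\in(0,\infty)^2$ delivers the claim. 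The only genuinely delicate point is the anchoring of the super-level sets at the origin: this is where the monotonicity of the rearrangement is used, and it is precisely the feature that makes the rearranged product extremal. Everything else — the Cavalieri identities, the application of Tonelli, and the reduction to $f,g\ge 0$ — is routine. I would also note that equimeasurability is exactly what licenses passing between the Heisenberg measure on $\Om$ and Lebesgue measure on $[0,|\Om|]$, so that no structure of $\H^n$ beyond the finiteness of $|\Om|$ is actually needed.
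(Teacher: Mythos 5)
Your argument is correct, and it is the standard layer-cake proof of the Hardy--Littlewood rearrangement inequality: reduce to $f,g\ge 0$, write both sides as double integrals of measures of intersections of super-level sets via Cavalieri and Tonelli, bound $|\{f>s\}\cap\{g>\tau\}|$ by $\min(\lambda_f(s),\lambda_g(\tau))$, and observe that equality holds on the rearranged side because the super-level sets of $f^\ast$ and $g^\ast$ are nested intervals anchored at the origin. The paper itself offers no argument at all here --- it simply refers the reader to Chapter 1 of Folland--Stein --- so your write-up supplies a complete, self-contained proof where the paper gives only a citation; the proof in that reference is essentially the one you describe, and as you correctly note, nothing about the Heisenberg group enters beyond the fact that $(\Om,d\xi)$ is a finite measure space. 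Two small points of care. First, the identity $\{t:f^\ast(t)>s\}=[0,\lambda_f(s))$ uses the right-continuity of $\lambda_f$ (which holds since $\{f>t\}=\bigcup_n\{f>t+1/n\}$), and the paper's convention $f^\ast(s)=\inf\{t:\lambda_f(t)<s\}$ differs from the more common $\inf\{t:\lambda_f(t)\le s\}$ only on an at most countable set, so your ``up to a null set'' caveat is exactly right. Second, your initial replacement of $f,g$ by $|f|,|g|$ is not merely cosmetic: the paper defines $\lambda_f$ without absolute values, and for sign-changing $f$ the stated inequality would actually fail under that literal reading (take $f\equiv-1$, $g\equiv 1$); your reduction silently adopts the standard convention that $f^\ast$ is the rearrangement of $|f|$, which is the convention under which the lemma is true and under which it is used later in the paper.
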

\begin{proof} For a proof, we refer to Chapter 1 \cite{folland_stein}.
\end{proof}

The function $f^{\ast\ast}$ on $(0,\infty)$ is defined as
\[f^{\ast\ast}(t)=\frac{1}{t}\int_0^\infty f^\ast(s)ds.\]
Next, we state Vitali's convergence theorem. We refer to \cite{rudin} for the proof.
\begin{theorem}[Vitali's convergence theorem] Let $(X,\mathcal{F},\mu)$ be a measure space such that $\mu(X)<\infty.$ Suppose
\begin{enumerate}[(i)]
 \item $\{f_n\}$ is uniformly integrable,
 \item $f_n(x) \rar f(x) \text{ a.e. as } n\rar\infty$,
 \item $|f(x)|<\infty,$ a.e. in $X,$
\end{enumerate}
then $f\in L^1(X,\mu)$ and
\[\lim_{n\rar\infty} \int_X |f_n-f| d\mu =0.\]
\end{theorem}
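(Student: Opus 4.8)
The plan is to establish the two conclusions in turn: first that $f \in L^1(X,\mu)$, and then that $\int_X |f_n - f|\, d\mu \to 0$. Throughout I will use the form of uniform integrability appropriate to a space of finite measure, namely $\sup_n \int_X |f_n|\, d\mu < \infty$ together with \emph{uniform absolute continuity}: for every $\var > 0$ there is a $\delta > 0$ such that $\mu(E) < \delta$ forces $\int_E |f_n|\, d\mu < \var$ for all $n$. (On a space with $\mu(X)<\infty$ this is equivalent to the single condition $\lim_{M\to\infty}\sup_n \int_{\{|f_n|>M\}}|f_n|\,d\mu = 0$, and I would record that equivalence first if the latter is taken as the definition.)

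For the integrability of $f$ I would apply Fatou's lemma to $|f_n| \to |f|$ (valid a.e.\ by (ii) and continuity of the absolute value): this gives $\int_X |f|\, d\mu \le \liminf_n \int_X |f_n|\, d\mu \le \sup_n \int_X |f_n|\, d\mu < \infty$, so $f \in L^1(X,\mu)$; hypothesis (iii) guarantees that $f$ is genuinely real-valued a.e.

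For the $L^1$-convergence the crucial device is \emph{Egorov's theorem}, and this is exactly where the hypothesis $\mu(X) < \infty$ enters. Fix $\var > 0$. Using uniform absolute continuity of the family, together with the (ordinary) absolute continuity of the integral of the single function $|f| \in L^1$, I would choose $\delta > 0$ so small that $\mu(E) < \delta$ forces both $\int_E |f_n|\, d\mu < \var$ for all $n$ and $\int_E |f|\, d\mu < \var$. By Egorov's theorem there is a measurable set $E$ with $\mu(X\setminus E) < \delta$ on which $f_n \to f$ uniformly. Splitting
\[
\int_X |f_n - f|\, d\mu = \int_{E}|f_n-f|\,d\mu + \int_{X\setminus E}|f_n-f|\,d\mu,
\]
I estimate the two pieces separately. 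On $E$, uniform convergence gives $\int_E |f_n - f|\, d\mu \le \mu(X)\,\sup_E |f_n - f| \to 0$ as $n \to \infty$. On $X\setminus E$, the triangle inequality and the choice of $\delta$ yield $\int_{X\setminus E}|f_n - f|\, d\mu \le \int_{X\setminus E}|f_n|\,d\mu + \int_{X\setminus E}|f|\,d\mu < 2\var$ for every $n$. Hence $\limsup_{n\to\infty}\int_X |f_n - f|\, d\mu \le 2\var$, and letting $\var \to 0$ finishes the argument.

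The main obstacle is the simultaneous control of $\int_{X\setminus E}|f_n|\,d\mu$ over all $n$ on the exceptional set produced by Egorov's theorem; this is precisely what uniform integrability supplies and what distinguishes the result from a naive appeal to dominated convergence (no dominating function is assumed). A secondary point requiring care is the order of quantifiers: the set $E$ depends on $\var$ through $\delta$, so one must fix $\var$ (and hence $E$) first and only afterwards send $n \to \infty$.
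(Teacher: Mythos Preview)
Your proof is correct and is the standard textbook argument via Egorov's theorem. Note, however, that the paper does not actually give its own proof of this statement: Vitali's convergence theorem is quoted as a classical tool and the authors simply refer to Rudin \cite{rudin} for the proof. So there is nothing in the paper to compare against beyond that citation; your argument is essentially the one found in Rudin and would be perfectly acceptable here.
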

\begin{theorem}[Converse of Vitali's theorem]
Let $(X,\mathcal{F},\mu)$ be a measure space such that $\mu(X)<\infty.$ Let $f_n\in L^1(X,\mu)$ and
\[\lim_{n\rar \infty} \int_E f_n d\mu\]
exists for every $E\in \mathcal{F},$ then $\{f_n\}$ is uniformly integrable.
\end{theorem}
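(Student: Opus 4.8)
The plan is to recognize this statement as (a special case of) the Vitali--Hahn--Saks theorem and to prove it by a Baire category argument. First I would reformulate the hypothesis measure-theoretically: for each $n$ define the finite signed measure $\nu_n(E)=\int_E f_n\,d\mu$. Since $f_n\in L^1(X,\mu)$, each $\nu_n$ is absolutely continuous with respect to $\mu$, and the standing assumption is precisely that $\lim_{n\rar\infty}\nu_n(E)$ exists for every $E\in\mathcal F$. The goal then becomes showing that the family $\{\nu_n\}$ is \emph{uniformly} absolutely continuous, that is, for every $\varepsilon>0$ there is $\delta>0$ with $\mu(E)<\delta\implies\sup_n|\nu_n(E)|<\varepsilon$, and afterwards upgrading this to uniform integrability of $\{f_n\}$.

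For the main step I would work on the metric space $\mathcal M$ of equivalence classes of sets in $\mathcal F$ (identifying sets that differ by a $\mu$-null set) equipped with $d(E,F)=\mu(E\triangle F)$; since $\mu(X)<\infty$, this is a complete metric space, and each map $E\mapsto\nu_n(E)$ is continuous on $\mathcal M$ because $\nu_n\ll\mu$. Fix $\varepsilon>0$ and set
\[
\mathcal M_N=\bigcap_{m,k\ge N}\{E\in\mathcal M:|\nu_m(E)-\nu_k(E)|\le\varepsilon\}.
\]
Each $\mathcal M_N$ is closed (a countable intersection of closed sets, by continuity of the $\nu_n$), and since $\nu_n(E)$ converges it is Cauchy for each fixed $E$, so $\mathcal M=\bigcup_N\mathcal M_N$. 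By the Baire category theorem, some $\mathcal M_{N_0}$ has an interior point: there exist $E_0$ and $r>0$ with the open ball $B(E_0,r)\subseteq\mathcal M_{N_0}$.

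The decisive trick is then to transport this local control near $E_0$ into control near $\emptyset$. For any $A$ with $\mu(A)<r$, both $E_0\cup A$ and $E_0\setminus A$ lie in $B(E_0,r)$, and the additivity identity $\nu_n(A)=\nu_n(E_0\cup A)-\nu_n(E_0\setminus A)$ yields $|\nu_m(A)-\nu_k(A)|\le2\varepsilon$ for all $m,k\ge N_0$. Combining this with the individual absolute continuity of the finitely many measures $\nu_1,\dots,\nu_{N_0}$ produces a single $\delta>0$ such that $\mu(A)<\delta$ forces $\sup_n|\nu_n(A)|\le3\varepsilon$, which is the desired uniform absolute continuity.

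Finally I would pass from signed integrals to absolute integrals and then to uniform integrability. Given $A$ with $\mu(A)<\delta$, splitting $A$ into $A\cap\{f_n>0\}$ and $A\cap\{f_n\le0\}$ (each still of measure $<\delta$) gives $\int_A|f_n|\,d\mu\le 6\varepsilon$ uniformly in $n$. Covering $X$ by finitely many sets of measure $<\delta$ (possible since $\mu(X)<\infty$) bounds $\sup_n\norm{f_n}_{L^1}$, and Chebyshev's inequality gives $\mu(\{|f_n|>M\})\le\norm{f_n}_{L^1}/M$; choosing $M$ so large that this is $<\delta$ for every $n$ yields $\sup_n\int_{\{|f_n|>M\}}|f_n|\,d\mu<\varepsilon$, i.e. uniform integrability. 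The main obstacle is the Baire category step: one must carefully verify completeness of $(\mathcal M,d)$ and continuity of each $\nu_n$, since the whole argument hinges on extracting the interior ball $B(E_0,r)$, together with the combinatorial set manipulation that relays control from $E_0$ back to arbitrary small sets.
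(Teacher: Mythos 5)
The paper offers no proof of this statement at all --- it simply refers the reader to Rudin --- and your Baire-category argument on the complete metric space $(\mathcal M,d)$ of measurable sets modulo null sets is exactly the standard (Vitali--Hahn--Saks) proof that the cited reference intends: the decomposition $\nu_n(A)=\nu_n(E_0\cup A)-\nu_n(E_0\setminus A)$ transporting the interior ball to a neighborhood of $\emptyset$ is the key step, and you carry it out correctly through the uniform absolute continuity $\mu(A)<\delta\Rightarrow\sup_n\int_A|f_n|\,d\mu\le 6\varepsilon$. The only blemish is the final upgrade: a finite measure space with an atom of measure $\ge\delta$ cannot be covered by finitely many sets of measure $<\delta$, so the claimed $L^1$-bound does not follow that way; but this last step is superfluous, since uniform integrability in the sense of the cited reference is precisely the uniform absolute continuity you have already established (and the $L^1$-bound, if wanted, follows instead from the Nikodym boundedness theorem).
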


Let \[J: D_0^{2,2}(\Omega) \longrightarrow \R\] be a functional defined by
\be\label{func}
J(u)=\frac{1}{2} \int_\Om |\Delta_{\mathbb{H}^n} u|^2 dx -\int_\Om \frac{F(\xi,u)}{\rho(\xi)^a} dx ,
\ee
where $\displaystyle F(\xi,u)=\int_0^u f(\xi,s) ds. $
Throughout this article, we denote $\|{\cdot}\|_{D_0^{2,2}(\Omega)}$ by $\|{\cdot}\|$ and $||\cdot||_p$ denotes the standard $L^p$-norm.

 \section{Proof of Theorem \ref{adams_ineq} and Theorem \ref{sing_adams}}
 In order to prove Theorem \ref{adams_ineq} and Theorem \ref{sing_adams}, we need the following results. In this paper $C$ is some generic constant which may vary from line to line.
 W.S. Kohn and G. Lu \cite{cohn} proved the following theorem:
\begin{theorem}\label{kohn_thm}
Let $\Omega\subseteq \H^n$ be bounded domain and $Q=2n+2$ be a homogeneous dimension of $\H^n.$ Let $0<\alpha<Q,$ $Q-\alpha p=0,$ $\displaystyle p'=\frac{Q}{Q-\alpha}$ and 
\begin{equation}\label{ialpha}
(I_\alpha \ast f)(\xi)=\int_{\H^n} |\xi\cdot\eta^{-1}|^{\alpha-Q} f(\eta) d\eta.
\end{equation}
Then there exists a constant $C$ such that for all $f\in L^p(\H^n)$ with support in $\Omega,$
\[\frac{1}{|\Omega|}\int_\Omega \exp\left(\frac{Q}{c_0}\left|\frac{(I_\alpha\ast f)(\xi)}{\norm{f}_{L^p(\H^n)}}\right|^{p'}\right)d\xi\leq  C,\]
where $\displaystyle c_0=\int_{\Sigma} d\mu,\, \Sigma=\{\xi\in \H^n:|\xi|=1\}.$ Furthermore, if $Q/c_0$ is replaced by a greater number, then the statement is false.
 \end{theorem}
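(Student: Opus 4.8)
The strategy is the rearrangement method of Adams, transplanted to the sub-Riemannian structure of $\H^n$. Write $u=I_\al\ast f$. Since the kernel $|\cdot|^{\al-Q}$ is positive, $|u|\le I_\al\ast|f|$ pointwise, so I may take $f\ge0$; and since the claimed inequality is homogeneous under $f\mapsto\lambda f$ (both $u$ and $\norm{f}_{L^p}$ scale linearly), I normalize $\norm{f}_{L^p(\H^n)}=1$, so that $u=I_\al\ast f$. The first step is to compute the decreasing rearrangement of the kernel $k(\eta)=|\eta|^{\al-Q}$. Using the Folland--Stein polar coordinates $\int_{\H^n}F\,d\eta=\int_0^\infty r^{Q-1}\int_\Si F(\de_r\si)\,d\mu(\si)\,dr$ one gets $|\{|\eta|<\rho\}|=\frac{c_0}{Q}\rho^Q$, whence the distribution function is $\lambda_k(\tau)=\frac{c_0}{Q}\tau^{-p'}$ (recall $p'=\frac{Q}{Q-\al}$), and therefore
\[
k^*(t)=\left(\frac{c_0}{Q}\right)^{1/p'}t^{-1/p'},\qquad k^{**}(t)=p\,k^*(t).
\]
This is exactly where the constant $c_0$, and hence the sharp $Q/c_0$, enters.

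Next I would invoke O'Neil's rearrangement inequality for convolutions, $u^*(t)\le u^{**}(t)\le t\,k^{**}(t)f^{**}(t)+\int_t^\infty k^*(s)f^*(s)\,ds$, to obtain the pointwise bound
\[
u^*(t)\le\left(\frac{c_0}{Q}\right)^{1/p'}\left[p\,t^{-1/p'}\int_0^t f^*(s)\,ds+\int_t^\infty s^{-1/p'}f^*(s)\,ds\right].
\]
Passing to exponential variables $t=|\Om|e^{-y}$, $s=|\Om|e^{-x}$ and setting $\phi(x)=f^*(|\Om|e^{-x})(|\Om|e^{-x})^{1/p}$ (so that $\int_0^\infty\phi^p\,dx=\norm{f}_p^p=1$), the two integrals collapse, using $\frac1p+\frac1{p'}=1$, into $\int_0^\infty a(x,y)\phi(x)\,dx$ with the kernel $a(x,y)=1$ for $0<x<y$ and $a(x,y)=p\,e^{-(x-y)/p'}$ for $x>y$. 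Consequently $\frac{Q}{c_0}u^*(t)^{p'}\le\big(\int_0^\infty a(x,y)\phi(x)\,dx\big)^{p'}$, and by the equimeasurability underlying Lemma~\ref{rearrange1} the target quantity becomes
\[
\frac{1}{|\Om|}\int_\Om\exp\left(\frac{Q}{c_0}|u(\xi)|^{p'}\right)d\xi=\int_0^\infty\exp\left(\frac{Q}{c_0}u^*(|\Om|e^{-y})^{p'}-y\right)dy.
\]

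The final analytic step is Adams' one-dimensional lemma. Since $a(x,y)\le1$ on the diagonal region $0<x<y$ (with equality) and $\sup_y\big(\int_y^\infty a(x,y)^{p'}dx\big)^{1/p'}=p<\infty$, the lemma yields a finite uniform bound on $\int_0^\infty e^{-F(y)}dy$, where $F(y)=y-\big(\int_0^\infty a(x,y)\phi(x)\,dx\big)^{p'}$. The decisive point is that $a\equiv1$ on $0<x<y$ forces, by H\"older, $\big(\int_0^y\phi\big)^{p'}\le y$, so $F\ge0$ at the borderline, and it is precisely this threshold that produces the sharp constant $Q/c_0$; the tail term only affects the value of the finite constant $C$. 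For the sharpness assertion I would test against the concentrating Adams functions built from truncations of the fundamental solution (the $U_\ell$ of the lemma preceding \eqref{adam_func}), i.e.\ take $f$ a suitably scaled multiple of the kernel supported near $0$; a direct computation then shows the integral diverges as soon as $Q/c_0$ is replaced by any larger number.

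The main obstacle I anticipate is the sharp-constant bookkeeping: verifying that the O'Neil bound transplants without loss onto the diagonal kernel $a\equiv1$, so that no spurious factor contaminates $Q/c_0$, and rigorously justifying both O'Neil's lemma and the polar-coordinate rearrangement identities in the Heisenberg setting, where $|\cdot|$ is only a homogeneous gauge rather than a Riemannian distance. The sharpness half is equally delicate, requiring explicit extremal sequences and a careful control of the divergent tail contribution.
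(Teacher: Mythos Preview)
The paper does not actually prove this theorem: it is quoted verbatim from Cohn and Lu \cite{cohn} and used as a black box. So there is no ``paper's own proof'' to compare against here. That said, your outline is precisely the Adams rearrangement method that Cohn--Lu employ, and it is also the exact machinery the present paper assembles (Lemma~\ref{ast}, Lemma~\ref{reconv}, Lemma~\ref{gxi}) and deploys to prove the \emph{singular} analogue, Lemma~\ref{sing_conv}. In other words, your proposal reproduces the paper's proof of Lemma~\ref{sing_conv} in the special case $a=0$, which is Theorem~\ref{kohn_thm}. The computation of $k^*$ and $k^{**}$, the O'Neil bound, the exponential change of variables, and the reduction to the one-dimensional Adams lemma with kernel $b(s,t)=1$ on $0<s<t$ and $b(s,t)=p\,e^{(t-s)/p'}$ for $s>t$ all match line for line.

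The one caveat you rightly flag---the validity of O'Neil's convolution inequality for the noncommutative Heisenberg convolution---is glossed over both in your sketch and in the paper, which simply cites \cite{oneil}. O'Neil's original argument goes through because it rests only on the layer-cake decomposition and the fact that Haar measure on $\H^n$ is the Lebesgue measure, so rearrangements behave as in the Euclidean case; but you are correct that this deserves an explicit sentence.
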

 In particular, for $\alpha=2$ and $n=1,$ we get the following corollary:
 \begin{cor}\label{i2exp}
 There exists a constant $C$ such that for all $\Omega\subseteq \H,\, |\Omega|<\infty,$ and for all $f\in L^2(\H)$ with support in $\Omega,$
\[\frac{1}{|\Omega|}\int_\Omega \exp\left(\frac{4}{c_0}\left|\frac{(I_2\ast f)(\xi)}{\norm{f}_{{L^2}(\H)}}\right|^{2}\right)d\xi\leq  C,\]
where $\displaystyle c_0=\int_{\Sigma} d\mu,\, \Sigma=\{\xi\in \H:|\xi|=1\}.$ Furthermore, if $4/c_0$ is replaced by a greater number, then the statement is false.
 \end{cor}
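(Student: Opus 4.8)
The plan is to obtain Corollary \ref{i2exp} as the special case $n=1$, $\alpha=2$ of Theorem \ref{kohn_thm}, so that the entire argument reduces to verifying that this choice of parameters meets the hypotheses of that theorem and reproduces exactly the constants appearing in the statement. No independent estimate is needed.

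First I would set $n=1$, which gives the homogeneous dimension $Q=2n+2=4$. With $\alpha=2$ the requirement $0<\alpha<Q$ becomes $0<2<4$, which holds. Next, the relation $Q-\alpha p=0$ forces $p=Q/\alpha=2$, so the hypothesis $f\in L^p(\H^n)$ of Theorem \ref{kohn_thm} is precisely the assumption $f\in L^2(\H)$ of the corollary, and the Riesz-type operator \eqref{ialpha} specializes to $(I_2\ast f)(\xi)=\int_{\H}|\xi\cdot\eta^{-1}|^{-2}f(\eta)\,d\eta$, matching the object in the corollary.

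The one point that deserves a small check is the exponent. In Theorem \ref{kohn_thm} the conjugate exponent is defined by $p'=\frac{Q}{Q-\alpha}$, which here equals $\frac{4}{4-2}=2$; I would also record that, since $p=Q/\alpha$, this coincides with the usual Hölder conjugate $\frac{p}{p-1}=\frac{2}{1}=2$, so the two defining relations for $p'$ agree and there is no ambiguity in the power. Substituting these values, the constant $\frac{Q}{c_0}$ becomes $\frac{4}{c_0}$, the power $p'$ becomes $2$, and with $\Sigma=\{\xi\in\H:|\xi|=1\}$ the quantity $c_0=\int_{\Sigma}d\mu$ is exactly the one named in the corollary. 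Theorem \ref{kohn_thm} then delivers the displayed inequality verbatim.

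Finally, the sharpness assertion—that $4/c_0$ cannot be replaced by any larger number—is inherited directly from the corresponding sharpness statement in Theorem \ref{kohn_thm}, because replacing $Q/c_0$ by a greater constant there is, under the present specialization, precisely replacing $4/c_0$ by a greater constant here. I do not anticipate any genuine obstacle: the argument is a substitution, and the only care required is the bookkeeping that reconciles the two equivalent expressions for $p'$.
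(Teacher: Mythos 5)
Your proposal is correct and coincides with the paper's own derivation: the authors obtain Corollary \ref{i2exp} precisely by specializing Theorem \ref{kohn_thm} to $n=1$ (so $Q=4$) and $\alpha=2$ (so $p=p'=2$), with the sharpness inherited from the theorem. Your bookkeeping of the constants and exponents is accurate, so nothing further is needed.
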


  \begin{lemma}\label{ast}
  Let $0<\alpha<1,\, 1<p<\infty$ and $b(s,t)$ be a non-negative measurable function  on $(-\infty,\infty)\times \left[\left.0,\infty\right)\right.$ such that almost everywhere,
  \[b(s,t)\leq 1,\,\, \text{when }0<s<t,\]
  \[\sup_{t>0}\left(\int_{-\infty}^0+\int_t^\infty b(s,t)^{p'} ds\right)^{\frac{1}{p'}}=b<\infty.\]
  Then there is a constant $C(p,\alpha)$ such that if for $\phi\geq 0$
  \[\int_{-\infty}^\infty \phi(s)^p ds\leq 1,\]
  then
  \[\int_0^\infty \exp(-F_\alpha(t))dt \leq C,\]
  where
  \[F_\alpha(t)=\alpha t-\alpha\left(\int_{-\infty}^\infty b(s,t)\phi(s) ds\right)^{p'}.\]
  \end{lemma}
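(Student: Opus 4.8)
The plan is to reduce the assertion to a one-dimensional distribution-function estimate and then recover the integral bound by a layer-cake argument. First I would normalize: since $b\ge 0$ and $\phi\ge 0$, replacing $\phi$ by $\phi/\norm{\phi}_p$ can only enlarge $w(t):=\int_{-\infty}^{\infty}b(s,t)\phi(s)\,ds$, hence enlarge the integrand $e^{-F_\alpha(t)}=e^{\alpha(w(t)^{p'}-t)}$, so it suffices to treat $\int_{-\infty}^{\infty}\phi^p\,ds=1$. Next I would split $w(t)=\int_{-\infty}^{0}+\int_{0}^{t}+\int_{t}^{\infty}$. On $(0,t)$ the hypothesis $b\le 1$ together with H\"older gives $\int_{0}^{t}b\phi\,ds\le\int_{0}^{t}\phi\,ds\le\mu(t)^{1/p}t^{1/p'}$, where $\mu(t):=\int_{0}^{t}\phi^p\,ds$; on the complementary set H\"older and the uniform bound $\big(\int_{-\infty}^{0}+\int_{t}^{\infty}b^{p'}\big)^{1/p'}\le b$ give $\int_{(-\infty,0)\cup(t,\infty)}b\phi\,ds\le b\,(1-\mu(t))^{1/p}$. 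This yields the pointwise control
\[
w(t)\le \mu(t)^{1/p}t^{1/p'}+b\,(1-\mu(t))^{1/p},\qquad t>0,
\]
and the problem reduces to the purely one-dimensional estimate $\int_{0}^{\infty}\exp\!\big(\alpha(w(t)^{p'}-t)\big)\,dt\le C(p,\alpha,b)$.

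From this pointwise bound a short computation (using $(1-\bar m)^{1/p}+\bar m/p\le 1$ with $\bar m=1-\mu$) shows that $w(t)^{p'}-t\le C_0(p,b)$ is bounded above, so $e^{\alpha(w^{p'}-t)}$ is bounded; the integrability in $t$ must come from elsewhere. I would extract it from a measure estimate for the superlevel sets: writing $E_\lambda:=\{t>0:\,w(t)^{p'}-t>-\lambda\}$, I would aim to prove
\[
|E_\lambda|\le C_1(p,b)\,(1+\lambda)\qquad\text{for all }\lambda>0 .
\]
Granting this, the layer-cake identity $\int_{0}^{\infty}e^{\alpha(w^{p'}-t)}\,dt=\int_{-\infty}^{\infty}\alpha e^{\alpha u}\,|\{t>0:\,w(t)^{p'}-t>u\}|\,du$, combined with $|E_\lambda|\le C_1(1+\lambda)$ for $u=-\lambda\le 0$ and the uniform bound $C_0$ (which kills the integrand for $u>C_0$), gives $\int_{0}^{\infty}\alpha e^{-\alpha\lambda}C_1(1+\lambda)\,d\lambda<\infty$, which is the desired estimate. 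Note that the scaling constant $\alpha\in(0,1)$ enters only through this last elementary integral.

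The \textbf{main obstacle} is the measure estimate $|E_\lambda|\le C_1(1+\lambda)$, and it is genuinely delicate because the pointwise bound above is \emph{lossy} in the critical regime $\mu(t)\uparrow 1$: for example, when $\phi$ is supported in $[0,T]$ the true value of $w$ stays bounded while $\mu(t)^{1/p}t^{1/p'}$ keeps growing, so no estimate deduced from the pointwise bound \emph{alone} can close the argument. To control $E_\lambda$ I would restrict to $\{t>4\lambda\}$ and split according to which term dominates in the pointwise bound. When the tail/singular term $b(1-\mu(t))^{1/p}$ carries a fixed fraction of $(t-\lambda)^{1/p'}$, the constraint $1-\mu(t)\le 1$ forces $t$ into a bounded interval $t\le T_0(p,b)$, contributing bounded measure. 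On the complementary part, where the local term dominates, I would apply H\"older on $[\lambda,t]$ together with the conserved total mass $\int\phi^p=1$: sustaining $w(t)^{p'}\gtrsim t$, hence $\int_{0}^{t}\phi\gtrsim t^{1/p'}$, over a long range of $t$ either costs a logarithmically growing amount of $\phi^p$-mass (incompatible with unit mass) or, once the mass is nearly exhausted, freezes $\int_0^t\phi$ while the threshold $(t-\lambda)^{1/p'}$ keeps rising, forcing a quick exit from $E_\lambda$.

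Combining the two cases confines the local-dominated part of $E_\lambda$ to length $O(\lambda)$ and gives $|E_\lambda|\le C_1(1+\lambda)$. This final step is the heart of the matter and follows the one-dimensional estimates of Moser and Adams; the normalization, the splitting of $w$, and the layer-cake passage described above are the surrounding bookkeeping.
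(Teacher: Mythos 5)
You should first note that the paper does not actually prove this lemma: its ``proof'' is a two-line citation to Adams (for $\alpha=1$) and to Lam--Lu (for $0<\alpha\le 1$). Your reconstruction follows exactly the architecture of that cited argument --- normalize $\|\phi\|_p=1$, split $w(t)$ into the part on $(0,t)$ controlled by $b\le 1$ plus H\"older and the part on $(-\infty,0)\cup(t,\infty)$ controlled by the hypothesis on $b$, observe that $w(t)^{p'}-t$ is bounded above, and convert everything into the level-set estimate $\bigl|\{t>0:\ w(t)^{p'}-t>-\lambda\}\bigr|\le C_1(1+\lambda)$ via the layer-cake formula. All of that bookkeeping is correct (the upper bound $w^{p'}-t\le C_0(p,b)$ does hold, though it needs a small optimization in $1-\mu$ rather than just the inequality you quote, since the crude bound $w\le t^{1/p'}+b$ only gives $w^{p'}-t\lesssim t^{1/p}$).

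The genuine gap is the measure estimate itself, which you correctly identify as the heart of the matter but do not prove. The specific mechanism you invoke does not close: bounding $\int_0^{t}\phi$ between two points $t_1<t_2$ of $E_\lambda$ by H\"older shows that each fixed multiplicative dilation of $t$ inside $E_\lambda$ costs a fixed (not logarithmically growing) amount of $\phi^p$-mass, and this only confines $E_\lambda$ multiplicatively, i.e.\ it bounds $\sup E_\lambda/\inf E_\lambda$, which is compatible with $|E_\lambda|$ being arbitrarily large when $\inf E_\lambda$ is large (take $\phi$ essentially flat on $[0,T]$ with $T$ huge). The additive bound $|E_\lambda|\le A\lambda+B$ with $A,B$ depending only on $p$ and $b$ is obtained in Adams' Lemma~1 by a finer argument: one takes $t_1=\inf E_\lambda$, $t_2=\sup E_\lambda$, writes the three-term splitting of $w(t_2)$ over $(0,t_1)$, $(t_1,t_2)$ and the complement, applies H\"older for sums to see that the resulting inequality is a near-equality case of H\"older, and exploits the equality deficit (together with the inequality for $t_1$) to bound $t_2-t_1$. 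Without this step --- which is the entire content of the lemma --- your proposal is a correct outline of the known proof rather than a proof.
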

  \begin{proof}
  In  case of $\alpha=1,$ this lemma was proved by D.R. Adams \cite{adams}, which was later modified for the case $0<\alpha\leq1$ by N. Lam and G. Lu \cite{lamlu}. We refer to \cite{adams,lamlu} for the details.
  \end{proof}
  Let $U=f\ast g$ denote the convolution on $\h.$ Then O'Neil \cite{oneil} proved the following lemma:
  \begin{lemma}\label{reconv}
  \[U^\ast(t)\leq U^{\ast\ast}(t)\leq tf^{\ast\ast}(t)g^{\ast\ast}(t)+\int_t^\infty f^\ast(s)g^\ast(s)ds.\]
  \end{lemma}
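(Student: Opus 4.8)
The plan is to prove the two displayed inequalities separately. The first, $U^\ast(t)\le U^{\ast\ast}(t)$, is immediate: since $U^\ast$ is non-increasing, its running average dominates its value, i.e. $U^\ast(t)\le \frac{1}{t}\int_0^t U^\ast(s)\,ds=U^{\ast\ast}(t)$ (recall $U^{\ast\ast}(t)=\frac1t\int_0^t U^\ast(s)\,ds$). For the second inequality I would first rewrite it in the equivalent integrated form
\[
\int_0^t U^\ast(s)\,ds\le\left(\int_0^t f^\ast(s)\,ds\right)\left(\int_0^t g^\ast(s)\,ds\right)+t\int_t^\infty f^\ast(s)g^\ast(s)\,ds,
\]
obtained by multiplying through by $t$ and using $t^2f^{\ast\ast}(t)g^{\ast\ast}(t)=(\int_0^t f^\ast)(\int_0^t g^\ast)$. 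The two basic tools here are the identity $\int_0^t h^\ast(s)\,ds=\sup_{|E|\le t}\int_E|h|$, which follows from the Hardy--Littlewood inequality (Lemma \ref{hardy_littlewood}) applied with the second function $\mathbf 1_E$, and the subadditivity $\big(\sum_i h_i\big)^{\ast\ast}(t)\le\sum_i h_i^{\ast\ast}(t)$ that this representation yields.

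Next I would split $f$ and $g$ at their level values at $t$. Writing $f=f_1+f_2$ with $f_1=(|f|-f^\ast(t))_+\operatorname{sgn} f$ the peak part and $f_2=f-f_1$ the truncation of $f$ at height $f^\ast(t)$, and similarly $g=g_1+g_2$, one checks the rearrangement identities $f_1^\ast(s)=(f^\ast(s)-f^\ast(t))_+$ and $f_2^\ast(s)=\min(f^\ast(s),f^\ast(t))$, since decreasing rearrangement commutes with these non-decreasing truncation maps. Together with Lemma \ref{rearrange1} these give $\|f_1\|_1=\int_0^t(f^\ast(s)-f^\ast(t))\,ds$, $\|f_2\|_\infty\le f^\ast(t)$, the finiteness of $\int_0^\infty f_2^\ast g_2^\ast=t f^\ast(t)g^\ast(t)+\int_t^\infty f^\ast g^\ast$, and the analogous statements for $g$. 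Expanding $U=f\ast g=f_1\ast g_1+f_1\ast g_2+f_2\ast g_1+f_2\ast g_2$ and applying subadditivity reduces the problem to estimating these four pieces.

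The four estimates use different convolution inequalities, all valid on $\h$ because its Haar measure is Lebesgue measure, which is translation invariant and inversion invariant (the group is unimodular). For the peak--peak term I would use Young's inequality $L^1\ast L^1\to L^1$ to get $(f_1\ast g_1)^{\ast\ast}(t)\le\frac1t\|f_1\|_1\|g_1\|_1$; for the mixed terms, Young's $L^1\ast L^\infty\to L^\infty$ gives $(f_1\ast g_2)^{\ast\ast}(t)\le\|f_1\ast g_2\|_\infty\le g^\ast(t)\|f_1\|_1$ and symmetrically for $f_2\ast g_1$; and for the truncation--truncation term the pointwise Hardy--Littlewood bound $|f_2\ast g_2(\xi)|\le\int_0^\infty f_2^\ast(s)g_2^\ast(s)\,ds$ (using that $\eta\mapsto\eta^{-1}\cdot\xi$ is measure preserving) yields $(f_2\ast g_2)^{\ast\ast}(t)\le t f^\ast(t)g^\ast(t)+\int_t^\infty f^\ast g^\ast$. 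Summing the four bounds, the cross terms carrying the factors $f^\ast(t)$ and $g^\ast(t)$ telescope away and one is left with $\frac1t(\int_0^t f^\ast)(\int_0^t g^\ast)+\int_t^\infty f^\ast g^\ast$, which is exactly the integrated form above. I expect the main obstacle to be the peak--peak term $f_1\ast g_1$: estimating it by its sup norm is too lossy and only recovers the weaker bound $U^{\ast\ast}(t)\le\int_0^\infty f^\ast g^\ast$, so the decisive point is to use the $L^1$ estimate there; once that is in place, the cancellation among the remaining terms is routine bookkeeping.
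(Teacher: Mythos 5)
Your argument is correct and is precisely the classical proof of O'Neil's convolution inequality: the paper itself gives no proof of Lemma \ref{reconv}, simply citing O'Neil \cite{oneil}, and your decomposition of $f$ and $g$ into peak and truncated parts, the four Young/Hardy--Littlewood estimates, and the telescoping of the cross terms reproduce that cited argument (valid on $\H^n$ by unimodularity, as you note). No gaps; this matches the source the paper relies on.
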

  Now, we are ready to prove Theorem \ref{adams_ineq}.

\noindent\textbf{Proof of Theorem \ref{adams_ineq}:}
Using  \eqref{rep}, we get
\begin{align}\no
|u(\xi)|&\leq \gamma_1\int_\Omega \De_{\H}u(\eta)|\xi\cdot\eta^{-1}|^{-2} d\eta\\ \no
& \leq \gamma_1 |(I_2\ast \Delta_{\H} u)(\xi)|\,\,(\text{by \eqref{ialpha} with $\alpha=2$})\\ \label{adam1}
|u(\xi)|^{2} &\leq  \gamma_1^{2} |(I_2\ast \Delta_{\H} u)(\xi)|^{2}.
\end{align}
Using Corollary \ref{i2exp} and Equation \eqref{adam1}, we get
 \[\int_\Om \exp\left(A|u(\xi)|^{2}\right) d\xi \leq \int_\Omega \exp\left(A\gamma_1^{2} |(I_2\ast \Delta_{\H} u)(\xi)|^{2}\right)\leq C_0, \]
 provided $\displaystyle A \gamma_1^{2}\leq \frac{4}{c_0},$ i.e. , $\displaystyle A\leq \frac{4}{c_0\gamma_1^{2}}. $
 This completes the first part of the  proof.

 The proof of sharpness of the constant has similar lines  as pp. 393 \cite{adams}, so we omit the details.
 \qed

In order to prove Theorem \ref{sing_adams}, first we prove  auxiliary lemmas, which are used in the proof.

\begin{lemma}\lab{gxi} Let $Q=2n+2$ be homogeneous dimension of $n$-dimensional Heisenberg group $\H^n$ and $g(\xi)=\rho(\xi)^{2-Q},$ then
\begin{equation}\nonumber
 g^\ast(t)=\left(\frac{c_0}{Qt}\right)^{\frac{1}{p'}},
\end{equation}
and
\begin{equation}\nonumber
 g^{\ast\ast}(t)=pg^\ast(t),
\end{equation}
where $\displaystyle\rho(\xi)=|\xi|=(|z|^4+t^2)^{\frac{1}{4}},\, p=\frac{Q}{2},\, p'=\frac{Q}{Q-2}$ and  $c_0$ is defined in \eqref{c0}.
\end{lemma}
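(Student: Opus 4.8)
The plan is to compute the decreasing rearrangement directly from the distribution function, exploiting that $g$ is radial and strictly monotone in the Kor\'anyi gauge. Since $g(\xi)=\rho(\xi)^{2-Q}$ with $2-Q<0$, the super-level set is a gauge ball: $\{g>s\}=\{\xi\in\h:\rho(\xi)<s^{-1/(Q-2)}\}$. First I would record the polar-coordinate identity on $\h$,
\[
\int_{\h} f(\xi)\,d\xi=\int_0^\infty\!\!\int_{\Si} f(\delta_r\omega)\,r^{Q-1}\,d\mu(\omega)\,dr,
\]
with $d\mu$ normalized exactly as in the definition of $c_0=\int_\Si d\mu$. Integrating $f\equiv 1$ over the unit ball gives the volume normalization $|B_{\h}(0,r)|=c_0 r^{Q}/Q$. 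Consequently the distribution function is
\[
\lambda_g(s)=\bigl|B_{\h}\bigl(0,\,s^{-1/(Q-2)}\bigr)\bigr|=\frac{c_0}{Q}\,s^{-\frac{Q}{Q-2}}.
\]

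Next, since $\lambda_g$ is continuous and strictly decreasing on $(0,\infty)$, the rearrangement $g^\ast$ is simply its inverse. Setting $\lambda_g(s)=t$ and solving for $s$, then using $p'=\frac{Q}{Q-2}$ so that $\frac{1}{p'}=\frac{Q-2}{Q}$, I obtain
\[
g^\ast(t)=\left(\frac{c_0}{Qt}\right)^{\frac{1}{p'}},
\]
which is the first claimed formula. For the second identity I would compute $g^{\ast\ast}(t)=\frac1t\int_0^t g^\ast(s)\,ds$ (the intended definition; with $\int_0^\infty$ as literally written the integral diverges, so I read it as $\int_0^t$). Noting that $1-\frac{1}{p'}=\frac{2}{Q}=\frac1p$ with $p=\frac{Q}{2}$, the integral $\int_0^t s^{-1/p'}\,ds$ converges at the origin and equals $p\,t^{1/p}$, whence
\[
g^{\ast\ast}(t)=\frac1t\left(\frac{c_0}{Q}\right)^{\frac{1}{p'}} p\,t^{1/p}=p\left(\frac{c_0}{Qt}\right)^{\frac{1}{p'}}=p\,g^\ast(t).
\]

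The routine part is the two elementary integrals and the algebra of the conjugate exponents. The main obstacle, or rather the only point requiring care, is the normalization in the first step: I must make sure the surface measure $d\mu$ arising in the polar decomposition is precisely the one defining $c_0$, so that $|B_{\h}(0,1)|=c_0/Q$ holds with the stated constant rather than up to an unknown factor. I would pin this down by appealing to the homogeneity of Lebesgue measure under the parabolic dilations $\delta_r$ (degree $Q$) together with the coarea/polar formula on stratified groups from \cite{folland_stein}; once the constant $c_0/Q$ is confirmed, the remaining computation of $g^\ast$ and $g^{\ast\ast}$ follows mechanically.
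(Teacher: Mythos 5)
Your proof is correct and follows essentially the same route as the paper: compute $\lambda_g(s)=\frac{c_0}{Q}s^{-Q/(Q-2)}$ via the polar-coordinate formula on $\H^n$ (the paper cites Proposition 1.15 of \cite{folland_stein}), invert to get $g^\ast$, and then evaluate $\frac{1}{t}\int_0^t g^\ast(s)\,ds$ to obtain $g^{\ast\ast}=pg^\ast$; the paper merely replaces your one-line inversion by two inequalities from the infimum definition of $g^\ast$. Your reading of $g^{\ast\ast}$ as $\frac{1}{t}\int_0^t g^\ast(s)\,ds$ is also the one the paper actually uses in its proof, despite the typo in its preliminary definition.
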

\begin{proof}
We have \[
g^\ast(t)=\inf\{s>0: \lambda_g(s)\leq t\},\]
 where
 \[\lambda_g(s)=|\{\xi\in \Omega: g(\xi)>s\}|.\]
 Now,
 \begin{align}\no
 |\{\xi\in \Omega: g(\xi)>s\}|&=|\{\xi\in \Omega: |\xi|^{2-Q}>s\}|\\ \label{meas}
 &= |\{\xi\in \Omega: |\xi|<s^{-\frac{1}{Q-2}}\}|
 \end{align}
 By using polar coordinates (Proposition 1.15 \cite{folland_stein}), from \eqref{meas}, we obtain
 \begin{align}\no
 \lambda_g(s)&=\int_\Sigma \int_0^{s^{-\frac{1}{Q-2}}} r^{Q-1} dr d\mu, \text{ where $\Sigma$ is defined in \eqref{c0}}\\ \label{ld}
 &= \frac{c_0}{Q}s^{-\frac{Q}{Q-2}}.
 \end{align}
 From \eqref{ld}, we see that,  for any $t>0,$
 \begin{align}\no
 \lambda_g(s)<t &\Rightarrow \frac{c_0}{Q}s^{-\frac{Q}{Q-2}}< t\\ \no
 & \Rightarrow s^{-\frac{Q}{Q-2}} < \frac{Q}{c_0}t \\ \label{ld1}
 & \Rightarrow s> \left(\frac{c_0}{Qt}\right)^{\frac{Q-2}{Q}}=\left(\frac{c_0}{Qt}\right)^{\frac{1}{p'}}
 \end{align}
 From \eqref{ld1}, we obtain
 \be\label{ld2}
g^\ast(t) \geq \left(\frac{c_0}{Qt}\right)^{\frac{1}{p'}}.
\ee
Now, for $\displaystyle s=\left(\frac{c_0}{Qt}\right)^{\frac{1}{p'}},$
\begin{equation}\label{ld3}
\lambda_g(s)=t.
\end{equation}
From \eqref{ld3}, we obtain
\begin{equation}\label{ld4}
g^\ast (t) \leq \left(\frac{c_0}{Qt}\right)^{\frac{1}{p'}}.
\end{equation}
From \eqref{ld2} and \eqref{ld4}, we conclude that
\[g^\ast(t)=\left(\frac{c_0}{Qt}\right)^{\frac{1}{p'}}.\]
Next, we compute $g^{\ast\ast}(t).$
\begin{align*}
g^{\ast\ast}(t)&=\frac{1}{t}\int_0^t g^\ast (s) ds\\
&=\frac{1}{t}\int_0^t \left(\frac{c_0}{Qs}\right)^{\frac{1}{p'}}ds \\
&=\frac{1}{t}\left(\frac{c_0}{Q}\right)^{\frac{1}{p'}}\int_0^t s^{-\frac{1}{p'}} ds\\
&= p\frac{1}{t} \left(\frac{c_0}{Q}\right)^{\frac{1}{p'}} t^{\frac{1}{p}}\\
&=pg^\ast(t).
\end{align*}
This completes the proof.

\end{proof}
  \begin{lemma}\label{sing_conv}
   Let $\Omega\subseteq \H, $ be a bounded domain, and $(I_2\ast f)(\xi)=\int_{\H} |\xi\cdot\eta^{-1}|^{-2} f(\eta) d\eta$. Then there exists a constant $C>0$ such that for all  $f\in L^2(\H)$ with support in $\Omega,$
\[\frac{1}{|\Omega|}\int_\Omega \frac{\exp\left(\frac{4}{c_0}\left(1-\frac{a}{4}\right)\left|\frac{(I_2*f)(\xi)}{\norm{f}_{{L^2}(\H)}}\right|^{2}\right)}{\rho(\xi)^a}\leq  C,\]
where $\displaystyle c_0=\int_{\Sigma} d\mu,\, \Sigma=\{\xi\in \H^n:|\xi|=1\}.$ Furthermore, if $\frac{4}{c_0}\left(1-\frac{a}{4}\right)$ is replaced by a greater number, then the statement no longer holds.
\end{lemma}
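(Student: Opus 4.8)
The plan is to mirror the proof of Theorem \ref{adams_ineq}, inserting the singular weight $\rho(\xi)^{-a}$ through rearrangement so that it contributes exactly the damping factor $(1-\tfrac a4)$. By homogeneity I may assume $\norm{f}_{L^2(\H)}=1$ and abbreviate $U=I_2\ast f=g\ast f$, where $g(\xi)=\rho(\xi)^{-2}=|\xi|^{2-Q}$ (recall $Q=4$), whose rearrangements $g^\ast$, $g^{\ast\ast}=p\,g^\ast$ with $p=p'=2$ are given by Lemma \ref{gxi}. First I would rearrange the weighted integral: writing $h(\xi)=\rho(\xi)^{-a}$ and computing its distribution function exactly as in Lemma \ref{gxi} gives $h^\ast(t)=(c_0/(Qt))^{a/Q}$. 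Since $s\mapsto\exp(A(1-\tfrac a4)s^2)$ is increasing, it commutes with decreasing rearrangement, so the Hardy--Littlewood inequality (Lemma \ref{hardy_littlewood}) reduces the left-hand side to the one-dimensional integral
\[
\int_\Om \frac{\exp\!\big(A(1-\tfrac a4)|U|^2\big)}{\rho^a}\,d\xi \le \int_0^{|\Om|}\exp\!\big(A(1-\tfrac a4)|U^\ast(t)|^2\big)\Big(\frac{c_0}{Qt}\Big)^{a/Q}dt .
\]

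Next I would estimate $U^\ast$ pointwise. O'Neil's lemma (Lemma \ref{reconv}) with $g^{\ast\ast}=p\,g^\ast$ yields $U^\ast(t)\le p\,g^\ast(t)\int_0^t f^\ast(\tau)\,d\tau + \int_t^\infty g^\ast(\tau) f^\ast(\tau)\,d\tau$. I then perform the logarithmic change of variables $t=|\Om|e^{-\zeta}$, $\tau=|\Om|e^{-s}$, and set $\phi(s)=|\Om|^{1/2}e^{-s/2}f^\ast(|\Om|e^{-s})$; the normalization $\norm{f}_2=1$ becomes $\int_0^\infty\phi^2\,ds\le 1$, and crucially $\phi$ is supported in $[0,\infty)$ since $f^\ast$ vanishes beyond $|\Om|$. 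Using $g^\ast(\tau)=(c_0/(Q|\Om|))^{1/2}e^{s/2}$, the O'Neil bound becomes $\sqrt{A}\,U^\ast(|\Om|e^{-\zeta})\le \int_{-\infty}^\infty b(s,\zeta)\phi(s)\,ds$, where $A=4/c_0=Q/c_0$ and
\[
b(s,\zeta)=\begin{cases}0,& s<0,\\ 1,& 0\le s<\zeta,\\ p\,e^{(\zeta-s)/2},& s>\zeta.\end{cases}
\]
The same substitution sends the right-hand side of the displayed inequality to $C(\Om)\int_0^\infty \exp(-F_\alpha(\zeta))\,d\zeta$ with $F_\alpha(\zeta)=\alpha\zeta-\alpha\big(\int b(s,\zeta)\phi(s)\,ds\big)^{p'}$ and $\alpha=1-\tfrac aQ=1-\tfrac a4$; here the factor $e^{a\zeta/Q}$ coming from $h^\ast$ combines with the Jacobian $e^{-\zeta}$ to produce precisely the coefficient $\alpha$, which is the mechanism forcing the sharp constant $\tfrac4{c_0}(1-\tfrac a4)$.

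It then remains to invoke Lemma \ref{ast}. Since $0<a<4$ we have $0<\alpha<1$ and $\int_0^\infty\phi^{p}\,ds\le1$, so I would verify the two hypotheses on $b$: the bound $b\le1$ on $0<s<\zeta$ holds by construction, while $\int_{-\infty}^0 b^{p'}\,ds=0$ and $\int_\zeta^\infty b^{p'}\,ds=p^2\int_\zeta^\infty e^{\zeta-s}\,ds=p^2$ give a finite supremum. Lemma \ref{ast} then bounds $\int_0^\infty\exp(-F_\alpha)\,d\zeta$ by a constant depending only on $p$ and $\alpha$, and dividing by $|\Om|$ yields the asserted bound $C=C(\Om,a)$. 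For the sharpness assertion I would argue exactly as in Theorem \ref{adams_ineq}: testing the supremum against the concentrating family built from the Adams functions $\tilde A_r$ of \eqref{adam_func} (via the representation formula \eqref{rep}, which realizes each $\tilde A_r$ as $I_2\ast f$ for a suitable $f$), a computation as in Adams (pp.\ 393) and Lam--Lu shows that replacing $\tfrac4{c_0}(1-\tfrac a4)$ by any larger number makes the weighted integral diverge as $r\to0$.

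The main obstacle is the verification that $b$ satisfies the tail condition of Lemma \ref{ast}; this is exactly where normalizing the substitution by $|\Om|$ (rather than by $c_0/Q$) is essential, so that $\phi$ is supported on $[0,\infty)$ and the otherwise divergent contribution $\int_{-\infty}^0 b^{p'}\,ds$ vanishes. The second delicate point is extracting the precise factor $(1-\tfrac a4)$ from the interplay between the rearrangement of the weight and the Jacobian, which is what distinguishes this singular estimate from the non-singular Theorem \ref{adams_ineq}.
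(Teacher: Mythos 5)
Your proposal is correct and follows essentially the same route as the paper: O'Neil's rearrangement inequality for the convolution $I_2\ast f=g\ast f$ with $g^\ast,\,g^{\ast\ast}$ from Lemma \ref{gxi}, the Hardy--Littlewood inequality against the rearranged weight $\rho^{-a}$, the logarithmic substitution $t=|\Om|e^{-s}$ with $\phi(s)=|\Om|^{1/2}e^{-s/2}f^\ast(|\Om|e^{-s})$, and then Lemma \ref{ast} with $\alpha=1-\tfrac a4$ and the same kernel $b$. Your explicit verification of the tail condition on $b$ and of where the factor $(1-\tfrac a4)$ arises is if anything slightly more careful than the paper's write-up.
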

\begin{proof}Let \[u(\xi)=(g\ast f)(\xi), \text{ where } \]

\[g(\xi)=\rho(\xi)^{-2}.\]
Then by definition
\[u(\xi)=(I_2\ast f)(\xi)\]

 \noindent and by Lemma \ref{gxi}(with Q=4), we get
\begin{equation}\label{pf1}
 g^\ast(t)=\left(\frac{c_0}{4t}\right)^{\frac{1}{2}},\,
 g^{\ast\ast}(t)=2g^\ast(t).
\end{equation}
By Lemma \ref{reconv}, we get
\begin{align}\no
u^\ast(t)&\leq u^{\ast\ast}(t)\leq tf^{\ast\ast}(t)g^{\ast\ast}(t)+\int_t^{|\Om|} f^\ast(s)g^\ast(s) ds \\ \no
&=t.\frac{1}{t}2g^\ast(t)\int_0^t f^\ast(s)ds+\int_t^{|\Om|} f^\ast(s) \left(\frac{c_0}{4}\right)^{\frac{1}{2}} s^{-\frac{1}{2}}ds\,\, (\text{by \eqref{pf1}})\\ \label{sc1}
& =\left(\frac{c_0}{4}\right)^{\frac{1}{2}}\left(2t^{-\frac{1}{2}}\int_0^tf^\ast(s)ds+\int_t^{|\Om|} s^{-\frac{1}{2}}f^\ast(s) ds\right).
\end{align}
Now, using the change of variables,
\begin{equation}\label{change}
\phi(s)=|\Om|^\frac{1}{2}f^\ast(|\Om|e^{-s})e^{-\frac{s}{2}},
\end{equation}
we get
\begin{align}\no
\int_\Omega (f(x))^2 dx&=\int_0^{|\Om|}(f^\ast(t))^2 dt\\
& =\int_0^\infty (\phi(s))^2 ds.
\end{align}
Let $h(\xi)=\frac{1}{\rho(\xi)},$ then $h^\ast(\xi)=\frac{V}{t}^{\frac{a}{4}},$ where $V$ is volume of unit ball in $\H.$\\
By the Hardy-Littlewood inequality (Lemma \ref{hardy_littlewood}), we obtain
\be\label{hl}
\int_\Om \frac{\exp\left(\left(1-\frac{a}{4}\right)\frac{4}{c_0}|u(\xi)|^{2}\right)}{\rho(\xi)^a} d\xi\leq (V)^{\frac{a}{4}}\int_0^{|\Om|} \frac{\exp{\left(\left(1-\frac{a}{4}\right)\frac{4}{c_0}(u^\ast(t))^{2}\right)}}{t^{\frac{a}{4}}}.
\ee
Let us introduce the change of variable
\[t=|\Omega|e^{-s},\, \text{then } dt=-|\Omega|e^{-s} ds\]
and using this change of variable, we get
\begin{align}\no
&(V)^{\frac{a}{4}}\int_0^{|\Om|} \frac{\exp{\left(\left(1-\frac{a}{4}\right)\frac{4}{c_0}(u^\ast(t))^{2}\right)}}
{t^{\frac{a}{4}}} dt\\ \no
 &=(V)^{\frac{a}{4}}\int_0^\infty\frac{\exp\left(1-\frac{a}{4}\right)\frac{4}{c_0}(u^\ast(|\Om|e^{-s}))^{2}}{(|\Om|e^{-s})^{\frac{a}{4}}} |\Om|e^{-s} ds\\ \no
&\leq(V)^{\frac{a}{4}}|\Om|^{1-\frac{a}{4}} \int_0^\infty \exp\left[\left(1-\frac{a}{4}\right)\left\{p(|\Om| e^{-s})^{-\frac{1}{2}}\int_0^{|\Om|e^{-s}}f^\ast(z) dz+\right.\right.
\\ \no
& \left.\left.\int_{|\Om|e^{-s}}^{|\Om|}f^\ast(z)z^{-\frac{1}{2}} dz\right\}^{2}-\left(1-\frac{a}{4}\right)s\right]ds \,\, (\text{by\,\eqref{sc1}})\\ \no
&=(V)^{\frac{a}{4}}|\Om|^{1-\frac{a}{4}} \int_0^\infty \exp\left[\left(1-\frac{a}{4}\right)\left\{pe^{\frac{s}{2}}\int_s^\infty \phi(w)e^{-\frac{w}{2}} dw+\int_0^s \phi(w)dw\right\}^{2}\right.\\ \no
&\left.-\left(1-\frac{a}{4}\right)s\right]ds\,\,(\text{by using the value of $f^\ast(z)$ from \eqref{change}})\\
&=(V)^{\frac{a}{4}}|\Om|^{1-\frac{a}{4}} \int_0^\infty \exp\left[-F_{\left(1-\frac{a}{4}\right)}(s)\right] ds,
\end{align}
where $F_{1-\frac{a}{4}}(s)$ is as in Lemma \ref{ast} with
\[b(s,t)=\begin{cases}
0 & -\infty <s\leq 0,\\
1 & 0<s<t,\\
2e^{\frac{t-s}{2}} & t<s<\infty.
\end{cases}\]
Since $u(\xi)=(I_2\ast f)(\xi),$ therefore in view of \eqref{hl}, it is enough to show that
\be\label{fr}
\int_0^\infty \phi(s)^2 ds \leq 1 \text{ implies }  \int_0^\infty \exp\left(-F_{\left(1-\frac{a}{4}\right)}(s)\right)ds \leq C.
\ee
\eqref{fr} follows by using Lemma \ref{ast}. This completes the proof.
\end{proof}
\noindent\textbf{Proof of Theorem \ref{sing_adams}:}
Using the Formula \eqref{rep}, we get
\begin{align}\no
|u(\xi)|&\leq \gamma_1\int_\Omega \De_{\H}u(\xi)|\xi\cdot\eta^{-1}|^{-2} d\eta\\ \no
& \leq \gamma_1 |(I_2\ast \Delta_{\H} u)(\xi)|\,\,(\text{by \eqref{ialpha} with $\alpha=2$})\\ \label{adam2}
|u(\xi)|^{2} &\leq  \gamma_1^{2} |(I_2\ast \Delta{\H} u)(\xi)|^{2}.
\end{align}
Using Lemma \ref{sing_conv} and \eqref{adam2}, we get
 \[\int_\Om \frac{\exp\left(A\left(1-\frac{a}{4}\right)|u(\xi)|^{2}\right)}{\rho(\xi)^a}\leq \int_\Om \frac{\exp\left(A\left(1-\frac{a}{4}\right)\gamma_1^{2} |(I_2\ast \Delta_{\H} u)(\xi)|^{2}\right)}{\rho(\xi)^a}\leq C_0, \]
 provided $\displaystyle A\left(1-\frac{a}{4}\right)\gamma_1^{2} \leq \frac{4}{c_0}.$

For the  sharpness of the constant, we refer to \cite{adams}.
 This completes the  proof.
\qed

\section{Proof of Theorems \ref{mainthm_sub}-\ref{mainthmapprox_crit} }
In order to prove Theorem 1.9-1.12, we obtain mountain pass geometry of the associated functional. The following lemmas deal with the geometric requirements of mountain pass theorem.

\begin{lemma}\label{geo1}
 Assume that $f$ satisfies \eqref{subcrit} and suppose (H1)-(H5) hold. Then there exists $\rho>0 $ such that
 \[J(u)>0,\, \, \text{if } \norm{u}=\rho.\]
\end{lemma}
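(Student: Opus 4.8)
The plan is to exploit the strict inequality in (H4) to make the quadratic part of $J$ dominate near the origin, while controlling the nonlinear term by the singular Adams inequality (Theorem \ref{sing_adams}). First I would record that by (H1) one has $F(\xi,u)\ge 0$ for all $u$, and that (H4) furnishes constants $\tau<\La$ and $\delta>0$ with $F(\xi,u)\le \frac{\tau}{2}|u|^2$ whenever $|u|\le\delta$. For $|u|\ge\delta$ I would invoke the subcritical growth \eqref{subcrit}: fixing any $\al>0$ and any exponent $q>2$, the decay of $|f(\xi,u)|/\exp(\al u^2)$ together with the continuity of $f$ on $\bar\Om\times\R$ yields a constant $C=C(\al,q)$ controlling $f$, hence $F$, there. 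Combining the two regimes gives the global pointwise bound
\[
F(\xi,u)\le \frac{\tau}{2}|u|^2+C|u|^q\exp(\al u^2),\qquad (\xi,u)\in\Om\times\R .
\]

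Next I would integrate this estimate against $\rho(\xi)^{-a}$. The first term is handled by the very definition of $\La$ in \eqref{gam}, which gives $\int_\Om \frac{|u|^2}{\rho(\xi)^a}\le \La^{-1}\norm{u}^2$, so its contribution to $J$ is at most $\frac{\tau}{2\La}\norm{u}^2$ and is strictly dominated by $\frac12\norm{u}^2$ because $\tau<\La$. For the exponential term I would apply H\"older's inequality with conjugate exponents $r,r'$, splitting $\rho(\xi)^{-a}=\rho(\xi)^{-a/r'}\rho(\xi)^{-a/r}$, to obtain
\[
\int_\Om \frac{|u|^q\exp(\al u^2)}{\rho(\xi)^a}
\le\left(\int_\Om\frac{|u|^{qr'}}{\rho(\xi)^a}\right)^{1/r'}
\left(\int_\Om\frac{\exp(\al r u^2)}{\rho(\xi)^a}\right)^{1/r}.
\]

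The main point, and the step I expect to require the most care, is keeping the second factor bounded. Writing $u=\norm{u}\,v$ with $\norm{\Dh v}_{2}=1$, the exponent becomes $\al r\norm{u}^2 v^2$; since \eqref{subcrit} lets me choose $\al$ freely, I would fix $\al$ and $r$ first and then restrict to $\norm{u}=\rho$ small enough that $\al r\rho^2\le A\left(1-\frac{a}{4}\right)$, so that Theorem \ref{sing_adams} bounds the second factor by $C_0^{1/r}$. For the first factor I would use that the singular Adams inequality forces the weighted embedding $D_0^{2,2}(\Om)\hookrightarrow L^{p}(\Om,\rho(\xi)^{-a}d\xi)$ for every $p$ (since $|v|^{p}\le C_p\exp(\beta v^2)$ with the right-hand side integrable against $\rho(\xi)^{-a}$ by Theorem \ref{sing_adams}), giving $\int_\Om\frac{|u|^{qr'}}{\rho(\xi)^a}\le C\norm{u}^{qr'}$ and hence $\int_\Om\frac{|u|^q\exp(\al u^2)}{\rho(\xi)^a}\le C\norm{u}^q$. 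Assembling the estimates yields
\[
J(u)\ge \frac12\Big(1-\frac{\tau}{\La}\Big)\norm{u}^2-C\norm{u}^q
=\norm{u}^2\Big[\tfrac12\big(1-\tfrac{\tau}{\La}\big)-C\norm{u}^{q-2}\Big],
\]
and since $q>2$ and $1-\tau/\La>0$, the bracket is strictly positive once $\norm{u}=\rho$ is sufficiently small; choosing such a $\rho$ (compatible with the earlier constraint $\al r\rho^2\le A\left(1-\frac{a}{4}\right)$) completes the proof.
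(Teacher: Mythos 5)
Your argument is correct and follows essentially the same route as the paper: the same decomposition of $F$ via (H4) and the subcritical growth condition, the same use of $\Lambda$ to absorb the quadratic term, and H\"older's inequality combined with the singular Adams inequality for the exponential term, ending with the choice of a small $\rho$. The only (harmless) bookkeeping difference is that you distribute the weight $\rho(\xi)^{-a}$ between both H\"older factors and control $\int_\Om |u|^{qr'}\rho(\xi)^{-a}\,d\xi$ by a weighted embedding deduced from Theorem \ref{sing_adams}, whereas the paper puts the full weight $\rho(\xi)^{-ar}$ into the exponential factor (hence needs $r$ close to $1$ so that $ar<4$) and uses the unweighted embedding $D_0^{2,2}(\Om)\hookrightarrow L^{qr'}(\Om)$ for the remaining factor.
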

\begin{proof}
 By (H4), we have that
 \[\limsup_{s\rar 0^+} \frac{2F(\xi,s)}{|s|^2}< \Lambda,\]
 which by definition is same as
 \be\label{mpg1}
 \inf_{\beta>0}\sup \left\{\frac{2F(\xi,s)}{|s|^2}: 0<s<\beta\right\}<\Lambda.
 \ee
 Since \eqref{mpg1} is strict inequality, therefore, we can choose a number $\tau>0$ such that
 \be\label{mpg2}
 \inf_{\beta>0}\sup \left\{\frac{2F(\xi,s)}{|s|^2}: 0<s<\beta\right\}<\Lambda-\tau.
 \ee
 Since in \eqref{mpg2} infimum is strictly less than $\Lambda-\tau,$ therefore there exists $\delta>0$ such that
 \be
 \sup\left\{\frac{2F(\xi,s)}{|s|^2}: 0<s<\delta\right\}<\Lambda-\tau.
\ee
Thus for $|s|<\delta$
\[\frac{2F(\xi,s)}{|s|^2} < \Lambda-\tau,\]
or
\be\label{mpg3}
F(\xi,s) < \frac{1}{2} (\Lambda-\tau )|s|^2.
\ee
Since $f$ has  subcritical exponential growth therefore there exist constants $C>0$ and
$\gamma>0$ such that
\be\label{growth}
|f(\xi,t)|\leq C \exp(\gamma t^{2}),\, \forall \xi \in\Om,\,\forall t\in \R.
\ee
Thus we have
\begin{align}\nonumber
|F(\xi,s)|&=\left|\int_0^s f(\xi,t) dt \right|\\ \no
&\leq \int_0^s |f(\xi,t)| dt\\ \no
&\leq C\int_0^s  \exp(\gamma t^{2}) dt\,\,\,(\text{by \eqref{growth}})\\ \label{mpg4}
&\leq C\exp(\gamma s^{2}).
\end{align}
Now for $|s|\geq \delta $ and $q> 2,$ there exists a constant $K(\delta, q)$ such that
\be\label{mpg5}
|F(\xi,s)|\leq K |s|^q \exp(\gamma s^{2}),\,\,\,\,\,\forall \,|s|\geq \delta.
\ee
On using \eqref{mpg3} and \eqref{mpg4}, we get
\be\label{mpg6}
F(\xi,s) \leq \frac{1}{2} (\Lambda-\tau)|s|^2+K \exp(\gamma |s|^{2}) |s|^q,
\ee
for all $\xi\in \Om,\, s\in \R$ and for some $\gamma, \tau>0$ and $q>2.$\\
Now consider $r$ and $r^\prime$ such that $\displaystyle \frac{1}{r}+\frac{1}{r^\prime}=1,$ then by H\"{o}lder's inequality, we have
\begin{align}\no
\int_\Om \frac{\exp(\gamma |u|^{2})|u|^q}{\rho(\xi)^a} d\xi &\leq
\left(\int_\Om \frac{\exp(\gamma r |u|^{2})}{\rho(\xi)^{ar}} dx\right)^{\frac{1}{r}}\left(\int_\Om |u|^{qr^\prime} d\xi\right)^{\frac{1}{r^\prime}}\\  \label{mpg7}
&\leq \left(\int_\Omega \frac{\exp\left(\gamma r\norm{u}^{2}
\left(\frac{|u|}{\norm{u}}\right)^{2}\right)}{\rho(\xi)^{ar}}\right)^{\frac{1}{r}}\left(\int_\Om |u|^{qr^\prime}d\xi\right)^{\frac{1}{r^\prime}}.
\end{align}
Now, if we choose $r>1$ sufficiently close to $1,$ so that $ar<4$ and $\norm{u}\leq\sigma$ such that $\gamma r\sigma^2<A\left(1-\frac{a}{4}\right).$
Then by Theorem \ref{sing_adams} and \eqref{mpg7}, we get
\be
\int_\Om \frac{\exp(\gamma |u|^{2})|u|^q}{\rho(\xi)^a} dx \leq C \left(\int_\Om |u|^{qr^\prime}dx\right)^{\frac{1}{r^\prime}}.
\ee
Therefore, we get
\be\label{mpg8}
J(u) \geq \frac{1}{2}\norm{u}^2-\frac{\Lambda-\tau}{2}\int_\Om \frac{|u|^2}{\rho(\xi)^a}dx -C \left(\int_\Om |u|^{r^\prime q}\right)^{\frac{1}{r^\prime}}.
\ee
Now, we have
\be\label{eigen}
\Lambda=\underset{0\neq u\in D_0^{2,2} (\Om)}{\inf} \frac{\norm{u}^2}{\int_\Om \frac{|u|^2}{\rho(\xi)^a}}.
\ee
\eqref{eigen} implies that
\[\Lambda\leq  \frac{\norm{u}^2}{\int_\Om \frac{|u|^2}{\rho(\xi)^a}}\,\,\forall\, 0\neq u\in D_0^{2,2} (\Om)\]
or
\be\label{eigen1}
 \int_\Om \frac{|u|^2}{\rho(\xi)^a} \leq \frac{1}{\Lambda}\norm{u}^2.
 \ee
 On using \eqref{eigen1} in \eqref{mpg8}, we get
 \be\label{mpg9}
 J(u) \geq \frac{1}{2} \norm{u}^2-\frac{\Lambda-\tau}{2\Lambda}\norm{u}^2 - C\norm{u}_{r^\prime q}^q.
 \ee
 Since by  Theorem \ref{embedding}, $D_0^{2,2} (\Om)$ is continuously embedded into $L^s(\Omega),$ for all $1\leq s<\infty.$ Therefore, in particular, for $s=r^\prime q,$ we get
 \be\label{mpg10}
 \norm{u}_{r^\prime q}\leq C\norm{u}.
 \ee
 On using \eqref{mpg10} in \eqref{mpg9}, we get
 \begin{align*}
 J(u) &\geq \frac{1}{2} \left(1-\frac{\Lambda-\tau}{\Lambda}\right)\norm{u}^2-C \norm{u}^q.
 \end{align*}
 Since $\tau>0$ and $q>2,$ choose $\rho>0$ such that
 \[\frac{1}{2} \left(1-\frac{\Lambda-\tau}{\Lambda}\right)\rho-C\rho^{q-1}>0,\]
 then, we have
 \[J(u)\geq \norm{u} \left( \frac{1}{2} \left(1-\frac{\Lambda-\tau}{\Lambda}\right)\norm{u}- C\norm{u}^{q-1}\right)>0,\]
 whenever $\norm{u}=\rho.$
 This completes the proof.
 \end{proof}

\begin{lemma}\label{geo2}
 There exists $e\in D_0^{2,2} (\Omega)$ with $\norm{e} > \rho $ such that
 \[J(e) < \int_{\norm{u}=\rho} J(u).\]
\end{lemma}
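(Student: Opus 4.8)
The plan is to exploit the superquadratic growth forced by the Ambrosetti--Rabinowitz-type hypothesis (H3) to drive $J$ to $-\infty$ along a fixed ray, and then to take $e$ far enough out on that ray. First I would fix any nonnegative $u_0\in C_0^\infty(\Om)$ with $u_0\not\equiv 0$ and study the one-variable function $t\mapsto J(tu_0)$ for $t>0$. By definition,
\[
J(tu_0)=\frac{t^2}{2}\norm{u_0}^2-\int_\Om \frac{F(\xi,tu_0)}{\rho(\xi)^a}\,d\xi.
\]
The quadratic first term is harmless; everything hinges on a good lower bound for $F$.

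The key step is to turn (H3) into a polynomial lower bound. For $s\geq R_0$, (H3) reads $\theta F(\xi,s)\leq s\,\partial_s F(\xi,s)$, and since $F(\xi,s)>0$ there by (H2), dividing and integrating the inequality $\partial_s F/F\geq \theta/s$ from $R_0$ to $s$ yields $F(\xi,s)\geq F(\xi,R_0)(s/R_0)^\theta$. Continuity of $F$ on $\bar\Om\times\R$ together with compactness gives $m_0:=\min_{\xi\in\bar\Om}F(\xi,R_0)>0$, so that, after absorbing the bounded range $0\leq s\leq R_0$ into a constant,
\[
F(\xi,s)\geq C_1 s^\theta-C_2,\qquad \forall\, s\geq 0,\ \forall\,\xi\in\bar\Om,
\]
for some $C_1>0$, $C_2\geq 0$. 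Substituting $s=tu_0(\xi)\geq 0$ and integrating against the weight gives
\[
\int_\Om \frac{F(\xi,tu_0)}{\rho(\xi)^a}\,d\xi\geq C_1 t^\theta\int_\Om \frac{u_0^\theta}{\rho(\xi)^a}\,d\xi-C_2\int_\Om \frac{d\xi}{\rho(\xi)^a}.
\]

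Here I would use $0\leq a<4=Q$ in an essential way: since $\rho^{-a}$ has an integrable singularity at the origin in $\H$ precisely when $a<Q$, both weighted integrals are finite, and the first is strictly positive because $u_0\not\equiv 0$. Writing $D_1:=C_1\int_\Om u_0^\theta\rho^{-a}>0$ and $D_2:=C_2\int_\Om \rho^{-a}\geq 0$, I obtain
\[
J(tu_0)\leq \frac{t^2}{2}\norm{u_0}^2-D_1 t^\theta+D_2.
\]
Since $\theta>2$, the term $-D_1 t^\theta$ dominates and $J(tu_0)\to-\infty$ as $t\to\infty$. By Lemma \ref{geo1} the value $\inf_{\norm{u}=\rho}J(u)$ is a fixed (in fact positive) number, so there is $t^*>0$ large with $J(t^*u_0)<\inf_{\norm{u}=\rho}J(u)$ and simultaneously $\norm{t^*u_0}=t^*\norm{u_0}>\rho$; taking $e:=t^*u_0$ finishes the proof. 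The main obstacle is the derivation of the uniform superquadratic lower bound for $F$: one must combine (H2) and (H3) to integrate the differential inequality, extract a $\xi$-uniform constant via compactness of $\bar\Om$, and then confirm convergence of the resulting weighted integrals---the latter being exactly where the restriction $a<4$ enters. Everything after that is the standard scaling argument.
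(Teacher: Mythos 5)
Your proposal is correct and follows essentially the same route as the paper: both derive the superquadratic lower bound $F(\xi,s)\geq cs^{\theta}-d$ from (H2)--(H3) and then let $J(tu_0)\to-\infty$ along the ray $t\mapsto tu_0$, choosing $e=t^{*}u_0$ with $t^{*}$ large. Your write-up merely supplies the details the paper leaves implicit (integrating the differential inequality from (H3) and checking integrability of $\rho^{-a}$ for $a<4$), which is a welcome but not essentially different elaboration.
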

\begin{proof}
 Let $0\neq u \in D_0^{2,2} (\Omega)$ and $u\geq 0.$ By (H2) and (H3), there exist $c>0$ and $d>0$ such that
 \be
 F(\xi,s) \geq cs^\theta -d,\,\, \forall \, (\xi,s) \in \Omega \times \R^+, \text{ where } \theta >2.
 \ee
 For $t>0,$ we have
 \be\label{mpg11}
 J(tu) \leq \frac{t^2}{2} \int_\Omega |\Delta_{\mathbb{H}} u|^2 d\xi -ct^\theta \int_\Om \frac{u^\theta}{\rho(\xi)^a} dx +d\int_\Omega \frac{1}{\rho(\xi)^a} d\xi.
 \ee
 Since $\theta >2,$ \eqref{mpg11} implies that $J(tu) \rar -\infty $ as $t\rar \infty.$ By setting $e=tu$ with $t$ large enough, we get $\norm{e}>\rho$ and
 \[J(e) < \inf_{\norm{u}=\rho} J(u).\]
 This completes the proof.
 \end{proof}

 \begin{lemma}\label{psc}
 Assume that $f$ satisfies subcritical growth condition \eqref{subcrit}. Then the functional $J$ satisfies Palais-Smale condition at level $c,$ for all $c\in\R.$
 \end{lemma}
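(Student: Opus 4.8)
The plan is to take an arbitrary Palais--Smale sequence $\{u_k\}\subset D_0^{2,2}(\Om)$ at level $c$, that is $J(u_k)\to c$ and $J'(u_k)\to 0$ in the dual of $D_0^{2,2}(\Om)$, and to extract a subsequence converging strongly in $D_0^{2,2}(\Om)$. First I would establish boundedness of $\{u_k\}$. The natural tool is the Ambrosetti--Rabinowitz type hypothesis (H3): for the fixed $\theta>2$ of (H3) consider
\[
J(u_k)-\frac{1}{\theta}J'(u_k)[u_k]=\left(\frac12-\frac1\theta\right)\norm{u_k}^2+\int_\Om\frac{1}{\rho(\xi)^a}\left(\frac{1}{\theta}u_kf(\xi,u_k)-F(\xi,u_k)\right)d\xi.
\]
By (H3) the integrand is nonnegative whenever $|u_k|\ge R_0$ and uniformly bounded on the set $\{|u_k|<R_0\}$, so the integral is bounded below by a fixed constant. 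Since the left-hand side equals $c+o(1)+o(1)\norm{u_k}$ and $\frac12-\frac1\theta>0$, this forces $\norm{u_k}\le M$ for some $M>0$.

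Next, by reflexivity I would pass to a subsequence with $u_k\rightharpoonup u$ weakly in $D_0^{2,2}(\Om)$, and invoke the embedding Theorem \ref{embedding} (which gives $D_0^{2,2}(\Om)\hookrightarrow L^s(\Om)$ for every $s\in[1,\infty)$, compactly) to obtain $u_k\to u$ strongly in every $L^s(\Om)$ and $u_k\to u$ almost everywhere in $\Om$, after a further subsequence.

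The crux of the argument is to prevent the nonlinear term from leaking, namely to prove
\[
\int_\Om\frac{f(\xi,u_k)(u_k-u)}{\rho(\xi)^a}\,d\xi\longrightarrow 0,
\]
and this is exactly where the singular Adams inequality Theorem \ref{sing_adams} enters. Choosing conjugate exponents $r,r'$ with $ar<4$, Hölder's inequality bounds the left-hand side by $\big(\int_\Om \rho(\xi)^{-a}|f(\xi,u_k)|^{r'}d\xi\big)^{1/r'}\big(\int_\Om\rho(\xi)^{-a}|u_k-u|^{r}d\xi\big)^{1/r}$. For the first factor I would use the subcritical growth \eqref{subcrit}: for every $\alpha>0$ there is $C_\alpha$ with $|f(\xi,u_k)|\le C_\alpha\exp(\alpha u_k^2)$, and since $u_k^2\le M^2(|u_k|/\norm{u_k})^2$ with $\norm{\Dh(u_k/\norm{u_k})}_2=1$, choosing $\alpha$ small enough that $\alpha r'M^2< A(1-\frac a4)$ makes the first factor uniformly bounded in $k$ by Theorem \ref{sing_adams} (applied after the standard density extension from $C_0^\infty$ to $D_0^{2,2}$). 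The second factor tends to $0$ by the strong $L^s$ convergence, the weight $\rho(\xi)^{-a}$ being locally integrable since $a<4=Q$. The same kind of estimate, together with Vitali's convergence theorem and the a.e.\ convergence, also yields $\int_\Om\rho(\xi)^{-a}F(\xi,u_k)\,d\xi\to\int_\Om\rho(\xi)^{-a}F(\xi,u)\,d\xi$.

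Finally I would deduce strong convergence from the identity
\[
\norm{u_k-u}^2=J'(u_k)[u_k-u]+\int_\Om\frac{f(\xi,u_k)(u_k-u)}{\rho(\xi)^a}\,d\xi+\int_\Om \Dh u\,\Dh(u-u_k)\,d\xi,
\]
in which the first term vanishes because $J'(u_k)\to 0$ while $\{u_k-u\}$ stays bounded, the second vanishes by the previous step, and the third vanishes by the weak convergence $u_k\rightharpoonup u$. Hence $\norm{u_k-u}\to 0$, which is the desired Palais--Smale condition. The main obstacle is the exponential estimate of the third paragraph: one must keep the exponent strictly below the critical threshold $A(1-\frac a4)$ of Theorem \ref{sing_adams} uniformly along the bounded sequence, and it is precisely the subcriticality \eqref{subcrit} (allowing $\alpha$ to be taken arbitrarily small) that secures this.
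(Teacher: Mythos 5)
Your proposal is correct and follows essentially the same route as the paper: boundedness of the Palais--Smale sequence via the Ambrosetti--Rabinowitz condition (H3), extraction of a weakly convergent subsequence with strong $L^s$ and a.e.\ convergence, and then the key estimate $\int_\Om \rho(\xi)^{-a}f(\xi,u_k)(u_k-u)\,d\xi\to 0$ obtained from H\"older's inequality together with the singular Adams inequality of Theorem \ref{sing_adams}, where subcriticality lets you keep the exponent below the threshold $A(1-\frac a4)$ along the bounded sequence. The only cosmetic differences are that you split the weight $\rho(\xi)^{-a}$ between both H\"older factors (requiring one extra application of H\"older to handle $\int_\Om\rho(\xi)^{-a}|u_k-u|^{r}d\xi$, using $a<4$) whereas the paper absorbs the whole weight $\rho(\xi)^{-ar}$ into the exponential factor, and your closing identity for $\norm{u_k-u}^2$ is an equivalent rearrangement of the paper's $\langle DJ(u_k)-DJ(u),u_k-u\rangle\to 0$.
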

 \begin{proof}
 Let $\{u_k \}\subseteq D_0^{2,2} (\Om)$ ba a PS sequence at level c, that is,
 \be\label{ps1}
 J(u_k)=\frac{1}{2}\norm{u_k}^2-\int_\Omega\frac{F(\xi,u_k)}{\rho(\xi)^a} d\xi \rar c,\,\, \text{as }k\rar \infty
 \ee
 and
 \be\label{ps2}
 |DJ(u_k) v| =\left|\int_\Om \Delta_{\mathbb{H}} u_k\Delta_{\mathbb{H}} v d\xi-\int_\Om \frac{f(\xi,u_k)v}{\rho(\xi)^a}\right| \leq \epsilon_k \norm{v},
 \ee
 where $\epsilon_k\rar 0$ as $k\rar \infty.$
 On taking $v=u_k$ in \eqref{ps2}, we get
 \be\label{ps3}
 |DJ(u_k) u_k| =\left|\int_\Om |\Delta_{\mathbb{H}} u_k|^2d\xi-\int_\Om \frac{f(\xi,u_k)u_k}{\rho(\xi)^a}\right| \leq \epsilon_k \norm{u_k},
 \ee
 On multiplying \eqref{ps1} with $\theta$ and subtracting \eqref{ps3} from it, we get
 \be\label{ps4}
 \left(\frac{\theta}{2}-1\right)\norm{u_k}^2+\int_\Omega \frac{1}{\rho(\xi)^a}(f(\xi,u_k)u_k-\theta F(\xi,u_k)) dx \leq O(1)+\epsilon_k \norm{u_k}.
 \ee
 By (H6), there exist $R_0>0$ and $\theta>2$ such that, for $\norm{u}\geq R_0,$
 \be\label{ps5}
 \theta F(\xi,u) \leq uf(\xi,u).
 \ee

\noindent On using \eqref{ps5}, in \eqref{ps4}, we get
\be\label{ps6}
\left(\frac{\theta}{2}-1\right)\norm{u_k}^2 \leq O(1) +\epsilon_k \norm{u_k}.
\ee
Since $\theta>2,$ \eqref{ps6} shows that $\{u_k\}$ is bounded, therefore, up to a subsequence
\Bea
u_k &\rightharpoonup & u_0\,\,\,\text{in } D_0^{2,2} (\Om)\\
u_k &\rar & u_0 \,\,\,\text{in } L^p(\Om), \, \forall p\geq 1\\
u_k(\xi) &\rar & u_0(\xi)\,\,\text{a.e. in }\Om.
\Eea
Since $f$ has subcritical growth on $\Om,$ therefore there exists a constant $C_k>0$ such that
\be\label{ps7}
f(\xi,s) \leq C_k \exp\left(\frac{A}{2k^2}|s|^{2}\right),\,\,\,\forall\, (\xi,s)\in \Om\times \R.
\ee

Thus
\begin{align}\no
&\left|\int_\Om \frac{f(\xi,u_k)}{\rho(\xi)^a}(u_k-u) d\xi\right|\leq \int_\Om \frac{|f(\xi,u_k)|}{\rho(\xi)^a}|(u_k-u)| dx \\ \no
& \leq \int_\Om C_k \frac{\exp\left(\frac{A}{2k^2}|u_k|^{2}\right)}{\rho(\xi)^a}|u_k-u| d\xi\\ \no
& \leq C\left(\int_\Om \frac{\exp\left(\frac{rA  \norm{u_k}^{2}}{k^2}\frac{|u_k|^{2}}{\norm{u_k}^{2}}\right)}{\rho(\xi)^{ar}}\right)^{\frac{1}{r}}\left(\int_\Om |u_k-u|^{r^\prime}\right)^{\frac{1}{r^\prime}},\\ \no
&\,\,(\text{where $r>1$ and such that $ar<4$ and  } \frac{1}{r}+\frac{1}{r^\prime}=1)\\ \no
& \leq C \norm{u_k-u}_{r^\prime}\,\,\,(\text{by Theorem \ref{sing_adams} })\\
& \rar 0 \text{ as } k\rar \infty.
\end{align}
Similarly, we can show that
\be
\int_\Om \frac{f(\xi,u)}{\rho(\xi)^a}(u_k-u)d\xi\rar 0  \,\,\,\text{as } {k\rar \infty}.
\ee
Also, we have
\[\langle DJ(u_k)-DJ(u), u_k-u \rangle \rar 0, \text{ as } k\rar \infty. \]
Thus $u_k\rar u$ in $D_0^{2,2} (\Om)$. This completes the proof.
\end{proof}
\subsection{Subcritical growth. Proof of Theorem\,\ref{mainthm_sub}}

Using lemmas \ref{geo1}, \ref{geo2} one can show that $J$ satisfies the geometric requirements of mountain pass theorem. Also Lemma \ref{psc}, shows that  $J$ satisfies Palais-Smale conditions. Therefore, we
 conclude the proof of Theorem \ref{mainthm_sub} by applying  mountain pass theorem.
\subsection{The critical growth }
 In this case, we need the following lemma to establish the existence of solution.
\begin{lemma}\label{maxlem}Assume that $f$ satisfies critical exponential growth condition \eqref{crit1} and \eqref{crit2} and suppose (H1)-(H5) hold.
Then there exists $k>0$ such that
\[\max\{J(tA_k): t\geq 0\}< \left(\frac{4-a}{8}\right)\frac{A }{\al_0},\]
where $A_k=\tilde{A}_{R/k}$ is defined by \eqref{adam_func}.
\end{lemma}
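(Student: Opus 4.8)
The plan is to estimate $\max_{t \geq 0} J(tA_k)$ from above by exploiting the precise concentration properties of the Adams test functions $A_k = \tilde{A}_{R/k}$ together with hypothesis (H5). The functional along the ray $t \mapsto J(tA_k)$ is
\[
J(tA_k) = \frac{t^2}{2}\norm{A_k}^2 - \int_\Om \frac{F(\xi, tA_k)}{\rho(\xi)^a}\,d\xi.
\]
Since $\norm{\tilde{A}_r} \leq 1$, the leading quadratic term is controlled by $t^2/2$. The maximum over $t$ is attained at some $t_k > 0$ where $\frac{d}{dt}J(tA_k) = 0$, i.e.
\[
t_k \norm{A_k}^2 = \int_\Om \frac{f(\xi, t_k A_k) A_k}{\rho(\xi)^a}\,d\xi.
\]
First I would argue by contradiction: suppose that for every $k$ the maximum level is at least $\bigl(\frac{4-a}{8}\bigr)\frac{A}{\al_0}$. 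This forces a lower bound on the maximizing value $t_k^2$ in terms of $\frac{A}{\al_0}$; specifically, since $F \geq 0$ by (H1)--(H2) we get $\frac{t_k^2}{2}\norm{A_k}^2 \geq \bigl(\frac{4-a}{8}\bigr)\frac{A}{\al_0}$, and combined with $\norm{A_k}\le 1$ this yields $t_k^2 \geq \frac{(4-a)A}{4\al_0}$ (up to the capacity correction).

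**Next I would** insert hypothesis (H5) into the critical estimate. Condition (H5) says $uf(\xi,u)\exp(-\al_0 |u|^2) \geq \beta_1 > \frac{(4-a)A}{4\al_0 R^{4-a}\mathcal{M}}$ for large $u$, so on the region $B_{R/k} \subseteq \{A_k \text{ is large}\}$ the nonlinearity satisfies $f(\xi, t_k A_k) t_k A_k \geq \beta_1 \exp(\al_0 t_k^2 A_k^2)$. Using the normalization \eqref{adam_func}, on the annulus $\frac{R}{k}\le |\xi| \le R$ one has $A_k(\xi) = \sqrt{\frac{Q\log k}{A}}\,U_{R/k}(\xi/R)$, so the exponent $\al_0 t_k^2 A_k^2$ grows like $\al_0 t_k^2 \frac{Q \log k}{A}|U|^2$. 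The critical point equation then forces, after substituting the lower bound for $t_k^2$, that $\al_0 t_k^2 \frac{Q}{A} \geq Q\log k$ would blow up the right-hand side faster than $t_k \norm{A_k}^2 \sim t_k \log k$ can accommodate, yielding a contradiction. This is where the constant $\mathcal{M} = \lim_k \int_{1/k \le |\xi|\le 1}\exp(Q\log k\, |U_{R/k}(\xi)|)\,d\xi$ enters: it is exactly the limit capturing the integral of the exponentiated test function over the concentration annulus, and the threshold $\beta_1 > \frac{(4-a)A}{4\al_0 R^{4-a}\mathcal{M}}$ is calibrated precisely so that the contradiction closes.

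**The hard part will be** making the two-sided control on $t_k$ quantitatively tight enough. On one side, the assumption that the max level stays above $\bigl(\frac{4-a}{8}\bigr)\frac{A}{\al_0}$ must be converted into a sharp lower bound on $t_k^2$; on the other, the critical-point identity together with the exponential lower bound from (H5) and the convergence of the annulus integral to $\mathcal{M}$ must produce an incompatible inequality. The delicate point is that both the capacity estimate $\norm{A_k}^2 \le 1 - o(1)$ (or the sharp value from the Adams-function construction in \cite{lak}) and the $\rho(\xi)^{-a}$ weight interact with the logarithmic scaling, so one must track how the singular weight $\rho(\xi)^{-a} \sim (R/k)^{-a}$ on $B_{R/k}$ contributes the factor $R^{4-a}$ appearing in the denominator of the (H5) threshold. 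I would carry out the estimate by splitting $\int_\Om \frac{F(\xi,tA_k)}{\rho(\xi)^a}$ over $B_{R/k}$ and the annulus separately, discarding the (nonnegative) inner contribution except where (H5) gives a gain, and then passing to the limit $k \to \infty$ so that the annulus integral converges to $\mathcal{M}$. Once these scalings are matched, the strict inequality in (H5) delivers the strict inequality in the claim for all sufficiently large $k$.
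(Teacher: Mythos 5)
Your proposal follows the paper's proof essentially step for step: the contradiction hypothesis, the critical-point identity at the maximizer $t_k$, the lower bound $t_k^2\geq\frac{(4-a)A}{4\al_0}$ coming from $F\geq 0$ and $\norm{A_k}\leq 1$, the insertion of (H5) to force boundedness of $t_k$ and then convergence of $t_k^2$ to that threshold, and the final contradiction obtained by splitting the integral over the concentration ball and the annulus and letting the annulus contribution converge to $R^{4-a}\mathcal{M}$, against which the calibration of $\beta_1$ closes the argument. The only slip is the intermediate comparison ``$t_k\norm{A_k}^2\sim t_k\log k$'' (the left side of the critical-point identity is $t_k^2\norm{A_k}^2\leq t_k^2$, and at that stage the exponential blow-up only yields boundedness of $t_k$ rather than an immediate contradiction), but this does not change the strategy, which is the same as the paper's.
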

\begin{proof}
 We shall prove this result by method of contradiction. Suppose that for all $k,$ we have
\be\label{max1}
\max\{J(tA_k): t\geq 0\}\geq \left(\frac{4-a}{8}\right)\frac{A }{\al_0}.
 \ee
 Therefore for all $k$ there exists a $t_k>0$ at which maximum is attained and
 \be\label{max2}
 J(t_kA_k)=\frac{t_k^2\norm{A_k}^2}{2} -\int_\Om \frac{F(\xi,t_kA_k)}{\rho(\xi)^a} dx \geq \left(\frac{4-a}{8}\right)\frac{A }{\al_0}
 \ee
 and
 \be\label{max3}
 t_k^2\norm{A_k}^2 =\int_\Om  \frac{t_kA_k f(\xi,t_kA_k)}{\rho(\xi)^a} d\xi.
\ee
 Since $F(\xi,s) \geq 0$ and $\norm{A_k}^2 \leq 1,$ therefore from \eqref{max2}, we  get
 \be\label{max4}
 t_k^2 \geq \left(\frac{4-a}{4}\right) \frac{A }{\al_0}.
 \ee
 Also for a given $\tau>0,$ there exists $R_\tau>0$ such that for all  $u\geq R_\tau,$  we have
 \be\label{max5}
 uf(\xi,u)\geq (\beta_1-\tau)\exp(\al_0|u|^{2}).
 \ee
 On using \eqref{max5} in \eqref{max3}, we get
 \begin{align}\no
  t_k^2 &\geq (\beta_1-\tau)\int_{B_{R/k}}\frac{\exp (\al_0 |t_kA_k|^{2})}{\rho(\xi)^a} dx\\ \no
  & =(\beta_1-\tau) \frac{w_{3}}{4-a}\left(\frac{R}{k}\right)^{4-a} \exp\left(\al_0t_k^{2}\left(\frac{4\log k}{A }\right)^{2}\right)\\ \no
  &= (\beta_1-\tau)\frac{w_{3} R^{4-a}}{4-a}\exp\left[\al_0t_k^{2}\left(\frac{4\log k}{A }\right)^{2}-(4-a)\log(k)\right]\\ \label{max6}
  1&\geq (\beta_1-\tau)\frac{w_3 R^{4-a}}{4-a}\exp\left[\al_0t_k^{2}\left(\frac{4\log k}{A }\right)^{2}-(4-a)\log(k)-2\log(t_k)\right].
\end{align}
\eqref{max6} shows that $\{t_k\}$ is a bounded sequence, otherwise up to a subsequence right hand side of \eqref{max6} tends to $\infty$ as $k\rar \infty.$ Also, we have
\be\label{max7}
t_k^2 \rar \left(\frac{4-a}{4}\right) \frac{A }{\al_0}\,\,\text{ as } k\rar\infty
\ee
and
\be\no
\norm{A_k} \rar 1\,\,\text{ as } k\rar\infty.
\ee
Also observe that, by definition of $A_k,$ as $k\rar \infty$, we have,
\be\no
A_k(\xi) \rar 0,\,\,\, \text{a.e. } \xi\in \Omega.
\ee
Let
\[X_k=\{\xi\in \Om : t_kA_k \geq R_\tau\}\]
and
\[Y_k=\Om\backslash X_k,\]
 then the characteristic function of $Y_k,$ $\chi_{Y_k}\rar 1, $ a.e. $\xi\in\Om.$ By Lebesgue dominated convergence theorem, we get
 \be\label{max8}
 \int_{Y_k} t_kA_k \frac{f(\xi,t_kA_k)}{\rho(\xi)^a} d\xi \rar 0
 \ee
 and
 \be\label{max9}
 \int_{Y_k} \frac{\exp (\al_0 |t_kA_k|^2)}{\rho(\xi)^a} dx \rar \frac{w_3R^{4-a}}{4-a}, \text{ as } k\rar \infty.
 \ee
 Since $\displaystyle t_k^2  \geq \frac{4-a}{4} \frac{A }{\alpha_0},$ therefore
 \begin{align}\no
  &\int_{B_R} \frac{\exp (\al_0 |t_kA_k|^2)}{\rho(\xi)^a} d\xi \geq \int_{B_R} \frac{\exp \left(\frac{4-a}{4}A  |A_k|^2\right)}{\rho(\xi)^a } d\xi \\ \no
  & = \int_{\|\xi\|\leq \frac{R}{k}} \frac{\exp \left(\frac{4-a}{4}A  |A_k|^2\right)}{\rho(\xi)^a } d\xi+\int_{\frac{R}{k}\leq \|\xi\| \leq R} \frac{\exp \left(\frac{4-a}{4}A  |A_k|^2\right)}{\rho(\xi)^a } d\xi\\ \no
  & = \int_{\|\xi\|\leq \frac{R}{k}} \frac{\exp \frac{4-a}{4}(A  |A_k|^2)}{\rho(\xi)^a } d\xi+ \int_{\frac{R}{k}\leq \|\xi\| \leq R} \frac{\exp \left(\frac{4-a}{4}A  |A_k|^2\right)}{\rho(\xi)^a } d\xi\\
  &=\frac{w_3R^{4-a}}{4-a} +R^{4-a} \mathcal{M}
 \end{align}
 Since
 \begin{gather*}
 t_k^2 \geq (\beta_1-\tau) \int_{\|\xi\|\leq R} \frac{\exp (\al_0 |t_kA_k|^2)}{\rho(\xi)^a } d\xi +\int_{Y_k} \frac{t_kA_k f(\xi,t_kA_k)}{\rho(\xi)^a } d\xi \\
 -(\beta_1-\tau)\int_{Y_k} \frac{\exp (\al_0|t_kA_k|^2)}{\rho(\xi)^a } d\xi ,
\end{gather*}
therefore
\[\frac{4-a}{4}\frac{A }{\al_0} \geq (\beta_1-\tau) R^{4-a}\mathcal{M}\]
or
\[\beta_1\leq \frac{A }{R^{4-a}\mathcal{M}\al_0}\frac{4-a}{4},\]
which is a contradiction to (H5).
This completes the proof.
\end{proof}

\begin{lemma}\label{psc1}
 Assume that $f$ satisfies critical exponential growth condition \eqref{crit1} and \eqref{crit2}. Let $\{u_k\} \subseteq D_0^{2,2} (\Om)$ be a Palais-Smale sequence. Then $\{u_k\}$ has a subsequence, still denoted by $\{u_k\},$ and $u\in D_0^{2,2} (\Om)$ such that
 \begin{enumerate}[(i)]
  \item $u_k \rightharpoonup u$ in $D_0^{2,2} (\Om)$

  \item $\displaystyle \frac{f(\xi,u_k)}{\rho(\xi)^a } \rar  \frac{f(\xi,u)}{\rho(\xi)^a }$ in $L^1(\Om).$
 \end{enumerate}
\end{lemma}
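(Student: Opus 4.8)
The plan is to run the standard compactness argument for critical Moser--Adams problems, using the Ambrosetti--Rabinowitz-type condition (H3) to produce a bounded sequence and then Vitali's convergence theorem (stated in Section~2) to upgrade almost everywhere convergence to $L^1$ convergence of the nonlinear term. Throughout, the fact that $a<4=Q$ makes $\rho(\xi)^{-a}\in L^1(\Om)$, which is the structural ingredient that keeps all weighted integrals under control.

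First I would show that any Palais--Smale sequence is bounded in $D_0^{2,2}(\Om)$. Writing $J(u_k)\rar c$ and $|\langle DJ(u_k),u_k\rangle|\leq \epsilon_k\norm{u_k}$ with $\epsilon_k\rar 0$, I form $\theta J(u_k)-\langle DJ(u_k),u_k\rangle=\left(\frac{\theta}{2}-1\right)\norm{u_k}^2+\int_\Om \frac{u_kf(\xi,u_k)-\theta F(\xi,u_k)}{\rho(\xi)^a}\,d\xi$. By (H3) the integrand is nonnegative on $\{|u_k|\geq R_0\}$, while on $\{|u_k|<R_0\}$ it is bounded and integrated against $\rho^{-a}\in L^1(\Om)$; since $\theta>2$, this gives a uniform bound on $\norm{u_k}$. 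Passing to a subsequence, $u_k\rightharpoonup u$ in $D_0^{2,2}(\Om)$, which is assertion (i). By the embedding Theorem~\ref{embedding} (the borderline case $k=Q/p=2$) the embedding of $D_0^{2,2}(\Om)$ into each $L^s(\Om)$, $s\geq 1$, is compact on the bounded domain $\Om$, so after a further subsequence $u_k\rar u$ in every $L^s(\Om)$ and $u_k\rar u$ a.e.\ in $\Om$; by continuity of $f$ this yields $\frac{f(\xi,u_k)}{\rho(\xi)^a}\rar \frac{f(\xi,u)}{\rho(\xi)^a}$ a.e.

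For (ii) the two ingredients are an $L^1$ bound on the nonlinearity and uniform integrability. Taking $v=u_k$ in \eqref{ps2} and using $\norm{u_k}=O(1)$ gives $\int_\Om \frac{u_kf(\xi,u_k)}{\rho(\xi)^a}\,d\xi=O(1)$; since $u_kf(\xi,u_k)\geq 0$ by (H1) and $|f(\xi,u_k)|\leq R_0^{-1}\,u_kf(\xi,u_k)$ on $\{|u_k|\geq R_0\}$, while $|f|$ is bounded on $\bar\Om\times[-R_0,R_0]$ and $\rho^{-a}\in L^1(\Om)$, I obtain $\int_\Om \frac{|f(\xi,u_k)|}{\rho(\xi)^a}\,d\xi\leq C$. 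The very same splitting gives uniform integrability: for $\epsilon>0$, choosing $M$ with $C/M<\epsilon/2$, one has $\int_{E\cap\{|u_k|>M\}}\frac{|f(\xi,u_k)|}{\rho^a}\leq \frac{1}{M}\int_\Om\frac{u_kf(\xi,u_k)}{\rho^a}<\epsilon/2$ uniformly in $k$ and $E$, while on $\{|u_k|\leq M\}$ one has $|f(\xi,u_k)|\leq C_M$ together with $\int_E \rho^{-a}\rar 0$ as $|E|\rar 0$, by absolute continuity of the integral of the $L^1$ weight $\rho^{-a}$. Hence $\{\frac{f(\xi,u_k)}{\rho^a}\}$ is uniformly integrable; combined with the a.e.\ convergence above and the finiteness of $\frac{f(\xi,u)}{\rho(\xi)^a}$ a.e., Vitali's convergence theorem yields $\frac{f(\xi,u_k)}{\rho(\xi)^a}\rar\frac{f(\xi,u)}{\rho(\xi)^a}$ in $L^1(\Om)$.

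The step I expect to be the main obstacle is the uniform-integrability estimate, where the critical exponential growth and the singular weight interact. One must simultaneously absorb the concentration of $f(\xi,u_k)$ on the set where $|u_k|$ is large---controlled only through the $L^1$ bound on $u_kf(\xi,u_k)/\rho^a$, i.e.\ through the extra factor $u_k$ furnished by the test-function identity---and control the singularity of $\rho^{-a}$ at the origin. This is precisely where the restriction $a<4$ is essential, and it explains why the argument is carried out in the subcritical-potential range rather than at $a=4$. Everything else in the proof is routine once this splitting is set up.
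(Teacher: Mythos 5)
Your proof is correct and takes essentially the same route as the paper: boundedness of the Palais--Smale sequence via the combination $\theta J(u_k)-\langle DJ(u_k),u_k\rangle$ together with (H3), then weak and a.e.\ convergence, then $L^1$ convergence of $f(\xi,u_k)/\rho(\xi)^a$ from the uniform bound on $\int_\Om u_kf(\xi,u_k)/\rho(\xi)^a\,d\xi$. The only difference is that the paper delegates part (ii) to the argument of Lemma 2.1 of \cite{fig}, whereas you spell that argument out (the splitting at level $M$, uniform integrability, and Vitali's theorem), which is exactly what that citation hides.
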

\begin{proof}
 Let $\{u_k\}$ be a Palais-Smale sequence, then
 \be\label{ps11}
 J(u_k)=\frac{1}{2} \norm{u_k}^2-\int_\Om F(\xi,u_k) d\xi \rar c,\,\ \text{as } k\rar \infty
 \ee
 and
 \be\label{ps22}
 |J^\prime (u_k)v| =\left|\int_\Om \Delta_{\mathbb{H}} u_k \Delta_{\mathbb{H}} v d\xi -\int_\Om f(\xi,u_k) v d\xi \right|\leq \tau_k \norm{v}.
 \ee
 Also by Lemma \ref{maxlem},
 \[c< \frac{4-a}{8}\frac{A }{\alpha_0}.\]
 From \eqref{ps11} and \eqref{ps22}, we get
 \begin{align}\no
 C+\tau_n\norm{u_k}&\geq \left(\frac{\theta}{2}-1\right)\norm{u_k}^2-\int_\Om \frac{(\theta F(\xi,u_k)-f(\xi,u_k)u_k)}{\rho(\xi)^a }d\xi \\ \label{ps33}
 & \geq \left(\frac{\theta}{2}-1\right)\norm{u_k}^2,
 \end{align}
 which implies that
 \be\label{ps44}
 \begin{cases}
 \displaystyle\norm{u_k} \leq  C,\\
  \displaystyle\int_\Om \frac{f(\xi,u_k)u_k}{\rho(\xi)^a }d\xi  \leq  C,\\
  \displaystyle \int_\Om \frac{F(\xi,u_k)}{\rho(\xi)^a } d\xi  \leq  C.
 \end{cases}
 \ee
Since $D_0^{2,2} (\Om)$ is a reflexive Banach space, therefore by \eqref{ps44}, up to a subsequence
\[\begin{cases}
 \displaystyle u_k \rightharpoonup u \text{ in } D_0^{2,2} (\Om),\\
 u_k\longrightarrow u \text{ in } L^q(\Om),\, \forall\,  1\leq q<\infty,\\
 u_k(\xi)\longrightarrow u(\xi), \text{ a.e.  } \xi\in \Om.
\end{cases} \]
Furthermore, using the  arguments similar to  Lemma 2.1 \cite{fig}, we get
\be
\frac{f(\xi,u_n)}{\rho(\xi)^a } \rar \frac{f(\xi,u)}{\rho(\xi)^a } \text{ in }  L^1(\Om).
\ee
This completes the proof.
\end{proof}
\subsection{Proof of Theorem\,\ref{mainthm_crit}}
By Lemmas \ref{geo1}, \ref{geo2}, we can find  a Palais-Smale sequence $\{u_k\}$ at the level $c$ and by Lemma \ref{maxlem},
$\displaystyle 0<c<\frac{4-a}{8}\frac{A }{\al_0}.$ Thus, we have
\be\label{f0}
J(u_k)=\frac{1}{2}\norm{u_k}^2-\int_\Omega F(x,u_k)d\xi \longrightarrow c
\ee
and
\be\lab{f1}
|J^\prime(u_k)v|=\left|\int_\Omega \Delta_{\mathbb{H}} u_k\Delta_{\mathbb{H}} vdx-\int_\Omega \frac{f(\xi,u_k)v}{\rho(\xi)^a }d\xi\right|\leq \epsilon_k\norm{v}.
\ee
By Lemma \ref{psc1}, there exists $u\in D_0^{2,2} (\Omega)$ such that
\begin{enumerate}[(i)]
\item $u_k \rightharpoonup u$ in $D_0^{2,2} (\Omega).$
\item $\frac{f(\xi,u_k)}{\rho(\xi)^a } \rar \frac{f(\xi,u)}{\rho(\xi)^a }$ strongly in $L^1(\Omega)$.
\end{enumerate}
Therefore by \eqref{f1},  with the aid of Lebesgue dominated convergence theorem, one can pass the limit and get
\[J^\prime(u) v=0\] for all $v\in C_c^\infty(\Omega).$ Since $C_c^\infty(\Omega)$ is dense in $D_0^{2,2} (\Omega),$ therefore $u$ is a weak solution to \eqref{main_prob}.

Now, we show that $u$ is non trivial. On the contrary, let if possible $u\equiv 0,$ then by (H2) and Lebesgue dominated convergence theorem,
\be
\int_\Om \frac{F(\xi,u_k)}{\rho(\xi)^a } d\xi \rar 0 \text{ in } L^1(\Omega) \text{ as }k\rar\infty.
\ee
From \eqref{f0}, we get
\be
\norm{u_k}^2 \rar 2c<\frac{4-a}{4}\frac{A }{\al_0}.
\ee
Choose $q>1,$ sufficiently close to $1$ such that
\[\frac{4}{4-a}q\alpha_0\norm{u_k}^{2} <A \]
for $k$ large.
Now, since $f$ has critical exponential growth, therefore by Theorem \ref{sing_adams},
\begin{align*}
\int_\Om \frac{|f(\xi,u_k)}{\rho(\xi)^a } d\xi &\leq C\int_\Om \exp\left(q\alpha_0\norm{u_k}^{2} \left|\frac{u_k}{\norm{u_k}}\right|^{2}\right) d\xi \\
&\leq O(1),\,\,\,\text{ as } k\rar \infty.
\end{align*}
Thus, by taking $v=u_k$ in \eqref{f0}, we obtain
\[\norm{u_k}^2 \rar 0\text{ as } k\rar \infty,\]
which is a contradiction. This completes the proof.
\qed
\subsection{The critical potential case $a=4$}
In this section, we consider the borderline problem with respect to potential, i.e.,  $a=4$
\begin{equation}\label{critical_prob}
\left.
  \begin{array}{ll}
    \Delta_{\mathbb{H}}^2u=\displaystyle\frac{f(\xi,u)}{\rho(\xi)^4}  \,\,\,\, \mbox{in } \Omega, \\ \\
     u|_{\pa\Om}=0=\displaystyle\left.\frac{\partial u}{\partial n}\right|_{\pa\Om},
  \end{array}
\right.
\end{equation}
where $0\in\Om\subseteq \H,$ is a bounded domain and $f$ satisfies the exponential growth condition at subcritical and critical level.
This case is delicate in the sense that Theorems \ref{mainthm_sub} and \ref{mainthm_crit} fail when $a=4.$

In order to establish the existence of solution to the problem \eqref{critical_prob}, we consider the approximate problem which has subcritical potential
\begin{equation}\label{approx_prob}
\left.
  \begin{array}{ll}
    \Delta_{\mathbb{H}}^2u_n=\displaystyle\frac{f(\xi,u_n)}{\rho(\xi)^{4-\frac{1}{n}}} \,\,\,\, \mbox{in } \Omega, \\
         u_n|_{\pa\Om}=0=\displaystyle\left.\frac{\partial u_n}{\partial n}\right|_{\pa\Om},
  \end{array}
\right.
\end{equation}
The solutions to \eqref{approx_prob} are the critical points of the functional
\[J_n: D_0^{2,2} (\Omega) \rar \R\] defined as
\be\label{funca}
J_n(u_n)=\frac{1}{2} \int_\Om |\Delta_{\mathbb{H}} u_n|^2 d\xi -\int_\Om \frac{F(\xi,u_n)}{\rho(\xi)^{4-\frac{1}{n}}} d\xi ,
\ee
where $\displaystyle F(\xi,u_n)=\int_0^{u_n} f(\xi,s) ds.$
\begin{lemma}\lab{geoa1}
  Suppose (H1)-(H4) hold. Then there exists $\rho>0 $ such that
 \[J_n(u_n)>0,\, \, \text{if } \norm{u_n}=\rho.\]
\end{lemma}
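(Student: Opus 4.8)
The plan is to prove Lemma~\ref{geoa1} by mimicking the proof of Lemma~\ref{geo1} almost verbatim, the only change being that the singular potential $\rho(\xi)^{-a}$ with $a<4$ is replaced by $\rho(\xi)^{-(4-\frac{1}{n})}$, which for each fixed $n$ is still a subcritical exponent $a_n := 4-\frac{1}{n}<4$. The key observation is that all the estimates in Lemma~\ref{geo1} are uniform in the exponent $a$ as long as $a$ stays strictly below $4$, so they apply to $a_n$ for every $n$; moreover, by building in the factor $\left(1-\frac{a_n}{4}\right)\to 0$ carefully, the constants can even be made uniform in $n$. First I would invoke (H4) exactly as before to obtain, via the strict inequality $\limsup_{s\to0^+}\frac{2F(\xi,s)}{|s|^2}<\Lambda$, constants $\tau>0$ and $\delta>0$ such that $F(\xi,s)<\frac{1}{2}(\Lambda-\tau)|s|^2$ for $|s|<\delta$, and then use the subcritical growth \eqref{subcrit} to get, for $q>2$, the bound
\[
F(\xi,s)\le \frac{1}{2}(\Lambda-\tau)|s|^2+K\exp(\gamma|s|^{2})|s|^q,\qquad\forall\,\xi\in\Om,\ s\in\R .
\]

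Next I would substitute this into $J_n$ and control the exponential term. The central step is a H\"older splitting: choosing $r>1$ close to $1$ so that $a_n r<4$, I would write
\begin{align*}
\int_\Om \frac{\exp(\gamma|u_n|^{2})|u_n|^q}{\rho(\xi)^{a_n}}\,d\xi
&\le\left(\int_\Om \frac{\exp\!\left(\gamma r\norm{u_n}^{2}\left(\frac{|u_n|}{\norm{u_n}}\right)^{2}\right)}{\rho(\xi)^{a_n r}}\right)^{\frac{1}{r}}
\left(\int_\Om |u_n|^{qr'}\,d\xi\right)^{\frac{1}{r'}},
\end{align*}
and then restrict to $\norm{u_n}\le\sigma$ with $\sigma$ chosen so that $\gamma r\sigma^2<A\left(1-\frac{a_n}{4}\right)$. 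Since $a_n<4$, Theorem~\ref{sing_adams} (applied with $a=a_n$) makes the first factor bounded by a constant $C$, giving
\[
J_n(u_n)\ge\frac{1}{2}\norm{u_n}^2-\frac{\Lambda-\tau}{2}\int_\Om\frac{|u_n|^2}{\rho(\xi)^{a_n}}\,d\xi
-C\left(\int_\Om|u_n|^{r'q}\right)^{\frac{1}{r'}}.
\]
Applying the definition \eqref{gam} of $\Lambda$ in the form $\int_\Om\frac{|u_n|^2}{\rho(\xi)^{a_n}}\le\frac{1}{\Lambda}\norm{u_n}^2$ and the continuous embedding $D_0^{2,2}(\Om)\hookrightarrow L^{r'q}(\Om)$ from Theorem~\ref{embedding}, I obtain
\[
J_n(u_n)\ge\frac{\tau}{2\Lambda}\norm{u_n}^2-C\norm{u_n}^q .
\]
Since $\tau>0$ and $q>2$, choosing $\rho>0$ small with $\frac{\tau}{2\Lambda}\rho-C\rho^{q-1}>0$ yields $J_n(u_n)>0$ whenever $\norm{u_n}=\rho$, as desired.

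The only subtlety, and the part deserving care rather than difficulty, is ensuring that $\Lambda$ in \eqref{gam} is defined with the weight $\rho(\xi)^{-a}$ for the \emph{original} exponent while the approximate functional uses $\rho(\xi)^{-a_n}$; since $a=4$ is the target, I would either re-read the hypothesis (H4) relative to the approximating weight $\rho(\xi)^{-a_n}$ or, more cleanly, note that for $\xi$ in the bounded domain $\Om$ one has $\rho(\xi)^{-a_n}\le C\rho(\xi)^{-a}$ up to a bounded factor on the region $\rho(\xi)\le\mathrm{diam}(\Om)$, so the same $\Lambda$ works after adjusting constants. The main obstacle, such as it is, is not analytic but bookkeeping: verifying that the exponent condition $\gamma r\sigma^2<A\left(1-\frac{a_n}{4}\right)$ can be met for all $n$ by a single $\sigma>0$. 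Because $1-\frac{a_n}{4}=\frac{1}{4n}\to0$, the admissible $\sigma$ shrinks as $n\to\infty$; however, since Lemma~\ref{geoa1} is stated for each fixed $n$, it suffices to pick $\sigma$ (and hence $\rho$) depending on $n$, so no uniformity is required here and the argument goes through for each $n$ separately. I would close by remarking that the proof is identical to that of Lemma~\ref{geo1} with $a$ replaced by $4-\frac{1}{n}$.
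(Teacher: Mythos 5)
Your proposal is correct and follows exactly the route the paper intends: the paper's own ``proof'' of Lemma~\ref{geoa1} consists of the single remark that it ``has the similar lines as the proof of Lemma~\ref{geo1},'' and you carry out precisely that adaptation with $a$ replaced by $a_n=4-\frac{1}{n}$, including the H\"older splitting and the appeal to Theorem~\ref{sing_adams} with the subcritical exponent $a_n$. Your closing observations --- that $\rho$ may depend on $n$ and that the weight in $\Lambda$ needs to be read consistently with $\rho(\xi)^{-a_n}$ --- are sound and in fact supply more care than the paper itself does.
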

\begin{proof}
The proof has the similar lines as the proof of Lemma \ref{geo1}, for the sake of brevity, we omit the details.
\end{proof}
\begin{lemma}\label{geoa2}
 There exists $e_n\in D_0^{2,2} (\Omega)$ with $\norm{e_n} > \rho $ such that
 \[J_n(e_n) < \int_{\norm{u_n}=\rho} J_n(u_n).\]
\end{lemma}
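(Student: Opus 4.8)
The plan is to follow the same strategy as in the proof of Lemma \ref{geo2}, observing that the only difference between $J_n$ and $J$ is that the singular potential in \eqref{funca} carries the exponent $4-\frac{1}{n}$ rather than $a$. Since $4-\frac{1}{n}<4=Q$, this exponent is \emph{subcritical}, and this is precisely what keeps all the relevant singular integrals finite.

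First I would extract the superquadraticity encoded in (H2) and (H3). Exactly as in Lemma \ref{geo2}, these conditions yield constants $c>0$, $d>0$ and $\theta>2$ such that
\[
F(\xi,s)\geq c\,s^\theta-d,\qquad \forall\,(\xi,s)\in\Om\times\R^+.
\]
Fixing a nonnegative $0\neq u\in D_0^{2,2}(\Om)$ and a parameter $t>0$, I would insert this lower bound into the definition \eqref{funca} of $J_n$ to obtain
\[
J_n(tu)\leq \frac{t^2}{2}\int_\Om|\De_{\H}u|^2\,d\xi
-c\,t^\theta\int_\Om\frac{u^\theta}{\rho(\xi)^{4-\frac{1}{n}}}\,d\xi
+d\int_\Om\frac{1}{\rho(\xi)^{4-\frac{1}{n}}}\,d\xi.
\]

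The crucial observation is that both singular integrals on the right are finite. Passing to polar coordinates on $\H$ (Proposition 1.15 of \cite{folland_stein}), with $Q=4$ so that the volume element carries $r^{Q-1}=r^3$, the weight behaves near the origin like $\int_0^\epsilon r^{-(4-\frac{1}{n})}\,r^{3}\,dr=\int_0^\epsilon r^{-1+\frac{1}{n}}\,dr<\infty$, since $\frac{1}{n}>0$. This is exactly the step where the strict inequality $4-\frac{1}{n}<Q$ is used, and it is also the structural reason the borderline case $a=4$ had to be treated via the approximation \eqref{approx_prob}. Choosing any nontrivial nonnegative $u$ so that $\int_\Om u^\theta\rho(\xi)^{-(4-\frac{1}{n})}\,d\xi>0$, the dominant term as $t\to\infty$ is $-c\,t^\theta$, and since $\theta>2$ we conclude $J_n(tu)\to-\infty$. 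Setting $e_n=tu$ with $t$ sufficiently large then yields $\norm{e_n}>\rho$ together with $J_n(e_n)<\inf_{\norm{u_n}=\rho}J_n(u_n)$, as required.

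I do not anticipate any serious obstacle: the argument is a routine adaptation of Lemma \ref{geo2}, which is why the paper states that the details may be omitted. The single point demanding care is the local integrability of the singular weight $\rho^{-(4-\frac{1}{n})}$, which rests entirely on the subcriticality $4-\frac{1}{n}<Q$; this is the same feature that motivates the whole approximation scheme.
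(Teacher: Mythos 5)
Your proposal is correct and follows essentially the same route as the paper, which simply refers back to the superquadraticity bound $F(\xi,s)\geq c s^\theta-d$ and the resulting estimate $J_n(tu)\to-\infty$ as $t\to\infty$ from Lemma \ref{geo2}. The only addition is your explicit check, via polar coordinates, that $\rho(\xi)^{-(4-\frac{1}{n})}$ is locally integrable because $4-\frac{1}{n}<Q=4$; the paper leaves this implicit, but it is a worthwhile detail to record since it is exactly what makes the approximating functionals $J_n$ well defined.
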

\begin{proof}
 The proof has similar lines as the proof of Lemma \ref{geo2} and therefore we omit the details for the sake of brevity.
\end{proof}

\begin{lemma}\label{psca}
 The functional $J_n$ satisfies Palais-Smale condition at level $c,$ for all $c\in\R.$
 \end{lemma}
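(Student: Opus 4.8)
The plan is to follow the scheme of Lemma \ref{psc}, the key observation being that for each \emph{fixed} $n$ the potential exponent $a_n := 4 - \frac{1}{n}$ is strictly subcritical, i.e. $0 \le a_n < 4$, so the singular Adams inequality of Theorem \ref{sing_adams} is available with $a = a_n$ and with a strictly positive margin $A\left(1 - \frac{a_n}{4}\right) = \frac{A}{4n} > 0$. Let $\{u_k\} \subseteq D_0^{2,2}(\Om)$ be a Palais--Smale sequence for the fixed functional $J_n$ at level $c$, so that $J_n(u_k) \to c$ and $J_n'(u_k) \to 0$ in the dual. First I would establish boundedness of $\{u_k\}$: forming $\theta J_n(u_k) - \langle J_n'(u_k), u_k\rangle$ and invoking the Ambrosetti--Rabinowitz type condition (H3), which gives $uf(\xi,u) - \theta F(\xi,u) \ge 0$ for $|u| \ge R_0$, one obtains
\[
\left(\frac{\theta}{2} - 1\right)\norm{u_k}^2 \le O(1) + \epsilon_k \norm{u_k},
\]
and since $\theta > 2$ this forces $\{u_k\}$ to be bounded in $D_0^{2,2}(\Om)$.

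Having boundedness, I would pass to a subsequence with $u_k \rightharpoonup u$ in $D_0^{2,2}(\Om)$, $u_k \to u$ in $L^q(\Om)$ for every $q \ge 1$ by the compact Sobolev embedding on bounded domains, and $u_k \to u$ a.e.\ in $\Om$. The heart of the proof is then to show
\[
\int_\Om \frac{f(\xi,u_k)}{\rho(\xi)^{a_n}}(u_k - u)\, d\xi \to 0 \quad \text{and} \quad \int_\Om \frac{f(\xi,u)}{\rho(\xi)^{a_n}}(u_k - u)\, d\xi \to 0.
\]
For the first I would use the subcritical growth \eqref{subcrit}: for any $\al > 0$ there is $C_\al$ with $|f(\xi,s)| \le C_\al \exp(\al s^2)$. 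Choosing first $r > 1$ close enough to $1$ that $r a_n < 4$ (possible because $a_n < 4$), and then $\al > 0$ small enough that $r\al M^2 < A\left(1 - \frac{r a_n}{4}\right)$, where $M := \sup_k \norm{u_k} < \infty$, Hölder's inequality with exponents $r, r'$ gives
\[
\left|\int_\Om \frac{f(\xi,u_k)}{\rho(\xi)^{a_n}}(u_k - u)\, d\xi\right| \le C\left(\int_\Om \frac{\exp\!\left(r\al \norm{u_k}^2 \left|\frac{u_k}{\norm{u_k}}\right|^2\right)}{\rho(\xi)^{r a_n}}\, d\xi\right)^{\frac{1}{r}} \norm{u_k - u}_{r'}.
\]
The first factor is bounded by Theorem \ref{sing_adams}, applied with $a = r a_n$ to the normalized function $u_k/\norm{u_k}$, which satisfies $\norm{\Delta_{\mathbb{H}}(u_k/\norm{u_k})}_2 = 1$, while $\norm{u_k - u}_{r'} \to 0$ by the compactness of the embedding; hence the whole expression tends to $0$. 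The second integral is handled identically.

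Finally, using $\langle J_n'(u_k) - J_n'(u), u_k - u\rangle \to 0$ (valid because $J_n'(u_k) \to 0$, the sequence $\{u_k - u\}$ is bounded, and $u_k \rightharpoonup u$), I would write
\[
\norm{u_k - u}^2 = \langle J_n'(u_k) - J_n'(u),\, u_k - u\rangle + \int_\Om \frac{(f(\xi,u_k) - f(\xi,u))(u_k - u)}{\rho(\xi)^{a_n}}\, d\xi,
\]
and since every term on the right tends to $0$ by the previous step, one concludes $u_k \to u$ strongly in $D_0^{2,2}(\Om)$, which is exactly the Palais--Smale condition at level $c$. I expect the delicate point to be the uniform control of the exponential integral along the sequence: one must exploit \emph{both} the strict subcriticality $a_n < 4$ (to find an admissible $r > 1$ with $r a_n < 4$) and the uniform bound $\norm{u_k} \le M$ (to keep $r\al\norm{u_k}^2$ below the sharp constant $A\left(1 - \frac{r a_n}{4}\right)$), so that Theorem \ref{sing_adams} applies uniformly in $k$. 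Because $n$ is fixed this margin is strictly positive; the genuinely hard borderline $a = 4$ is precisely what the approximation scheme postpones to the later limit $n \to \infty$.
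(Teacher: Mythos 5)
Your proposal is correct and follows essentially the same route as the paper: boundedness via the Ambrosetti--Rabinowitz condition (H3), compact embeddings to extract a subsequence, H\"older with $r>1$ chosen so that $r\left(4-\frac{1}{n}\right)<4$ combined with the singular Adams inequality (Theorem \ref{sing_adams}) and the subcritical growth of $f$ to kill the nonlinear terms, and finally the identity for $\langle J_n'(u_k)-J_n'(u),u_k-u\rangle$ to upgrade to strong convergence. If anything, your statement of the admissibility condition $r a_n<4$ is the correct one, whereas the paper's proof contains a typo at that point (it writes $\left(4-\frac{1}{n}\right)r>4$).
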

 \begin{proof}
 Let $\{u_n^{(m)} \}\subseteq D_0^{2,2} (\Om)$ ba a (PS) sequence at level c, that is,
 \be\label{psa1}
 J_n(u_n^{(m)})=\frac{1}{2}\norm{u_n^{(m)}}^2-\frac{F(\xi,u_n^{(m)})}{\rho(\xi)^{4-\frac{1}{n}}} d\xi \rar c,\,\, \text{as }m\rar \infty
 \ee
 and
 \be\label{psa2}
 |DJ_n(u_n^{(m)}) v| =\left|\int_\Om \Delta_{\mathbb{H}} u_n^{(m)}\Delta_{\mathbb{H}} v d\xi-\int_\Om \frac{f(\xi,u_n^{(m)})v}{\rho(\xi)^{4-\frac{1}{n}}}\right| \leq \epsilon_m \norm{v},
 \ee
 where $0<\epsilon_m<1$ and $\epsilon_m\rar 0$ as $m\rar \infty.$
 On taking $v=u_n^{(m)}$ in \eqref{psa2}, we get
 \be\label{psa3}
 |DJ_n(u_n^{(m)}) u_n^{(m)}| =\left|\int_\Om |\Delta_{\mathbb{H}} u_n^{(m)}|^2d\xi-\int_\Om \frac{f(\xi,u_n^{(m)})u_n^{(m)}}{\rho(\xi)^{4-\frac{1}{n}}}\right| \leq \epsilon_m \norm{u_n^{(m)}},
 \ee
 On multiplying \eqref{psa1} with $\theta$ and subtracting \eqref{psa3} from it, we get
 \be\label{psa4}
 \left(\frac{\theta}{2}-1\right)\norm{u_n^{(m)}}^2+\int_\Omega \frac{1}{\rho(\xi)^{4-\frac{1}{n}}}(f(\xi,u_n^{(m)})u_n^{(m)}-\theta F(\xi,u_n^{(m)})) d\xi \leq O(1)+\epsilon_m \norm{u_n^{(m)}}.
 \ee
 By (H6), there exist $R_0>0$ and $\theta>2$ such that, for $\norm{u_n}\geq R_0,$
 \be\label{psa5}
 \theta F(\xi,u_n) \leq u_nf(\xi,u_n).
 \ee
 On using \eqref{psa5}, in \eqref{psa4}, we get
\be\label{psa6}
\left(\frac{\theta}{2}-1\right)\norm{u_n^{(m)}}^2 \leq O(1) +\epsilon_m \norm{u_n^{(m)}}.
\ee
Since $\theta>2,$ \eqref{psa6} shows that $\{u_n^{(m)}\}$ is bounded for each fixed $n\in\mathbb{N}$, that is, $\norm{u_n^{(m)}}\leq K_n,$ for some $K_n>0$ and therefore, up to a subsequence, we have
\Bea
u_n^{(m)} &\rightharpoonup & w_n\,\,\,\text{in } D_0^{2,2} (\Om) \text{ as } m\rar\infty.\\
u_n^{(m)} &\longrightarrow & w_n \,\,\,\text{in } L^p(\Om), \text{ as } m\rar\infty \, \text{ for all } p\geq 1. \\
u_n^{(m)}(\xi) &\longrightarrow & w_n(\xi)\,\,\text{a.e. in }\Om,  \text{ as }\,m\rar\infty.
\Eea
Since $f$ has subcritical growth on $\Om,$ therefore there exists a constant $C_{K_n}>0$ such that
\be\label{psa7}
f(\xi,s) \leq C_{K_n} \exp\left(\frac{\beta_n}{2K_n^2}|s|^{2}\right),\,\,\,\forall\, (\xi,s)\in \Om\times \R,
\ee
where $\beta_n=A\left(4-a+\frac{1}{n}\right).$
Thus
\begin{align}\no
&\left|\int_\Om \frac{f(\xi,u_n^{(m)})}{\rho(\xi)^{4-\frac{1}{n}}}(u_n^{(m)}-w_n) d\xi\right|\leq \int_\Om \frac{|f(\xi,u_n^{(m)})|}{\rho(\xi)^{4-\frac{1}{n}}}|(u_n^{(m)}-w_n)| d\xi \\ \no
& \leq \int_\Om C_{K_n} \frac{\exp\left(\frac{\beta_n}{2K_n^2}|u_n^{(m)}|^2\right)}{\rho(\xi)^{4-\frac{1}{n}}}|u_n^{(m)}-w_n| d\xi\\ \no
& \leq C\left(\int_\Om \frac{\exp\left(\frac{r\beta_n \norm{u_n^{(m)}}^{2}}{K_n^2}\frac{|u_n^{(m)}|^{2}}{\norm{u_n^{(m)}}^{2}}\right)}{\rho(\xi)^{{(4-\frac{1}{n})}r}}\right)^{\frac{1}{r}}\left(\int_\Om |u_n^{(m)}-w_n|^{r^\prime}\right)^{\frac{1}{r^\prime}}\\ \no
&\,\,\left(\text{where $r>1$ and such that $\left(4-\frac{1}{n}\right)r>4$  }\right.\\
&\left. \text{ and }\frac{1}{r}+\frac{1}{r^\prime}=1\right) \nonumber \\
& \leq C \norm{u_n^{(m)}-w_n}_{r^\prime}\nonumber \\
& \rar 0 \text{ as } m\rar \infty.
\end{align}
Similarly, we can show that
\be
\int_\Om \frac{f(\xi,u_n^{(m)})}{\rho(\xi)^{4-\frac{1}{n}}}(u_n^{(m)}-w_n)d\xi\rar 0  \,\,\,\text{as } {m\rar \infty}.
\ee
Also, we have
\[\langle DJ(u_n^{(m)})-DJ(w_n), u_n^{(m)}-w_n \rangle \rar 0, \text{ as } m\rar \infty. \]
Thus $u_n^{(m)}\rar w_n$ in $D_0^{2,2} (\Om).$ This completes the proof.
\end{proof}
\subsection{Proof of Theorem\,\ref{mainthmapprox_sub}}

Lemmas \ref{geoa1}, \ref{geoa2} show that the functional $J_n$ satisfies the geometric conditions required in mountain pass theorem. Lemma \ref{psca} shows that $J_n$
satisfies Palais-Smale condition and therefore
by mountain pass theorem, we conclude that Problem \eqref{approx_prob} has a weak solution $u_n,$ for each $n,$ that is,
\be\lab{weak_soln}
\int_\Omega \Delta_{\mathbb{H}} u_n\Delta_{\mathbb{H}} v d\xi =\int_\Om \frac{f(\xi,u_n)}{\rho(\xi)^{4-\frac{1}{n}}}vdx,\,\,\,\text{for all $v\in D_0^{2,2} (\Omega).$}
\ee
Since $0<\epsilon_m<1$ therefore from Equation \eqref{psa6}, we have $\norm{u_n}\leq C,$ for some constant $C$ independent of $n.$ Since $D_0^{2,2}(\Om)$ is reflexive Banach space therefore, up to a subsequence
\Bea
u_n &\rar & u_0\,\,\,\text{in } D_0^{2,2} (\Om)\\
u_n &\rar & u_0 \,\,\,\text{in } L^p(\Om), \, \forall p\geq 1\\
u_n(\xi) &\rar & u_0(\xi)\,\,\text{a.e. in }\Om.
\Eea
From \eqref{psa6} and the arguments used in Lemma \ref{psc1}, we also have the following
\be
\int_\Omega \frac{f(\xi,u_n)u_n}{\rho(\xi)^{4-\frac{1}{n}}} d\xi \leq C
\ee
and
\be
\int_\Omega \frac{F(\xi,u_n)}{\rho(\xi)^{4-\frac{1}{n}}} d\xi \leq C.
\ee
Observe that
\be\lab{conv}
\frac{f(\xi,u_n)}{\rho(\xi)^{4-\frac{1}{n}}} \rar \frac{f(\xi,u_0)}{\rho(\xi)^4}, \text{ a.e. in }\Omega .
\ee
Using \eqref{conv} and Vitali's convergence theorem in \eqref{weak_soln}, we get that $u_0$ is a weak solution of \eqref{critical_prob}.
This completes the proof in the subcritical case.
\qed

Now, we establish the existence of solution to \eqref{critical_prob}, when $f$ satisfies critical exponential growth condition \eqref{crit1} and \eqref{crit2}.

\subsection{Proof of Theorem\,\ref{mainthmapprox_crit}}
Since for each $n\in \mathbb{N},$ $4-\frac{1}{n}<4,$ therefore by Theorem \ref{mainthm_crit}, \eqref{approx_prob} has a weak solution $u_n.$ Moreover, since  $0<\epsilon_m<1$ therefore by \eqref{ps44},
there exists $C>0$ independent of $n$ such that
$\norm{u_n}\leq C,$
therefore, up to a subsequence
\Bea
u_n &\rar & u_0\,\,\,\text{in } D_0^{2,2}  (\Om).\\
u_n &\rar & u_0 \,\,\,\text{in } L^p(\Om), \, \forall p\geq 1.\\
u_n(\xi) &\rar & u_0(\xi)\,\,\text{a.e. in }\Om.
\Eea
From \eqref{psa6} and the arguments used in Lemma \ref{psc1}, we also have the following
\be
\int_\Omega \frac{f(\xi,u_n)u_n}{\rho(\xi)^{4-\frac{1}{n}}} d\xi \leq C
\ee
and
\be
\int_\Omega \frac{F(\xi,u_n)}{\rho(\xi)^{4-\frac{1}{n}}} d\xi \leq C.
\ee
Observe that
\be\lab{conv1}
\frac{f(\xi,u_n)}{\rho(\xi)^{4-\frac{1}{n}}} \rar \frac{f(\xi,u_0)}{\rho(\xi)^4}, \text{ a.e. in }\Omega .
\ee
Using \eqref{conv1} and Vitali's convergence theorem in \eqref{weak_soln}, we get that $u_0$ is a weak solution of \eqref{critical_prob}.
This completes the proof in the critical case. \qed
\section*{Acknowledgement}
Authors thank the referee for constructive comments.

\end{document}